\newcommand{\abs}[1]{\left|{#1}\right|}
\newcommand{\norm}[1]{\|{#1}\|}
\renewcommand{\emptyset}{\mbox{\textup{\O}}}
\newcommand{\br}[1]{{\left({#1}\right)}}
\newcommand{\bbr}[1]{{\left\{{#1}\right\}}}
\newcommand{\normH}[1]{{\vert\kern-0.25ex\vert\kern-0.25ex\vert #1 
		\vert\kern-0.25ex\vert\kern-0.25ex\vert}}
\newcommand{\normHB}[1]{{\Big\vert\kern-0.25ex\Big\vert\kern-0.25ex\Big\vert #1 
		\Big\vert\kern-0.25ex\Big\vert\kern-0.25ex\Big\vert}}
\def\Xint#1{\mathchoice
    {\XXint\displaystyle\textstyle{#1}}%
    {\XXint\textstyle\scriptstyle{#1}}%
    {\XXint\scriptstyle\scriptscriptstyle{#1}}%
    {\XXint\scriptscriptstyle\scriptscriptstyle{#1}}%
    \!\int}
    \def\XXint#1#2#3{{\setbox0=\hbox{$#1{#2#3}{\int}$}
    \vcenter{\hbox{$#2#3$}}\kern-.5\wd0}}
    \def\fint{\Xint-}
  \def\Xint#1{\mathchoice
    {\XXint\displaystyle\textstyle{#1}}%
    {\XXint\textstyle\scriptstyle{#1}}%
    {\XXint\scriptstyle\scriptscriptstyle{#1}}%
    {\XXint\scriptscriptstyle\scriptscriptstyle{#1}}%
    \!\int}
    \def\XXint#1#2#3{{\setbox0=\hbox{$#1{#2#3}{\int}$}
    \vcenter{\hbox{$#2#3$}}\kern-.5\wd0}}
    \def\dashint{\Xint-}
\def \A{ \mathbb{A} }
\def \L{ \mathbb{L} }
\def \N{ \mathbb{N} }
\def \R{ \mathbb{R} }
\def \Ncal { \mathcal{N} }
\def \Mcal { \mathcal{M} }
\def \Scal{ \mathcal{S} }
\def\Wcal{\mathcal{W} }
\def \hh{ \mathrm{H} }
\def \pp{ \mathrm{P} }
\def \Grm{ \mathrm{G} }
\newcommand{\wt}{\widetilde}
\newcommand{\wh}{\widehat}
\newcommand{\divv}{{\text{{\rm div}}}}
\renewcommand{\Re}{{\rm Re}\,}
\def\esssup{\mathop\mathrm{\,ess\,sup\,}}
\DeclareMathOperator{\supp}{supp}
\renewcommand{\chi}{{\bf 1}}
\theoremstyle{plain}
\newtheorem{theorem}[equation]{Theorem}
\newtheorem{lemma}[equation]{Lemma}
\newtheorem{proposition}[equation]{Proposition}
\newtheorem{claim}[equation]{Claim}
\theoremstyle{definition}
\newtheorem{definition}[equation]{Definition}
\theoremstyle{remark}
\newtheorem{remark}[equation]{Remark}
\numberwithin{equation}{section}
\begin{document}
\allowdisplaybreaks
\author{Li Chen}
\address{Li Chen
\\
Department of Mathematics
\\
Louisiana State University
\\
Baton Rouge, LA 70803-4918, USA}
\email{lichen@lsu.edu}

\author{José María Martell}
\address{Jos\'e Mar{\'\i}a Martell
\\
Instituto de Ciencias Matem\'aticas CSIC-UAM-UC3M-UCM
\\
Consejo Superior de Investigaciones Cient{\'\i}ficas
\\
C/ Nicol\'as Cabrera, 13-15
\\
E-28049 Madrid, Spain} \email{chema.martell@icmat.es}

\author{Cruz Prisuelos-Arribas}
\address{Cruz Prisuelos-Arribas
\\
Departamento de Física y Matemáticas	
\\
Universidad de Alcalá de Henares
\\
Plaza de San Diego, s/n
\\
E-28801 Alcalá de Henares, Madrid, Spain} 
\email{cruz.prisuelos@uah.es}

\title[The regularity problem for uniformly elliptic operators]{The regularity problem for uniformly elliptic operators in weighted spaces}

\thanks{The research leading to these results has received funding from the European Research Council under the European Union's Seventh Framework Programme (FP7/2007-2013)/ ERC	agreement no. 615112 HAPDEGMT. The authors also acknowledge financial support from the Spanish Ministry of Ministry of Science and Innovation, through the ``Severo Ochoa Programme for Centres of Excellence in R\&D'' (SEV-2015-0554). The second and third authors were partially supported by the Spanish Ministry of Science and Innovation,  MTM PID2019-107914GB-I00.}

\date{\today}
\subjclass[2010]{35J25, 35J70, 35B65, 35B45, 42B37, 42B25, 47A60, 47D06,  35J15}

\keywords{Regulatity problem,  uniformly elliptic operators in divergence form, Muckenhoupt weights,  singular non-integral operators, square functions, heat and Poisson semigroups, a priori estimates, off-diagonal estimates, square roots of elliptic operators, Kato's conjecture}

\begin{abstract}
This paper studies the regularity problem for block uniformly elliptic operators in divergence form with complex bounded measurable coefficients. We consider the case where the boundary data belongs to Lebesgue spaces with weights in the Muckenhoupt classes. Our results generalize those of S. Mayboroda (and those of P. Auscher and S. Stahlhut  employing the first order method) who considered the unweighted case. To obtain our main results we use the weighted Hardy space theory associated with elliptic operators recently developed by the last two named authors. One of the novel contributions of this paper is the use of an ``inhomogeneous'' vertical square function
which is shown to be controlled by the gradient of the function to which is applied in weighted Lebesgue spaces. 
\end{abstract}

\maketitle

\tableofcontents

\bigskip
\section{Introduction}
%%%%%%%%%%%%%%%%%%%%%%%%%%%%%%%%%%%%%%%%%%%%%%%%%
The study of elliptic boundary value problems on upper half spaces and on Lipschitz domains has a long history (see \cite{Ke94} for an introduction of major results for elliptic equations in divergence form with real symmetric coefficients). Recent breakthroughs in the field include \cite{HKMPII15, HKMPI15}, where the Dirichlet and  Regularity problems with real non-symmetric coefficients were considered.
 The study of elliptic problems with rough complex coefficients 
has been arousing great interest, 
 particularly after the solution of Kato's conjecture in \cite{AHLMT02}. In this direction, the connections between the Dirichlet, regularity and Neumann problems were studied in \cite{May10} for the block case. Some of the tools used in the latter reference include a Calderón-Zygmund theory adapted to singular ``non-integral" operators  (see \cite{Au07}, and also \cite{AMIII06, AMI07, AMII07}), as well as the a Hardy space theory adapted to elliptic operators (see \cite{HMay09, HMayMc11}). The reader is also referred to the work \cite{AS16} for the robust first order method to deal with more general cases. 

The main purpose of the present article is to  continue with this line of research and study the regularity problem for elliptic operators with block  structure, and with data in weighted Lebesgue spaces,  for Muckenhoupt weights. This is a natural problem to consider, in view of a well-established weighted Calderón-Zygmund theory in the series of papers \cite{AMIII06, AMI07, AMII07}  and the weighted Hardy space theory adapted to elliptic operators recently developed in \cite{MaPAI17, MaPAII17, PA17}. To state our main results, we need to introduce some background. Some  notation is taken from  \cite{Au07,MaPAI17,May10}. Let $A=(a_{jk})_{j,k=1}^n$ be an $n\times n$ matrix whose entries are $L^\infty$-valued complex coefficients defined on $\mathbb R^n$. We assume that $A$ satisfies the following uniform ellipticity condition: there exist $0<\lambda\le\Lambda<\infty$ such that
$$
\lambda\,|\xi|^2
\le
\Re A \,\xi\cdot\bar{\xi}
\quad\qquad\mbox{and}\qquad\quad
|A\,\xi\cdot \bar{\zeta}|
\le
\Lambda\,|\xi|\,|\zeta|,
$$
for all $\xi,\zeta\in\mathbb{C}^n$ and almost every $x\in \R^n$. We have used the notation
$\xi\cdot\zeta=\xi_1\,\zeta_1+\cdots+\xi_n\,\zeta_n$ and therefore
$\xi\cdot\bar{\zeta}$ is the usual inner product in $\mathbb{C}^n$, hence
$A(x)\,\xi\cdot\bar{\zeta}=\sum_{j,k}a_{j,k}(x)\,\xi_k\,\bar{\zeta_j}$. Associated with this matrix, consider the second order divergence form uniformly elliptic operator $
L:=-\divv (A\nabla)$. 

Given $A$ and $L$ as above we construct  the block matrix
$$
\A:= \br{\begin{array}{cc}A & 0 \\0 & 1\end{array}}. 
$$
Clearly, $\A$ is uniformly elliptic in $\R^{n+1}$ and, thus, $\A$ gives rise to the block divergence form uniformly elliptic operator 
\[
\L:=-\divv_{x,t} (\mathbb A\nabla_{x,t})
= -\divv_x (A\nabla_x)-\partial_{t}^2
=L_x-\partial_{t}^2.
\]
Here and elsewhere the points in $\R^{n+1}$ are written as $(x,t)\in\R^n\times \R$ so that $\nabla_{x,t}$ and $\divv_{x,t}$ denote respectively the full gradient and divergence, while $\nabla_x$ and $\divv_x$ stand respectively for the gradient and divergence in the first $n$ variables, and $L_x=L$. 

With the previous definition in hand, it is easy to see that whenever $f\in \mathcal{S}$, one has that $u(x,t):=e^{-t\sqrt L}f(x)$, where  $(x,t)\in\R_+^{n+1}$, is a weak solution of $\L u=0$ in $\R_{+}^{n+1}$. By this we mean that $u\in W_{\textrm{loc}}^{1,2}(\R_+^{n+1})$ satisfies 
\[
\iint_{\R_+^{n+1}} \A(x)\nabla_{x,t} u(x,t) \cdot \nabla_{x,t} \psi(x,t)\, dx\,dt =0,\qquad \forall\,\psi \in C_{0}^{\infty}(\R_+^{n+1}).
\]
Also, $u(\cdot,t)\to f$ in $L^2(\R^n)$ as $t\to 0^+$. We would like to observe that the latter convergence combined with \cite[Lemma~8.3]{AE} gives non-tangential convergence 
almost everywhere of the averages of $u$ over Whitney boxes to $f$ (see \cite[Remark~2.8]{AE} for more details).

Given $u\in L^2_{\textrm{loc}}(\R_+^{n+1})$ consider the $L^2$-non-tangential maximal function $\Ncal$ defined as
\begin{equation}
\Ncal u(x)
:=
\sup_{(y,t)\in \Gamma^\kappa(x)} \br{ \frac1{|D((y,t),\kappa t)|}\iint_{D((y,t),\kappa t)} |u(z,s)|^2 dzds}^{\frac12}, \,\,\,x\in \R^n,
\end{equation}
where $0<\kappa<1$ is a fixed small constant, $\Gamma^{\kappa}(x):=\{(y,t)\in \R_+^{n+1}:|x-y|< \kappa t\}$, and $D((y,t),\kappa t)$ denotes the $\R^{n+1}$-ball centered at $(y,t)$ with radius $\kappa t$, which is by construction contained in $\R^{n+1}_+$.

Using the previous definitions,  the regularity boundary value problem $(R_p)$, $1<p<\infty$, is said to be solvable for the operator $\L$ in $\R_{+}^{n+1}$ if, for every $f\in \mathcal{S}$, the weak solution to the equation $\L u=0$ (with boundary data $f$) given by the Poisson semigroup $u(x,t)=e^{-t\sqrt L}f(x)$, $(x,t)\in\R_+^{n+1}$, satisfies the non-tangential maximal function estimate
\begin{equation}\label{affr}
\norm{\Ncal(\nabla_{x,t}u)}_{L^p(\R^n)} \leq C\norm{\nabla f}_{L^p(\R^n)}
\end{equation}
and $u(\cdot,t)\to f$ in $L^2(\R^n)$ as $t\to 0^+$. As observed above, and since $f$ is a nice function, we already know that the latter convergence holds and hence one also has non-tangential convergence almost everywhere (see \cite{AE}). This means that in the solvability of the regularity problem what matters is the estimate \eqref{affr}, and for the convergence to boundary data there are several alternative conditions that one can take as part of the definition. 

The solvability of $(R_p)$, $1<p<\infty$, was proved by S. Mayboroda in \cite{May10}. In particular, the result is the following:
\begin{theorem}[{\cite[Thorem 4.1]{May10}}]\label{thm:May}
Let $\L$ be a block  elliptic operator in $\R_+^{n+1}$ as above.   Then for any $p$ so that $\max\bbr{1, \frac{n q_-(L)}{n+q_-(L)}}<p< q_+(L)$, the regularity problem $(R_p)$ is solvable. 
\end{theorem}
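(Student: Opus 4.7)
The plan is to exploit the block structure. Since $u(x,t) = e^{-t\sqrt{L}} f(x)$ solves $\L u = 0$, one has
\begin{equation*}
\partial_t u = -\sqrt{L}\, e^{-t\sqrt{L}} f = -e^{-t\sqrt{L}} L^{1/2} f, \qquad \nabla_x u = \nabla_x e^{-t\sqrt{L}} f,
\end{equation*}
so the target estimate $\|\Ncal(\nabla_{x,t} u)\|_{L^p(\R^n)} \lesssim \|\nabla f\|_{L^p(\R^n)}$ decouples into separate bounds for $\Ncal(\partial_t u)$ and $\Ncal(\nabla_x u)$.

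For the normal component, I would first invoke the $L^p$ form of Kato's square root theorem, $\|L^{1/2} f\|_{L^p(\R^n)} \lesssim \|\nabla f\|_{L^p(\R^n)}$, valid in the stated range of $p$, and then reduce matters to the boundedness $\|\Ncal(e^{-t\sqrt{L}} g)\|_{L^p(\R^n)} \lesssim \|g\|_{L^p(\R^n)}$. The latter follows from Poisson subordination together with the $L^p$ off-diagonal estimates for the heat semigroup $\{e^{-sL}\}_{s>0}$ (available in the extrapolation range of $L$ dictated by $q_-(L)$), ultimately reducing to a Hardy--Littlewood maximal function pointwise control.

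For the tangential component, the identity $\nabla_x u = (\nabla_x L^{-1/2})(L^{1/2} e^{-t\sqrt{L}} f)$ involves the Riesz transform and is suggestive, but cannot be inserted directly into the nonlinear functional $\Ncal$. Instead, I would dominate $\Ncal(\nabla_x u)$ by a conical square function of $\nabla_{x,t} u$: since $u$ is a weak solution of $\L u=0$, Caccioppoli's inequality on Whitney boxes upgrades $L^2$-averages of $\nabla_{x,t} u$ to averages of $t\partial_t \nabla_{x,t} u$, which by the semigroup identities are expressed in terms of $\{t\sqrt{L}\,e^{-t\sqrt{L}}\}_{t>0}$ and $\{t\,\nabla_x e^{-t\sqrt{L}}\}_{t>0}$ applied to $L^{1/2} f$. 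The Hardy space and tent space theory for $L$ from \cite{HMay09, HMayMc11, Au07} then delivers $L^p$-boundedness of the resulting conical square functions, and Kato's identity $\|L^{1/2} f\|_{L^p(\R^n)} \approx \|\nabla f\|_{L^p(\R^n)}$ closes the chain.

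The main obstacle is assembling all the boundedness ranges coherently. The upper threshold $p<q_+(L)$ arises because it is precisely the range where $\{\sqrt{s}\,\nabla e^{-sL}\}_{s>0}$ satisfies the off-diagonal $L^p$ estimates needed both for the $L^p$-boundedness of the Riesz transform $\nabla L^{-1/2}$ and for the control of the conical square function built from $\nabla_x e^{-t\sqrt{L}}$. The lower endpoint $\max\{1,\,nq_-(L)/(n+q_-(L))\}$ reflects the Sobolev-dual threshold at which the molecular decomposition of the Hardy space adapted to $L$ behaves correctly, incorporating the Sobolev embedding needed to transfer estimates from $\nabla f$ down to $f$ in the low-$p$ regime; matching this endpoint with the tent space duality is precisely what forces the shape of the admissible interval.
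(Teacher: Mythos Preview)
Your split into normal and tangential parts matches both Mayboroda's argument and this paper's (Theorem~\ref{thm:May} is cited from \cite{May10}; the paper's own contribution is the weighted Theorem~\ref{thm:main}, whose proof specializes to the unweighted case and, for the tangential part, is offered explicitly as an alternative to Mayboroda's Steps II--VIII). But both halves of your sketch have real gaps. For $\partial_t u$: the reverse Riesz bound $\|L^{1/2}f\|_{L^p}\lesssim\|\nabla f\|_{L^p}$ does hold in the stated range, but the step $\|\Ncal_{\pp} g\|_{L^p}\lesssim\|g\|_{L^p}$ is only available for $p>p_-(L)=q_-(L)$, which can be strictly above the lower endpoint $\max\{1,\,nq_-(L)/(n+q_-(L))\}$. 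To reach the full range one must pass through the Hardy space adapted to $L$: $\Ncal_{\pp}$ maps $H^p_{\Scal_{\hh},q}$ into $L^p$ for all $1\le p<p_+(L)$ (molecular theory plus interpolation), and the Riesz transform characterization yields $\|L^{1/2}f\|_{H^p_{\Scal_{\hh},q}}\lesssim\|\nabla f\|_{L^p}$ in precisely the stated range; see Proposition~\ref{prop:part-t} and \cite{May10,HMay09,HMayMc11}. A pointwise Hardy--Littlewood domination does not deliver this low-$p$ regime.

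For $\nabla_x u$, the Caccioppoli step is in the wrong direction: Caccioppoli bounds Whitney averages of $\nabla_{x,t}u$ by $t^{-1}$ times averages of $u$, not by averages of $t\partial_t\nabla_{x,t}u$, so it cannot produce the square-function domination you assert. The paper's route (Proposition~\ref{prop:NcalGrad}) is substantially different: compare $\nabla e^{-t\sqrt L}$ with $\nabla e^{-t^2L}$ via the subordination formula~\eqref{eqn:subord}; control the heat piece by off-diagonal and Poincar\'e estimates (Proposition~\ref{prop:Gsum}); and dominate the difference pointwise by the \emph{inhomogeneous} vertical square function $\wt\Grm_{\hh}f$ of~\eqref{eq:wtG} together with the conical $\Grm_{\hh}(\sqrt L f)$. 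The crux is the new estimate $\|\wt\Grm_{\hh}f\|_{L^p}\lesssim\|\nabla f\|_{L^p}$ of Theorem~\ref{thm:wtG}. Pushing $p$ below $q_-(L)$ then requires a Calder\'on--Zygmund decomposition adapted to $\nabla f$ (Lemma~\ref{lemma:C-Z-decomposition}) and a weak-type endpoint argument. None of this structure is present in your outline.
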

The interval $(q_-(L), q_+(L))$ represents  the maximal open interval on which the gradient of the heat semigroup $\{\sqrt t\nabla  e^{-tL}\}_{t>0}$ is uniformly bounded on $L^p(\R^n)$. More specifically:
\begin{align}
& q_-(L):=\inf\bbr{p\in (1,\infty): \sup_{t>0} \norm{\sqrt t\nabla  e^{-tL}}_{L^p(\R^n)\to L^p(\R^n)}< \infty},
\\
 & q_+(L):=\sup\bbr{p\in (1,\infty): \sup_{t>0} \norm{\sqrt t\nabla  e^{-tL}}_{L^p(\R^n)\to L^p(\R^n)}< \infty}.
\end{align}
Similarly,  $(p_-(L), p_+(L))$ denotes the maximal open interval of $p$ such that  the heat semigroup $\{e^{-tL}\}_{t>0}$ is  uniformly bounded on $L^p(\R^n)$. As we shall explain in more detail  in Section \ref{section:off}, these two intervals are somehow related, and for instance $p_-(L)=q_-(L)$, see \cite{Au07, AMII07} where these intervals were deeply studied.

\medskip

The goal of this paper is to extend Theorem \ref{thm:May} to the context of weighted Lebesgue spaces $L^p(w)$ with $w$ being a Muckenhoupt weight. 
To set the stage we first give some definitions and basic properties of Muckenhoupt weights.  For further details,
see~\cite{Du01, GCRF85, GrafakosI}. Given a weight $w$, that is,  a measurable function such that $0<w<\infty$ a.e. and $w\in L^1_{\rm loc}(\R^n)$, we say that $w\in A_p$, $1<p<\infty$, if
\[ 
[w]_{A_p} := \sup_B \left(\fint_B w(x)\,dx\right) \left(\fint_B
  w(x)^{1-p'}\,dx\right)^{p-1} < \infty, 
 \]
and, when $p=1$, we say that $w\in A_1$ if
\[ [w]_{A_1} := \sup_B \left(\fint_B w(x)\,dx\right)  \left(\esssup_{x\in B} w(x)^{-
	1}\right)<
\infty.  \]
Here and below the suprema are taken over all balls $B\subset \R^n$. 

We next introduce the reverse Hölder classes. We say that $w\in RH_s$, $1<s<\infty$, if
\[ [w]_{RH_s} := \sup_B \left(\fint_B w(x)\,dx\right )^{-1}
\left(\fint_B w(x)^s\,dx\right )^{1/s} < \infty, \]
and, that $w\in RH_\infty$, if
\[ [w]_{RH_\infty} := \sup_B\left(\fint_B w(x)\,dx\right)^{-1} \left(\esssup_{x\in B} w(x)\right)  <
\infty.  \]
Finally, we set
\[ A_\infty := \bigcup_{1\leq p <\infty} A_p  = \bigcup_{1<s\le \infty}
RH_s.  \]

We recall Muckenhoupt's theorem which states that $w\in A_p$, $1<p<\infty$, if and only if the Hardy-Littlewood maximal function 
$$
\mathcal{M} f(x):=\sup_{B\ni x} \fint_B|f(y)|\,dy,
$$
is bounded on $L^p(w)=L^p(w\,dx)$. In the case that $p=1$, the boundedness of $\mathcal{M}$ from $L^1(w)$ to $L^{1,\infty}(w)$ is equivalent to the fact that $w\in A_1$.

\medskip

It is well-known that $w\in A_\infty$ implies that  $w$ is a doubling measure. Indeed, for any $w\in A_r$, $1\leq r<\infty$, we have
\begin{align}\label{doublingcondition}
w(\lambda B)
\le
[w]_{A_r}\,\lambda^{n\,r}w(B),
\qquad \forall\,B,\ \forall\,\lambda>1.
\end{align}
Consequently,  $(\R^n,dw,|\cdot|)$ is a space of homogeneous type. 

Another important feature of Muckenhoupt classes is their openness. More precisely, 
the $A_p$ and $RH_s$ classes have the following self-improving
property: if $w\in A_p$, there exists $\epsilon>0$ such that $w\in
A_{p-\epsilon}$, and similarly if $w\in RH_s$, then $w\in
RH_{s+\delta}$ for some $\delta>0$.  These facts motivate the following definitions:
\begin{equation}
r_w:=\inf\big\{p:\ w\in A_p\big\}, \qquad s_w:=\inf\big\{q:\ w\in RH_{q'}\big\}.
\label{eq:defi:rw}
\end{equation}
Note that, according to our definition, $s_w$ is the conjugated exponent of the one defined in \cite[Lemma 4.1]{AMI07}.
Given $0\le p_0<q_0\le \infty$ and $w\in A_{\infty}$,  \cite[Lemma 4.1]{AMI07} implies that
\begin{align}\label{intervalrs}
\mathcal{W}_w(p_0,q_0):=\left\{p\in (p_0, q_0): \ w\in A_{\frac{p}{p_0}}\cap RH_{\left(\frac{q_0}{p}\right)'}\right\}
=
\left(p_0r_w,\frac{q_0}{s_w}\right).
\end{align}
If $p_0=0$ and $q_0<\infty$ it is understood that the only condition that stays is $w\in RH_{\left(\frac{q_0}{p}\right)'}$. Analogously, 
if $0<p_0$ and $q_0=\infty$ the only assumption is $w\in A_{\frac{p}{p_0}}$. Finally $\mathcal{W}_w(0,\infty)=(0,\infty)$.

\medskip

After these observations and definitions, we can introduce the notion of solvability in  the weighted context. More precisely, 
given a weight $w\in A_{\infty}$ we say that the weighted regularity boundary value problem $(R_p^w)$ is solvable for the operator $\L$ in $\R_{+}^{n+1}$, if, for every $f\in \mathcal{S}$,  the weak solution to the equation $\L u=0$ with the boundary data $f$ given by the Poisson semigroup $u(x,t)=e^{-t\sqrt L}f(x)$, $(x,t)\in\R_+^{n+1}$, satisfies the non-tangential maximal function estimate
\begin{equation}
\norm{\Ncal(\nabla_{x,t}u)}_{L^p(w)} \leq C\norm{\nabla f}_{L^p(w)}
\end{equation}
and 
$u(\cdot,t)\to f$ in $L^2(\R^n)$ as $t\to 0^+$.

Our main result establishes the solvability of $(R_p^w)$:
\begin{theorem}\label{thm:main}
Let $\L$ be a block  elliptic operator in $\R_+^{n+1}$ as above and let $w\in A_{\infty}$ be such that $\mathcal{W}_w(q_-(L),q_+(L))\neq \emptyset$. For every $p$ such that
\begin{equation}\label{range-p:thm}
\max\bbr{r_w, \tfrac{nr_w\widehat q_-(L)}{nr_w+\widehat q_-(L)}}<p<\frac{q_+(L)}{s_w},
\end{equation}
and for every $f\in \mathcal{S}$, if one sets $u(x,t)=e^{-t\sqrt L}f(x)$, $(x,t)\in\R_+^{n+1}$, then 
\begin{equation}\label{Est-main:thm}
\norm{\Ncal(\nabla_{x,t} u)}_{L^p(w)} \leq C\norm{\nabla f}_{L^p(w)}
\end{equation}
 and  $(R_p^w)$ is solvable.
\end{theorem}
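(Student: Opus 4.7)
The plan is to split the full gradient as $\nabla_{x,t}u=(\nabla_x u,\partial_t u)$, where $u(x,t)=e^{-t\sqrt L}f(x)$, and bound each component of $\Ncal(\nabla_{x,t}u)$ in $L^p(w)$ separately by $\|\nabla f\|_{L^p(w)}$. The $L^2$-convergence $u(\cdot,t)\to f$ as $t\to 0^+$ is immediate from $f\in\mathcal{S}$ and the $L^2$-contractivity of the Poisson semigroup, so the real issue is the non-tangential maximal estimate.

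For the time derivative, use $\partial_tu=-\sqrt L\,e^{-t\sqrt L}f=-e^{-t\sqrt L}(\sqrt L f)$. Combining the weighted Hardy space theory adapted to $L$ from \cite{MaPAI17,MaPAII17,PA17}, the $L^p(w)$-boundedness of the Poisson semigroup on $\mathcal{W}_w(p_-(L),p_+(L))$, and off-diagonal decay to pass to Whitney averages, I would obtain
\[
\|\Ncal(e^{-t\sqrt L}g)\|_{L^p(w)}\lesssim \|g\|_{L^p(w)}.
\]
Applying this with $g=\sqrt L f$ and then invoking the weighted Kato estimate $\|\sqrt L f\|_{L^p(w)}\lesssim\|\nabla f\|_{L^p(w)}$, which is the $L^p(w)$-counterpart of \cite{AHLMT02} valid on $\mathcal{W}_w(q_-(L),q_+(L))$, yields the desired bound for the $t$-component on the interval \eqref{range-p:thm}.

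The spatial gradient is the main obstacle. Following the scheme of Mayboroda and Auscher--Stahlhut, I would use Caccioppoli for $\L$-solutions on Whitney boxes to dominate $\Ncal(\nabla_x u)$ pointwise by a conical square function of $t\,\partial_t\nabla_{x,t}u$, and then pass to a vertical version via the weighted tent-space identifications from \cite{MaPAI17,MaPAII17,PA17}. The crux is to prove that the \emph{inhomogeneous vertical square function} advertised in the abstract, of schematic form
\[
S_{\mathrm{inh}}g(x)=\br{\int_0^\infty\abs{\Psi(t\sqrt L)g(x)}^2\,\tfrac{dt}{t}}^{1/2},
\]
with $\Psi$ an appropriate symbol that does not vanish at the origin, satisfies the \emph{gradient-controlled} bound $\|S_{\mathrm{inh}}g\|_{L^p(w)}\lesssim\|\nabla g\|_{L^p(w)}$ on \eqref{range-p:thm}. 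The upper endpoint $q_+(L)/s_w$ then arises from the weighted $L^p(w)$-boundedness of $\{\sqrt t\,\nabla e^{-tL}\}_{t>0}$ on $\mathcal{W}_w(q_-(L),q_+(L))$, while the lower endpoint $nr_w\widehat q_-(L)/(nr_w+\widehat q_-(L))$ comes from a weighted Sobolev--Poincaré inequality used to pass from the low-frequency part of $\Psi(t\sqrt L)g$ to $\nabla g$, with the Sobolev exponent adjusted by the weight's Muckenhoupt exponent $r_w$. The principal technical difficulty is that the unweighted Mayboroda/Auscher--Stahlhut arguments rely on duality with $H^1$ and tent spaces, and the transfer to $L^p(w)$ demands both the weighted Hardy space machinery of \cite{MaPAI17,MaPAII17,PA17} and a careful choice of the inhomogeneous symbol $\Psi$ so that its behaviour at zero is compatible with $\nabla$ rather than with $\sqrt L$.
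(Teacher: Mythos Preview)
Your overall strategy (split $\nabla_{x,t}u$ into time and space components, invoke the inhomogeneous vertical square function for the spatial part) matches the paper's architecture, but there are two genuine gaps that would prevent you from reaching the full range \eqref{range-p:thm}.

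\textbf{Time derivative.} The bound $\|\Ncal(e^{-t\sqrt L}g)\|_{L^p(w)}\lesssim\|g\|_{L^p(w)}$ you propose is only known on $\mathcal{W}_w(p_-(L),(p_+(L))^*)$, whose lower endpoint is $r_w\,p_-(L)$. This can be strictly larger than $\max\{r_w,\tfrac{nr_w\widehat q_-(L)}{nr_w+\widehat q_-(L)}\}$. The paper avoids this obstruction by \emph{not} staying in $L^p(w)$: it interpolates the non-tangential maximal operator $\Ncal_{\pp}$ between the weighted Hardy spaces $H^p_{\Scal_{\hh},q}(w)$ (using the $H^1$ endpoint from \cite{MaPAII17}) to get $\Ncal_{\pp}:H^p_{\Scal_{\hh},q}(w)\to L^p(w)$ for all $1\le p<p_+(L)/s_w$, and then invokes the Riesz transform characterization $\|\sqrt L f\|_{H^p_{\Scal_{\hh},q}(w)}\lesssim\|\nabla f\|_{L^p(w)}$ from \cite{PA17}. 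Your direct composition with the weighted Kato estimate misses the low end of the range.

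\textbf{Spatial gradient.} Your Caccioppoli-to-conical-square-function reduction is not what is done here; the paper instead uses the subordination formula to compare $\nabla e^{-t\sqrt L}f$ with $\nabla e^{-t^2 L}f$, and then off-diagonal estimates plus the bound on $\wt\Grm_{\hh}$ (Theorem \ref{thm:wtG}) handle the difference. More seriously, this route by itself only yields the estimate for $\max\{r_w,\widehat q_-(L)\}<p<q_+(L)/s_w$. To push the lower endpoint down to $\tfrac{nr_w\widehat q_-(L)}{nr_w+\widehat q_-(L)}$ the paper runs a separate weak-type argument: a Calder\'on--Zygmund decomposition of $f$ adapted to $\nabla f$ (Lemma \ref{lemma:C-Z-decomposition}), with the bad part further split via $(I-e^{-r_{B_i}^2 L})^M$, followed by a careful off-diagonal/functional-calculus estimate and real interpolation of Sobolev spaces. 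Your remark that the lower endpoint ``comes from a weighted Sobolev--Poincar\'e inequality'' is correct in spirit (it enters through \eqref{C-Z:extrab}), but the actual mechanism is a weak $(\widetilde q,\widetilde q)$ bound plus interpolation, and nothing in your outline indicates you would carry this out. Without it you cannot claim the stated range.
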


In the previous result $(\wh q_-(L),\wh q_+(L))$ denotes the maximal open interval where $\{\sqrt{t} \nabla  e^{-t L}\}_{t>0}$ is uniformly bounded on $	L^p(w)$, or equivalently, where $\{\sqrt{t} \nabla  e^{-t L}\}_{t>0}$ satisfies  $L^p(w)$-$L^q(w)$ off-diagonal estimates on balls. 

We note that Theorem \ref{thm:main} is the natural extension of \cite[Thorem 4.1]{May10} (see Theorem \ref{thm:May} above). Indeed, if $w\equiv 1$ we have $r_w=1$ and $s_w=1$. Then, by definition, (see \cite{Au07} and Section \ref{section:off} below)
\[
(q_-(L),q_+(L))=(\wh q_-(L),\wh q_+(L))=\mathcal{W}_w(q_-(L),q_+(L)).
\]
Consequently, $\mathcal{W}_w(q_-(L),q_+(L))\neq \emptyset$, since $q_-(L)<2<q_+(L)$. Besides, 
\[
\max\bbr{r_w, \tfrac{nr_w\widehat q_-(L)}{nr_w+\widehat q_-(L)}}
=
\max\bbr{1, \tfrac{nq_-(L)}{n+ q_-(L)}},
\qquad
\frac{q_+(L)}{s_w}
=
q_+(L),
\]
and thus our range in Theorem \ref{thm:main} agrees with that of \cite[Thorem 4.1]{May10} (see Theorem \ref{thm:May} above).

Before explaining our approach to proving Theorem \ref{thm:main}, we  make an observation to clarify the hypothesis   requiring that the interval $\mathcal{W}_w(q_-(L),q_+(L))$ is not empty.
\begin{remark}
The fact that $\mathcal{W}_w(q_-(L),q_+(L))\neq \emptyset$ means that $\frac{q_+(L)}{q_-(L)}>r_w\, s_w$. This is a compatibility condition between $L$ and $w$. It  appears naturally in the theory developed in \cite{AMIII06, AMI07, AMII07} and forces the weight to be sufficiently good, depending on how close  $q_-(L)$ and $q_+(L)$ are to each other.
 As it was shown in \cite{Au07}, $q_-(L)=p_-(L)<\frac{2n}{n+2}$ while, in general, we only know that $q_+(L)>2$, actually, there are examples showing that $q_+(L)$ can be found arbitrarily close to two. Thus, 
$\frac{q_+(L)}{q_-(L)}>1+\frac2{n}$, and if $r_ws_w<1+\frac2{n}$, we always have that  $\mathcal{W}_w(q_-(L),q_+(L))\neq \emptyset$, for every $L$ as above. Of course, this is a very restrictive condition since we would expect to have wider classes of weights 
as the operator $L$ gets ``nicer''.
A very illustrative example is that of power weights of the form $\omega_\alpha(x):=|x|^{\alpha}$. The weight $\omega_\alpha$ belongs to $A_\infty$, if and only if, $\alpha>-n$. It is not hard to see that $r_w=\max\{1,1+\alpha/n\}$ and $s_w=\max\{1,(1+\alpha/n)^{-1}\}$ (see the definitions of $r_w$ and $s_w$ in \eqref{eq:defi:rw}) and hence $\frac{q_+(L)}{q_-(L)}>r_w\, s_w$ implies that $-n\big(1-\frac{q_-(L)}{q_+(L)}\big)<\alpha<n\big(\frac{q_+(L)}{q_-(L)}-1\big)$. Note that this range of $\alpha$ always contains the interval $[-\frac{2n}{n+2},2]$ and gets bigger as $q_-(L)$ decreases to one, and/or $q_+(L)$ increases to infinity.
\end{remark}

To proof Theorem \ref{thm:main} we independently  consider the estimate of the weighted norm of the non-tangential maximal function for the time derivative $\partial_t e^{-t\sqrt L}f$, and the one for the spatial derivatives $\nabla e^{-t\sqrt L}f$. 
For the former we use ideas from \cite[Theorem 4.1]{May10} combined with the weighted Hardy space theory associated with elliptic operators developed in  \cite{MaPAI17,MaPAII17,PA17}.\footnote{We note that there is an alternative and independent method developed in the forthcoming paper \cite{ChMPA18} to deal with the regularity problem for degenerate elliptic operators which is based on some adapted Calderón-Zygmund theory, and does not require to deal with the highly technicality of Hardy spaces. The corresponding result in \cite{ChMPA18} gives however a more restricted range.}

Regarding, the spatial derivatives we follow a different path, for which we need to introduce the following ``inhomogeneous'' vertical square function 
\begin{equation}\label{eq:wtG}
\wt \Grm_{\mathrm H} f(x):=\br{\int_{0}^{\infty} \abs{t^2 \nabla L e^{-t^2 L}f(x)}^2 \frac{dt}{t}}^{\frac12}.
\end{equation}
Note that compared with the usual vertical square functions (or Littlewood-Paley-Stein functionals), the power of $t$ does not seem to be consistent with the order of the operator in the integral. Indeed, the power of $t$ corresponding to the usual square function would be three instead of two. That is why we use the terminology ``inhomogeneous''.  This has the effect of getting an
estimate  for $\wt \Grm_{\mathrm H} f$ in terms of $\nabla f$ in $L^q(w)$, rather than obtaining   an estimate from $L^q(w)$ to $L^q(w)$ (which is the natural one for the usual square functions).  We would like to mention that in the case that $L$ is the Laplace operator, this inhomogeneous square function (as well as its relation to reverse Riesz transform estimates), was studied in \cite{CD03} in the setting of Riemannian manifolds.% \note{An analogue for the Poisson semigroup appears in a paper of Dominique Barky...... } 
 
 As we shall see below, the estimate regarding the non-tangential maximal function for the spatial derivatives can be inferred from the following estimate of the inhomogeneous vertical square function introduced in \eqref{eq:wtG}. This approach is one of the novel contributions of this paper even in the unweighted case. 
\begin{theorem}\label{thm:wtG}
Let $w\in A_{\infty}$ and assume that  $\mathcal{W}_w(q_-(L),q_+(L))\neq \emptyset$.  Given $q$ so that $\max\bbr{r_w, \frac{nr_w\wh q_-(L)}{nr_w+\wh q_-(L)}}<q <\wh q_+(L)$ there holds
\begin{equation}\label{eq:Lp-wtG}
\norm{\wt \Grm_{\mathrm H} f}_{L^q(w)} \lesssim \norm{\nabla f}_{L^q(w)}, 
\end{equation}
for every $f\in \mathcal{S}$.
\end{theorem}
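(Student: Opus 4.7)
The plan is to reduce the estimate to a composition of the Riesz transform $\nabla L^{-1/2}$ and a standard vertical square function adapted to $L$, and then invoke the weighted reverse Kato estimate.

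The key algebraic observation is that, setting $\phi(z):=z\,e^{-z}$ and $g:=L^{1/2}f$, one has
\[
t^2\,\nabla L\, e^{-t^2 L}f \;=\; \nabla L^{-1/2}\bigl(\phi(t^2L)\, g\bigr),
\]
since $\nabla L^{-1/2}\cdot L = \nabla L^{1/2}$ and $L^{1/2}$ commutes with $e^{-t^2L}$. Thus
\[
\wt\Grm_{\mathrm H}f(x) \;=\; \Bigl(\int_0^\infty \bigl|\nabla L^{-1/2}\bigl(\phi(t^2L)g\bigr)(x)\bigr|^2\,\tfrac{dt}{t}\Bigr)^{1/2}.
\]
By the weighted reverse Kato estimate $\|L^{1/2}f\|_{L^q(w)}\lesssim\|\nabla f\|_{L^q(w)}$, valid in the range stated in the theorem (the weighted extension of \cite{AHLMT02, Au07} carried out in the framework of \cite{AMI07, AMII07, MaPAI17}), it suffices to prove the $L^q(w)$-bound
\[
\Bnorm{\Bigl(\int_0^\infty \bigl|\nabla L^{-1/2}\bigl(\phi(t^2L)g\bigr)\bigr|^2\,\tfrac{dt}{t}\Bigr)^{1/2}}_{L^q(w)}\;\lesssim\;\|g\|_{L^q(w)}.
\]

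I would handle the reduced inequality in two pieces. First, the scalar vertical square function $\mathfrak{g}_\phi g := (\int_0^\infty |\phi(t^2L)g|^2\, dt/t)^{1/2}$ is bounded on $L^q(w)$ in the indicated range by the weighted square function theory of \cite{MaPAI17, PA17}, since $\phi$ has the correct order of vanishing at the origin and exponential decay at infinity. Second, the Riesz transform $\nabla L^{-1/2}$ is bounded on $L^q(w)$ for $q\in\mathcal{W}_w(\wh q_-(L),\wh q_+(L))$ by \cite{AMII07}. To combine these, I would upgrade the Riesz transform to its vector-valued extension $\nabla L^{-1/2}\colon L^q(w;L^2(\R_+,dt/t))\to L^q(w;L^2(\R_+,dt/t))$; this follows because $\nabla L^{-1/2}$ falls under the vector-valued version of the Auscher-Martell Calder\'on-Zygmund framework for singular non-integral operators.

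\textbf{Main obstacle.} The most delicate point is verifying the $L^2(\R_+,dt/t)$-valued extension of $\nabla L^{-1/2}$ in precisely the weighted range of the theorem, and matching this range against the ones coming from the reverse Kato estimate and the scalar square function. An alternative route avoids the vector-valued step: verify the off-diagonal decay of the generating family $\{t^2\nabla L\, e^{-t^2L}\}_{t>0}$ together with an Auscher-Martell good-$\lambda$ comparison of $\wt\Grm_{\mathrm H} f$ with dyadic maximal averages of $|\nabla f|^{p_0}$ for some $p_0$ in the indicated range, and then apply weighted Calder\'on-Zygmund extrapolation from \cite{AMI07}; the careful matching of scales in this alternative is what produces the Sobolev-type lower endpoint $nr_w\wh q_-(L)/(nr_w+\wh q_-(L))$ on $q$.
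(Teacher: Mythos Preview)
Your decomposition $t^2\nabla L e^{-t^2L}f = t^2\nabla\sqrt{L}\,e^{-t^2L}(\sqrt{L}f)$ together with the reverse Kato estimate is exactly what the paper uses to cover the \emph{upper} portion of the range, $q_+(L)/s_w \le q < \wh q_+(L)$ (see the last paragraph of the paper's proof, invoking Lemma~\ref{lem:wVSF} and Theorem~\ref{thm:wRR}). The genuine gap in your proposal is at the lower end. For $\max\{r_w,\frac{nr_w\wh q_-(L)}{nr_w+\wh q_-(L)}\}<q\le\wh q_-(L)$ --- a nonempty interval whenever $\wh q_-(L)>r_w$, e.g.\ already in the unweighted case $w\equiv1$ with $q_-(L)>1$ --- neither the Riesz transform $\nabla L^{-1/2}$ nor the vertical gradient square function of Lemma~\ref{lem:wVSF} is bounded on $L^q(w)$: the interval $(\wh q_-(L),\wh q_+(L))$ is by definition maximal for the uniform $L^q(w)$-boundedness of $\sqrt t\,\nabla e^{-tL}$, and both of your building blocks inherit that lower constraint. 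Hence your vector-valued extension route cannot go below $\wh q_-(L)$, and your alternative good-$\lambda$/extrapolation route remains a sketch; the paper's own footnote indicates that a direct Calder\'on--Zygmund argument of that flavor gives only a restricted range.

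The paper closes this gap by a different mechanism. It first dominates the vertical $\wt\Grm_{\mathrm H}f$ by the \emph{conical} square functions $\mathrm{G}_{\hh}(\sqrt{L}f)$ and then $\Scal_{2,\hh}(\sqrt{L}f)$, via off-diagonal estimates and a comparison from \cite{PA18}; this step holds for all $0<q<q_+(L)/s_w$ with no lower restriction. The crucial observation is then that although $\sqrt{L}f$ need not behave well in $L^q(w)$ for small $q$, it always lies in the Riesz-transform Hardy space $\mathbb{H}^q_{\nabla L^{-1/2},p}(w)$, since $\nabla L^{-1/2}(\sqrt{L}f)=\nabla f\in L^q(w)$. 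The Hardy-space characterization \cite[Proposition~9.1]{PA17} then yields $\|\Scal_{2,\hh}(\sqrt{L}f)\|_{L^q(w)}\lesssim\|\nabla f\|_{L^q(w)}$ down to the Sobolev-type endpoint. In short, the lower endpoint $\frac{nr_w\wh q_-(L)}{nr_w+\wh q_-(L)}$ does not come from a good-$\lambda$ comparison with $|\nabla f|$ as you suggest, but from the weighted Hardy-space theory for conical square functions, which is precisely the machinery your proposal does not invoke.
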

% Then if $w=1$ we have
%\begin{corollary}
%Let $\max\{1, \frac{n q_-}{n+ q_-}\}<q <q_+$. Then
%$$
%\norm{\wt \Grm_{\mathrm H} (f)}_{L^q} \lesssim \norm{\nabla f}_{L^q}, 
%$$
%for every $f\in \dot W^{1,q}$.
%\end{corollary}

\medskip

The plan of the paper is as follows. In Section \ref{section:off} we present some preliminaries and some auxiliary results needed in Sections \ref{sect:inh-SF} and  \ref{section:main-proof}. As for these two last sections, the former is devoted to the proof of Theorem \ref{thm:wtG}, and the later to proving Theorem \ref{thm:main}.

%%%%%%%%%%%%%%%%%%%%%%%%%%%%%%%%%%%%%%%%%%%%%%%%%%%%%%%%%%%%%%%%
%\section{Preliminaries}
%We first remark the following notation: $dx$ denotes the usual Lebesgue measure in $\R^n$, $dw$ denotes the measure in $\R^n$ given by the weight $w$. 
%%Besides, we assume $n\geq 2$, since we use some results in \cite{CMR15} which need such assumption.  
%
%Given a ball $B$, let $r_B$ denote the radius of $B$.  We write $\lambda B$ for the concentric ball with radius $r(\lambda B) = \lambda r_B$. Moreover, we set $C_1(B)=4B$ and, for $j\geq 2$, $C_j(B)=2^{j+1}B \backslash 2^j B$. 

%\subsection*{Weights}
%%%%%%%%%%%%%%%%%%%%%%%%%%%%%%%%%%%%%%%%%%%%%%%%
%
%By a weight $w$ we mean a non-negative, locally integrable function.
%For brevity, we will often write $dw$ for $w\,dx$. In particular, we write $w(E)=\int_E\,dw$ and $L^p(w)=L^{p}(\R^n,dw)$.  We will use the following notation for averages:   given a set $E$ such that
%$0<w(E)<\infty$,
%%
%\[ \fint_E f\,dw = \frac{1}{w(E)}\int_E f\,dw, \]
%or, if $0<|E|<\infty$,
%\[ \fint_E f\,dx = \frac{1}{|E|}\int_E f\,dx. \]
%Abusing slightly the notation, for  $j\geq 1$, we set
%$$
%\fint_{C_j(B)} f\, dw= \frac{1}{w(2^{j+1}B)} \int_{C_j(B)} f\,dw.
%$$
%
%
%
%
\section{Preliminaries and Auxiliary results}\label{section:off}
Throughout the paper the letters $c$, $C$, $\theta$ or $\widetilde{\theta}$ will represent any harmless constant that will not depend on any relevant parameter of the corresponding computation, and that will may be different  in both sides of an inequality. Moreover, 
we shall write $\nabla=(\partial_{x_1},\dots\partial_{x_n})$ to denote the gradient in $\R^n$. We will typically write the elements in  $\R^{n+1}_+$ as $(x,t)$ with $x\in\R^n$ and $t>0$ and hence $\nabla_{x,t}=(\nabla,\partial t)$ stands for the gradient in $\R^{n+1}$.
Given a ball $B$, we use the notation $C_1(B)=4B$ and  $C_j(B)=2^{j+1}B\backslash 2^jB$ for $j\geq 2$. If $w\in A_{\infty}$, we write
$$
\fint_B hdw=\frac{1}{w(B)}\int_B hdw,\qquad \fint_{C_j(B)} hdw=\frac{1}{w(2^{j+1}(B))} \int_{C_j(B)} hdw.
$$

We recall the notion of full off-diagonal estimates:
\begin{definition}
Let $\{T_t\}_{t>0}$ be a family of sublinear operators and let $1\leq p\leq q\leq \infty$. We say that $\{T_t\}_{t>0}$ satisfies 
$L^p$-$L^q$ full off-diagonal estimates, denoted by $T_t\in \mathcal F(L^p(\R^n)-L^q(\R^n))$, if there exists a constant $c>0$ such that 
for all closed sets $E$ and $F$, all $f$ and all $t>0$ we have 
\begin{equation}\label{off:full}
\|T_t(\chi_E f)\chi_F\|_{L^q(\R^n)}
\lesssim 
t^{-\frac{1}{2}(\frac{n}{p}-\frac{n}{q})} e^{-\frac{cd(E,F)^2}{t}} \|f\chi_E\|_{L^p(\R^n)},
\end{equation}
where $d(E,F)=\inf\{|x-y|:x\in E, y\in F\}$.
\end{definition}
\begin{definition}
Given $1\leq p\leq q\leq \infty$ and any weight $w\in A_{\infty}$. We say that a family of sublinear operators $\{T_t\}_{t>0}$ satisfies 
$L^p$-$L^q$ off-diagonal estimates on balls, denoted by $T_t\in \mathcal O(L^p(w)-L^q(w))$, if there exist $\theta_1,\theta_2>0$ and $c>0$ such that for all $t>0$ and for any ball $B$ with radius $r_B$, 
\begin{equation}\label{off:BtoB}
\br{\fint_B \abs{T_t(f \chi_B)}^q dw}^{\frac1q} \lesssim \Upsilon\br{\frac{r_B}{\sqrt t}}^{\theta_2} \br{\fint_B |f|^p dw}^{\frac1p},
\end{equation}
and, for all $j\geq 2$,
\begin{equation}\label{off:CtoB}
\br{\fint_B \abs{T_t(f \chi_{C_j(B)})}^q dw}^{\frac1q} 
\lesssim 2^{j\theta_1} \Upsilon\br{\frac{2^j r_B}{\sqrt t}}^{\theta_2} e^{-\frac{c4^j r_B^2}{t}} \br{\fint_{C_j(B)} |f|^p dw}^{\frac1p},
\end{equation}
and
\begin{equation}\label{off:BtoC}
\br{\fint_{C_j(B)} \abs{T_t(f \chi_B)}^q dw}^{\frac1q} 
\lesssim 2^{j\theta_1} \Upsilon\br{\frac{2^j r_B}{\sqrt t}}^{\theta_2} e^{-\frac{c4^j r_B^2}{t}} \br{\fint_B |f|^p dw}^{\frac1p}.
\end{equation}
\end{definition}
In the previous definition $\Upsilon(s)$ is defined as $\Upsilon(s):=\max\{s,s^{-1}\}$ for $s>0$.
Besides note that if $q=\infty$ we would consider the corresponding $L^{\infty}(\R^n)=L^\infty(w)$ norms, in place of the $L^q(w)$-norms.

For the following results we recall that $(p_-(L), p_+(L))$  (resp.~$(q_-(L), q_+(L))$) denotes  the maximal open interval on which $\{e^{-tL}\}_{t>0}$ (resp. $\{\sqrt t \nabla e^{-tL}\}_{t>0}$) is uniformly bounded on $L^p(\R^n)$.

\begin{proposition}[{\cite[Chapter 3]{Au07}}]\label{prop:full-off}
	\ 
	
\begin{list}{$(\theenumi)$}{\usecounter{enumi}\leftmargin=.8cm \labelwidth=.8cm \itemsep=0.1cm \topsep=.2cm \renewcommand{\theenumi}{\roman{enumi}}}

\item If $p_-(L)<p\leq q<p_+(L)$, then $\{(tL)^m e^{-tL}\}_{t>0} \in \mathcal F(L^p-L^q)$ for every $m\in \mathbb N$.

\item If $q_-(L)<p\leq q<q_+(L)$,  then $\{\sqrt t \nabla (tL)^m e^{-tL}\}_{t>0} \in \mathcal F(L^p-L^q)$ for every $m\in \mathbb N$. In addition, we have $p_-(L)=q_-(L)$ and $(q_-(L))^{*} \leq p_+(L)$. 

\item $p_-(L)$, $q_+(L)\le (q_+(L))^*\le p_+(L)$, and $q_+(L)>2$.

\item $p_-(L)=1$ and $p_+(L)=\infty$ if $n=1, 2$; $q_+(L)=\infty$ if $n=1$; and $p_-(L)<\frac{2n}{n+2}$, $p_+(L)>\frac{2n}{n-2}$ if $n\ge 3$.

\end{list}
\end{proposition}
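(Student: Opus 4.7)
The strategy is to rewrite $\wt\Grm_{\mathrm H} f$ as an operator applied to $\nabla f$ by ``integrating by parts'' inside the operator via the identity $Lf=-\divv(A\nabla f)$, and then invoke the weighted off-diagonal and extrapolation machinery of \cite{AMI07, AMII07} refined in \cite{MaPAI17, MaPAII17, PA17}. Concretely, for $f\in\Scal$,
\[
t^2\nabla L e^{-t^2L}f = -t^2\nabla e^{-t^2L}\divv(A\nabla f),
\]
so $\wt\Grm_{\mathrm H} f(x) = \mathcal{Q}(A\nabla f)(x)$, where $\mathcal{Q}\vec{g}(x):=\bigl(\int_0^\infty |t^2\nabla e^{-t^2L}\divv\vec{g}(x)|^2\,\frac{dt}{t}\bigr)^{1/2}$. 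Since $A\in L^\infty$, the claim reduces to showing $\|\mathcal{Q}\vec{g}\|_{L^q(w)}\lesssim\|\vec{g}\|_{L^q(w)}$ for every vector field $\vec{g}$.

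\textbf{$L^2$ bound and off-diagonal estimates for $\mathcal{Q}$.} The unweighted $L^2$ estimate is a consequence of Kato's theorem: a Plancherel / $H^\infty$-functional-calculus identity gives
\[
\int_0^\infty\|t^2\nabla L e^{-t^2L}f\|_{L^2(\R^n)}^2\,\tfrac{dt}{t} \;\sim\; \|\sqrt{L}f\|_{L^2(\R^n)}^2 \;\sim\; \|\nabla f\|_{L^2(\R^n)}^2.
\]
For off-diagonal bounds I factor
\[
t^2\nabla e^{-t^2L}\divv \;=\; \bigl(t\nabla e^{-t^2L/3}\bigr)\cdot e^{-t^2L/3}\cdot\bigl(t\,e^{-t^2L/3}\divv\bigr);
\]
Proposition~\ref{prop:full-off}(ii) supplies $L^2$-$L^2$ full off-diagonal bounds for the outer gradient factor (as $2\in(q_-(L),q_+(L))$), the identity $t\,e^{-sL}\divv=-(t\nabla e^{-sL^*})^*$ together with the same proposition applied to $L^*$ handles the divergence factor by duality, and the middle heat factor provides the Gaussian decay needed to fuse the three. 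The upshot is an $L^2$-$L^2$ full off-diagonal estimate with Gaussian decay for $\{t^2\nabla e^{-t^2L}\divv\}_{t>0}$.

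\textbf{Weighted extrapolation and Sobolev improvement.} Equipped with the $L^2$ bound and the off-diagonal estimates on $\mathcal{Q}$, I will apply the Auscher--Martell extrapolation scheme of \cite{AMI07, AMII07} — or equivalently the weighted Hardy-space version of \cite{MaPAI17, MaPAII17, PA17} — to obtain $\|\mathcal{Q}\vec{g}\|_{L^q(w)}\lesssim\|\vec{g}\|_{L^q(w)}$ for $q$ in the natural range $\mathcal{W}_w(\wh{q}_-(L),\wh{q}_+(L))=(r_w\wh{q}_-(L),\wh{q}_+(L)/s_w)$. To sharpen the lower endpoint to $\tfrac{nr_w\wh{q}_-(L)}{nr_w+\wh{q}_-(L)}$, I exploit the fact that the right-hand side involves one derivative of $f$: a weighted Sobolev--Poincaré inequality with exponent $nr_w$ (available since $w\in A_{r_w}$) allows one to perform the Calderón--Zygmund decomposition of the extrapolation step at the level of $f$ rather than $\nabla f$. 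This buys one order of regularity and shifts the lower endpoint down by the Sobolev gap $1/(nr_w)$.

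\textbf{Main obstacle.} The delicate point is achieving the two endpoints simultaneously and sharply in the weighted setting: the upper endpoint $\wh{q}_+(L)$ (rather than the smaller $q_+(L)$) requires tracking the weight parameters $r_w,s_w$ through the off-diagonal composition of gradient semigroups and using the weighted boundedness of $\{\sqrt{t}\nabla e^{-tL}\}_{t>0}$, while the Sobolev-improved lower endpoint forces a Calderón--Zygmund decomposition at the function level, with Poincaré constants tuned by $r_w$. Arranging one decomposition that accommodates both improvements — and verifying that the Gaussian off-diagonal decay from Step 2 absorbs the error terms produced by each — is the principal technical burden.
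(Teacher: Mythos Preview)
Your proposal does not address the stated proposition at all. Proposition~\ref{prop:full-off} is a collection of structural facts about the unweighted semigroup and its gradient --- full $L^p$--$L^q$ off-diagonal estimates for $\{(tL)^m e^{-tL}\}$ and $\{\sqrt t\,\nabla(tL)^m e^{-tL}\}$, the identity $p_-(L)=q_-(L)$, and the dimensional ranges for $p_\pm(L)$, $q_+(L)$ --- all of which are quoted from \cite[Chapter~3]{Au07} and not proved in the paper. There is no ``paper's own proof'' to compare against: this is an imported black box. What you have written is instead a sketch of a proof of Theorem~\ref{thm:wtG} (the weighted bound for the inhomogeneous square function $\wt\Grm_{\mathrm H}$). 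You even invoke Proposition~\ref{prop:full-off}(ii) as an ingredient, which makes the mismatch explicit.

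If your intent was really Theorem~\ref{thm:wtG}, then your route is genuinely different from the paper's. The paper does not rewrite $t^2\nabla L e^{-t^2L}f$ as $-t^2\nabla e^{-t^2L}\divv(A\nabla f)$ and extrapolate a vector-valued square function; instead it passes from the vertical operator $\wt\Grm_{\mathrm H}$ to the conical square function $\Grm_{\mathrm H}(\sqrt L f)$ via \cite{PA18}, then through $\Scal_{1,\mathrm H}$ and $\Scal_{2,\mathrm H}$ to the Riesz-transform Hardy-space characterization of \cite{PA17}, and only afterwards pushes the upper endpoint to $\wh q_+(L)$ using Lemma~\ref{lem:wVSF} together with the reverse Riesz estimate of Theorem~\ref{thm:wRR}. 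Your outline is plausible in spirit, but two points are underspecified: the claimed range $(r_w\wh q_-(L),\wh q_+(L)/s_w)$ from naive extrapolation does not by itself reach the sharp upper endpoint $\wh q_+(L)$, and the ``Sobolev improvement via Calder\'on--Zygmund at the function level'' needs the actual Poincar\'e/CZ machinery of Lemma~\ref{lemma:C-Z-decomposition} spelled out, not just asserted. In any case, none of this bears on Proposition~\ref{prop:full-off}.
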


\begin{proposition}[{\cite[Proposition 5.9]{AMII07}}] \label{prop:weightedOD}
 Let $w\in A_{\infty}$.

\begin{list}{$(\theenumi)$}{\usecounter{enumi}\leftmargin=.8cm \labelwidth=.8cm \itemsep=0.1cm \topsep=.2cm \renewcommand{\theenumi}{\roman{enumi}}}
\item Assume $\mathcal{W}_w(p_-(L),p_+(L))\neq \emptyset$. There exists a maximal open interval denoted by $(\wh p_-(L),\wh p_+(L))$ containing 
$\mathcal{W}_w(p_-(L),p_+(L))$ such that if $\wh p_-(L)<p\le q<\wh p_+(L)$, then  $\{(tL)^m e^{-tL}\}_{t>0} \in \mathcal O(L^p(w)-L^q(w))$ and is a bounded set in $\mathcal L(L^p(w))$.

\item Assume $\mathcal{W}_w(q_-(L),q_+(L))\neq \emptyset$. There exists a maximal open interval denoted by $(\wh q_-(L),\wh q_+(L))$ containing 
$\mathcal{W}_w(q_-(L),q_+(L))$  such that if $\wh q_-(L)<p\le q<\wh q_+(L)$, then $\{\sqrt t\nabla (tL)^m e^{-tL}\}_{t>0} \in \mathcal O(L^p(w)-L^q(w))$ and is a bounded set in $\mathcal L(L^p(w))$.

\item $\wh p_-(L)=\wh q_-(L)$ and $(\wh q_+(L))_w^{*} \leq \wh p_+(L)$. 
\end{list}
\end{proposition}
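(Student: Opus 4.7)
The plan is to derive Proposition \ref{prop:weightedOD} from the unweighted bounds of Proposition \ref{prop:full-off} via the self-improvement scheme of \cite{AMI07, AMII07}: convert each pair of admissible exponents in Proposition \ref{prop:full-off} into a range of weighted exponents dictated by the Muckenhoupt/reverse-H\"older classes to which $w$ belongs, and then take the union of these ranges to define the claimed maximal intervals $(\wh p_-(L),\wh p_+(L))$ and $(\wh q_-(L),\wh q_+(L))$.

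The core technical step is the following extrapolation lemma: if $\{T_t\}_{t>0}\in\mathcal F(L^{p_0}(\R^n)-L^{q_0}(\R^n))$, $p_0\le p\le q\le q_0$, and $w\in A_{p/p_0}\cap RH_{(q_0/q)'}$, then $\{T_t\}_{t>0}\in\mathcal O(L^p(w)-L^q(w))$. To prove it I would fix a ball $B$ of radius $r_B$, decompose the input as $\sum_j f\chi_{C_j(B)}$ (the contribution of $j=1$ giving \eqref{off:BtoB} and the contribution of $j\ge 2$ giving \eqref{off:CtoB}), apply \eqref{off:full} to each annular pair, and then convert the unweighted $L^{p_0}$-mean on $C_j(B)$ into a weighted $L^p(w)$-mean using Muckenhoupt's $A_{p/p_0}$ condition on the input side, while using the reverse H\"older condition $RH_{(q_0/q)'}$ to pass from the weighted $L^q(w)$-mean on $B$ to the unweighted $L^{q_0}$-mean on the output side. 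The Gaussian kernel $e^{-c\,\mathrm{dist}(E,F)^2/t}$ descends unchanged to the $e^{-c\,4^j r_B^2/t}$ factor, and the $A_p$/$RH_s$ characteristics are absorbed into the $\Upsilon$-factor. Estimate \eqref{off:BtoC} is treated symmetrically. Applying this lemma to $T_t=(tL)^m e^{-tL}$ for every pair $p_-(L)<p_0\le q_0<p_+(L)$ (using Proposition \ref{prop:full-off}(i)), and using the identification \eqref{intervalrs}, I obtain (i) on
\[
(\wh p_-(L),\wh p_+(L)):=\bigcup_{p_-(L)<p_0\le q_0<p_+(L)} \mathcal{W}_w(p_0,q_0)=\br{r_w\,p_-(L),\ p_+(L)/s_w},
\]
where the last equality uses the openness of the $A_p$ and $RH_s$ classes. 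Part (ii) is produced identically from $T_t=\sqrt t\,\nabla(tL)^m e^{-tL}$ and Proposition \ref{prop:full-off}(ii), yielding $(\wh q_-(L),\wh q_+(L))=(r_w\,q_-(L),\,q_+(L)/s_w)$. The uniform $L^p(w)$-boundedness on these intervals then follows from the $\mathcal O$-estimates by a standard geometric summation over annuli.

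Part (iii) reduces to arithmetic: $\wh p_-(L)=\wh q_-(L)$ is immediate from $p_-(L)=q_-(L)$ in Proposition \ref{prop:full-off}(ii) together with the formulas just obtained, while $(\wh q_+(L))_w^{*}\le\wh p_+(L)$ follows by combining the unweighted Sobolev bound $(q_+(L))^{*}\le p_+(L)$ with the definition of the weighted Sobolev exponent from \cite{AMII07}, unwinding the factor $s_w^{-1}$ common to $\wh q_+(L)$ and $\wh p_+(L)$. Maximality of the intervals is shown by contradiction: if the $\mathcal O$-estimates extended past a claimed endpoint, the openness of $A_p\cap RH_s$ would produce an admissible pair $(p_0,q_0)$ outside $(p_-(L),p_+(L))$ for which the unweighted Proposition \ref{prop:full-off} must then hold, contradicting the maximality declared there. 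The main obstacle is the extrapolation lemma itself: the annular decomposition, the interplay between the exponential decay and the Muckenhoupt/reverse-H\"older factors, and the need to recast all weight characteristics as a single $\Upsilon$-factor must be done carefully enough to preserve both the Gaussian decay and the sharp compatibility \eqref{intervalrs} between the weight and the exponents.
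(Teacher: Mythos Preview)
The paper does not prove this proposition at all: it is quoted verbatim from \cite[Proposition 5.9]{AMII07}, so there is no in-paper argument to compare against. That said, your sketch contains a genuine error in how you interpret the statement.

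Your extrapolation lemma (full $L^{p_0}$--$L^{q_0}$ off-diagonal estimates plus $w\in A_{p/p_0}\cap RH_{(q_0/q)'}$ imply weighted $\mathcal O(L^p(w)$--$L^q(w))$ estimates) is correct and is exactly how one shows that the weighted estimates hold on $\mathcal W_w(p_-(L),p_+(L))$. But you then \emph{identify} the maximal interval $(\wh p_-(L),\wh p_+(L))$ with $\mathcal W_w(p_-(L),p_+(L))=(r_w p_-(L),\,p_+(L)/s_w)$, and this is not what the proposition asserts. The interval $(\wh p_-(L),\wh p_+(L))$ is defined \emph{intrinsically} as the maximal open interval on which the weighted off-diagonal estimates (equivalently, uniform $L^p(w)$-boundedness) hold; the proposition only claims the containment $\mathcal W_w\subset(\wh p_-,\wh p_+)$, and the paper itself explicitly allows for strict containment (see the passage in the proof of Theorem~\ref{thm:wtG} reading ``assuming that $q_+(L)/s_w<\widehat q_+(L)$, otherwise there is nothing to prove''). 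Your maximality-by-contradiction step is therefore both unnecessary and wrong: weighted $\mathcal O$-estimates for some $p$ outside $\mathcal W_w$ do not manufacture unweighted full off-diagonal estimates for exponents outside $(p_-(L),p_+(L))$, so no contradiction with Proposition~\ref{prop:full-off} arises.

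Because of this, your treatment of part (iii) collapses. The equalities $\wh p_-(L)=\wh q_-(L)$ and $(\wh q_+(L))_w^*\le \wh p_+(L)$ cannot be read off from the formulas $r_w p_-(L)$, $r_w q_-(L)$, $p_+(L)/s_w$, $q_+(L)/s_w$, since those are only lower (resp.\ upper) bounds for the hatted endpoints. In \cite{AMII07} these relations are proved at the level of the maximal intervals themselves, by arguments parallel to the unweighted ones in Proposition~\ref{prop:full-off}: one shows directly that weighted $L^p(w)$-boundedness of $\sqrt t\,\nabla e^{-tL}$ forces that of $e^{-tL}$ (and conversely at the lower endpoint), and that a weighted Sobolev-type gain passes from $\wh q_+(L)$ to $(\wh q_+(L))_w^*$ for the semigroup. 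Your arithmetic shortcut does not substitute for these operator-theoretic steps.
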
 
In the above proposition $({q}_+(L))^{*} $ and  $(\widehat{q}_+(L))_w^{*} $ are defined as follows: for all $0<q<\infty$, 
\begin{align}\label{q_wstar}
(q)_w^*:=\begin{cases}\frac{qnr_w}{nr_w-q}, & nr_w>q,
\\
\infty, & \textrm{otherwise},
\end{cases}
\quad \textrm{and}\quad
(q)^*:=\begin{cases}\frac{qn}{n-q}, & n>q,
\\
\infty, & \textrm{otherwise}.
\end{cases}
\end{align}

%%%%

The following result, which appears in a more general way in \cite[Proposition 4.1]{PA17}, and it is a weighted version of \cite[(5.12)]{MaPAII17} (see also \cite{HMay09}), contains some off-diagonal estimates for the family $\{\mathcal{T}_{t,s}\}_{s,t>0}:=\{(e^{-t^2L}-e^{-(t^2+s^2)L})^M\}_{s,t>0}$, where $M\in \N$.

\begin{proposition}\label{prop:lebesgueoff-dBQ}
Let $w\in A_{\infty}$, $\mathcal{W}_w(p_-(L),p_+(L))\neq \emptyset$, $p\in(\widehat{p}_-(L),\widehat{p}_+(L))$, and let $0<t,s<\infty$. Given $M\in \N$, for all sets $E_1,E_2\subset \R^n$ and $f\in L^p(w)$ such that $\supp (f)\subset  E_1$,  we have that $\{\mathcal{T}_{t,s}\}_{s,t>0}:=\{(e^{-t^2L_w}-e^{-(t^2+s^2)L_w})^M\}_{s,t>0}$ satisfies the following $L^p(w)$-$L^p(w)$ off-diagonal estimates: 
\begin{align}\label{AB}
\left\|\chi_{E_2}\mathcal{T}_{t,s}f\right\|_{L^{p}(w)}
\lesssim
\left(\frac{s^2}{t^2}\right)^M
e^{-c\frac{d({E}_1,{E}_2)^2}{t^2+s^2}}\|f\chi_{E_1}\|_{L^{p}(w)}.
\end{align}
In particular, 
\begin{equation}\label{boundednesstsr}
\|\mathcal{T}_{t,s}f\|_{L^{p}(w)}\lesssim \left(\frac{s^2}{t^2}\right)^M\|f\|_{L^{p}(w)}.
\end{equation}
\end{proposition}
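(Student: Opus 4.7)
The plan is to reduce Proposition \ref{prop:lebesgueoff-dBQ} to the weighted off-diagonal estimates on balls for the family $\{(\tau L)^M e^{-\tau L}\}_{\tau>0}$ supplied by Proposition \ref{prop:weightedOD}(i). First, using the semigroup identity $e^{-t^2L}-e^{-(t^2+s^2)L}=\int_0^{s^2} L\, e^{-(t^2+u)L}\,du$ and expanding the $M$-th power (all the operators commute as functions of $L$), I would write
$$
\mathcal{T}_{t,s}f = \int_0^{s^2}\!\!\cdots\!\int_0^{s^2} L^M e^{-\tau L}f\,du_1\cdots du_M,
$$
with $\tau:=Mt^2+u_1+\cdots+u_M \in [Mt^2,\,M(t^2+s^2)]$, so that the integrand equals $\tau^{-M}(\tau L)^M e^{-\tau L}f$ and $\tau$ is comparable to $t^2+s^2$ on the whole range of integration.

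Next, by Proposition \ref{prop:weightedOD}(i), for $p\in(\wh p_-(L),\wh p_+(L))$ the family $\{(\tau L)^M e^{-\tau L}\}_{\tau>0}$ lies in $\mathcal O(L^p(w)-L^p(w))$, hence satisfies the ball bounds \eqref{off:BtoB}--\eqref{off:BtoC}. The heart of the proof is then to upgrade these estimates on balls to estimates between \emph{arbitrary} subsets $E_1, E_2\subset\R^n$ with a genuine Gaussian decay $e^{-cd(E_1,E_2)^2/\tau}$. I would do this via a covering argument: cover $E_2$ by a bounded-overlap family of balls $\{B_k\}$ with $r_{B_k}=\sqrt{\tau}$, decompose $\chi_{E_1}f$ according to its restrictions to $B_k$ and to the annuli $C_j(B_k)$, apply \eqref{off:CtoB} (or \eqref{off:BtoB} when $j=1$) to each piece, and sum over $k$ and $j$. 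The support condition $\supp f\subset E_1$ forces only indices $j$ with $2^j\sqrt{\tau}\gtrsim d(E_1,E_2)$ to contribute meaningfully, and together with the exponential factor $e^{-c4^j}$ in \eqref{off:CtoB} this yields the desired Gaussian.

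Finally, inserting this Gaussian-type bound into the integral representation and using $\tau\ge Mt^2$ to estimate $\tau^{-M}\le (Mt^2)^{-M}$ together with $\tau\le M(t^2+s^2)$ to estimate $e^{-cd(E_1,E_2)^2/\tau}\le e^{-c'd(E_1,E_2)^2/(t^2+s^2)}$, and then integrating over the cube $[0,s^2]^M$ of volume $s^{2M}$, produces the prefactor $(s^2/t^2)^M$ and establishes \eqref{AB}. The bound \eqref{boundednesstsr} is then the specialisation $E_1=E_2=\R^n$.

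The main obstacle is the covering step that promotes ball off-diagonal bounds to off-diagonal bounds between arbitrary sets with a true Gaussian decay in $d(E_1,E_2)^2/\tau$. The subtlety is that the weighted $L^p(w)$-norms of characteristic functions do not factorise cleanly across annuli, so one must carefully balance the doubling growth $w(2^{j+1}B_k)\lesssim 2^{jnr_w}w(B_k)$ from \eqref{doublingcondition} against the Gaussian factor $e^{-c4^j}$ from \eqref{off:CtoB} when summing the geometric series. This is where the bulk of the technical bookkeeping lies, and is precisely the content of the more general \cite[Proposition~4.1]{PA17}.
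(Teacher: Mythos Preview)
The paper does not supply its own proof of this proposition: it is stated as a consequence of \cite[Proposition~4.1]{PA17} (with the unweighted antecedent \cite[(5.12)]{MaPAII17}), and no argument is given in the text. Your outline---writing $\mathcal{T}_{t,s}$ as an $M$-fold integral of $\tau^{-M}(\tau L)^M e^{-\tau L}$ with $\tau\approx t^2+s^2$, invoking the weighted ball off-diagonal estimates of Proposition~\ref{prop:weightedOD}(i), and then upgrading those to estimates between arbitrary closed sets via a covering by balls of radius $\sqrt{\tau}$---is a correct sketch and is indeed the strategy underlying the cited reference, as you yourself note in your last paragraph. So your proposal is consistent with what the paper relies on; there is no alternative argument in the paper to compare against.
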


%%%%

We introduced before the Hardy-Littlewood maximal function and here we present some weighted maximal version which will be used throughout the paper. Given $w\in A_\infty$ and  $0<q<\infty$ we set 
\begin{align}\label{weightedmaximal}
\mathcal{M}_q^w f(x):=\sup_{B\ni x} \left(\fint_B|f(y)|^q\,dw\right)^{\frac{1}{q}}, \quad 1\leq q<\infty.
\end{align}
Since  $w\in A_\infty$ implies that $w$ is a doubling measure (see \eqref{doublingcondition}) then $\mathcal{M}_q^w $ is bounded on $L^p(w)$ for every $q<p\le \infty$ and is bounded from $L^q(w)$ to $L^{q,\infty}(w)$.

%\begin{remark}\label{rem:stable}
% Since off-diagonal estimates on balls are stable under composition (see \cite[Theorem 2.3]{AMII07}), it follows form the above proposition that  $t^{3/2}\nabla L e^{-t L}\in \mathcal O(L^p(w)-L^q(w))$ for $\wh q_-(L)<p\le q<\wh q_+(L)$. By \cite[Proposition 2.6]{AMII07},  if $v\in A_{p/\wh q_-(L)}(w) \cap RH_{(\wh q_+(L)/q)'}(w)$, then $t^{3/2}\nabla L e^{-t L_w}\in \mathcal O(L^p(vdw)-L^q(vdw))$.
%\end{remark}

%Finally we give the definition and some known results of weighted Hardy spaces in order to ease  the reading of the proofs later on.
%\begin{definition}
%Given a sublinear operator $\mathcal{T}$  acting on functions of $L^q(w)$ 
%we define the 
%weighted Hardy space $H^{p}_{\mathcal{T},q}(w)$  as the completion of the set
%\begin{align*}%\label{defw}
%\mathbb{H}^{p}_{\mathcal{T},q}(w):=\left\{f\in L^{q}(w):\mathcal{T}f\in L^p(w)\right\},
%\end{align*}
%with respect to the semi-norm 
% $
%\|f\|_{\mathbb{H}^{p}_{\mathcal{T},q}(w)}:=\|\mathcal{T}f\|_{L^p(w)}.
%$
%\end{definition} 
%In particular we shall use Hardy spaces for $\mathcal{T}=%\Scal_{2,\hh}$, see the definition of this operator in %\eqref{conicalheat}.

%In \cite{MPAII17} and in \cite{PA17} the second and third 
%named

%%%%%%%%%%%%%%%%%%%%%%%%%%%%%%%%%%%%%%%%%%%%%%%%%%%%%%%%%%%%%%%%%

The following result contains a Calderón-Zygmund decomposition where the function is split according to the level sets of its gradient and to its norm in weighted Lebesgue spaces.

\begin{lemma}[{\cite[Lemma 6.6]{AMIII06}, \cite[Proposition 9.1]{AMI07}}]\label{lemma:C-Z-decomposition}
Let $n\geq 1$, $w\in A_{\infty}$, $\mu:=wdx$, and $r_w<p_0<\infty$ (with the possibility of taking $p_0=1$ if $r_w=1$). Assume that $h\in \mathcal{S}$, and let $\alpha>0$. Then, one can find a collection of balls $\{B_i\}_{i\in \N}$ (with radii $r_{B_i}$), smooth functions $b_i$, and a function $g\in L^1_{loc}(w)$ such that
$$
h=g+\sum_{i\in \N}b_i,\qquad \text{in $L^{p_0}(w)$},
$$
and the following properties hold
\begin{align}\label{C-Z:g}
|\nabla g(x)|\leq C\alpha, \textrm{ for } \mu\textrm{-a.e. } x,
\end{align}

\begin{align}\label{C-Z:b}
\supp b_i\subset B_i\quad \textrm{and}\quad \int_{B_i}|\nabla b_i(x)|^{p_0}w(x)dx\leq C\alpha^{p_0}w(B_i),
\end{align}

\begin{align}\label{C-Z:sum}
\sum_{i\in \N} w(B_i)\leq \frac{C}{\alpha^{p_0}}\int_{\R^n}|\nabla h(x)|^{p_0}w(x)dx,
\end{align}

\begin{align}\label{C-Z:sumoverlap}
\sum_{i\in \N}\chi_{B_i}\leq N,
\end{align}
where $C$ and $N$ depend only on the dimension, the doubling constant of $\mu$, and $p_0$. In addition, for $1\leq q<(p_0)_w^*$, where $(p_0)_w^*$ is defined in \eqref{q_wstar}, we have
\begin{align}\label{C-Z:extrab}
\left(\dashint_{B_i}|b_i(x)|^qdw\right)^{\frac{1}{q}}\lesssim \alpha r_{B_i}.
\end{align}
\end{lemma}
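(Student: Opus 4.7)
\medskip

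\textbf{Proof plan for Lemma \ref{lemma:C-Z-decomposition}.} The strategy is the classical Calderón–Zygmund decomposition adapted to the gradient, carried out in the weighted space of homogeneous type $(\R^n, |\cdot|, w\,dx)$; the novelty with respect to the unweighted case is that one works with the weighted maximal function and relies on the openness of Muckenhoupt classes.

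First I would define the ``bad set'' at level $\alpha$. Since $p_0>r_w$ (or $p_0=1=r_w$), one has $w\in A_{p_0}$, and the uncentered weighted Hardy–Littlewood maximal operator
$
\mathcal M_w f(x):=\sup_{B\ni x}\fint_B|f|\,dw
$
is of weak type $(1,1)$ on $L^1(w)$ (equivalently, $A_\infty$ doubling gives a space of homogeneous type). Set
\[
\Omega_\alpha:=\bigl\{x\in\R^n:\mathcal M_w(|\nabla h|^{p_0})(x)>\alpha^{p_0}\bigr\}.
\]
Then the weak-type inequality immediately yields
$w(\Omega_\alpha)\lesssim \alpha^{-p_0}\int|\nabla h|^{p_0}\,dw$,
which will feed into \eqref{C-Z:sum}. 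Since $h\in\mathcal{S}$, $\Omega_\alpha$ is open and proper.

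Next I would perform a Whitney covering of $\Omega_\alpha$ with Euclidean balls $\{B_i\}_i$ satisfying $B_i\subset\Omega_\alpha$, $r_{B_i}\simeq\mathrm{dist}(B_i,\Omega_\alpha^c)$, bounded overlap $\sum_i\chi_{B_i}\le N$ (yielding \eqref{C-Z:sumoverlap}), and $\bigcup_i B_i=\Omega_\alpha$; combined with the measure bound above this gives \eqref{C-Z:sum}. Associate to $\{B_i\}$ a smooth partition of unity $\{\varphi_i\}$ with $\supp\varphi_i\subset B_i$, $0\le\varphi_i\le 1$, $|\nabla\varphi_i|\le C/r_{B_i}$ and $\sum_i\varphi_i=\chi_{\Omega_\alpha}$. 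Put
\[
b_i:=(h-c_i)\,\varphi_i,\qquad c_i:=\frac{\int h\,\varphi_i\,dw}{\int\varphi_i\,dw},\qquad g:=h-\sum_i b_i=h\,\chi_{\Omega_\alpha^c}+\sum_i c_i\,\varphi_i,
\]
where the choice of $c_i$ ensures the mean-value property $\int b_i\,dw=0$, which will be convenient. The locally finite character of the sum and the $L^{p_0}(w)$-bounds below will give $h=g+\sum_i b_i$ in $L^{p_0}(w)$.

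For the pointwise bound \eqref{C-Z:g} on $\nabla g$: on $\Omega_\alpha^c$, $\nabla g=\nabla h$ and Lebesgue differentiation with respect to $w$ gives $|\nabla h|^{p_0}\le \mathcal M_w(|\nabla h|^{p_0})\le\alpha^{p_0}$ a.e.; on $\Omega_\alpha$, using $\sum_i\nabla\varphi_i=0$ a.e.\ and the bounded overlap, one writes $\nabla g=\sum_i(c_i-c_{i_0})\nabla\varphi_i$ with $i_0$ a fixed index among the $\le N$ indices active near $x$. Standard Whitney/doubling arguments (comparable radii for overlapping Whitney balls, plus the Whitney property giving a point in $4B_i\cap\Omega_\alpha^c$) force $|c_i-c_{i_0}|\lesssim\alpha\,r_{B_i}$, yielding $|\nabla g|\lesssim\alpha$. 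The support statement in \eqref{C-Z:b} is by construction; the $L^{p_0}(w)$-bound on $\nabla b_i=(\nabla h)\varphi_i+(h-c_i)\nabla\varphi_i$ follows from the Whitney bound $\fint_{B_i}|\nabla h|^{p_0}dw\lesssim\alpha^{p_0}$ together with the weighted $(p_0,p_0)$-Poincaré inequality
\[
\fint_{B_i}|h-c_i|^{p_0}\,dw\lesssim r_{B_i}^{p_0}\,\fint_{B_i}|\nabla h|^{p_0}\,dw,
\]
valid since $w\in A_{p_0}$. For \eqref{C-Z:extrab} with $1\le q<(p_0)_w^*$, I would use the analogous weighted Sobolev–Poincaré inequality on $(\R^n,w\,dx)$, which upgrades the exponent on the left from $p_0$ to any $q<(p_0)_w^*$, giving
$\bigl(\fint_{B_i}|b_i|^q\,dw\bigr)^{1/q}\le\bigl(\fint_{B_i}|h-c_i|^q\,dw\bigr)^{1/q}\lesssim r_{B_i}\bigl(\fint_{B_i}|\nabla h|^{p_0}\,dw\bigr)^{1/p_0}\lesssim \alpha\,r_{B_i}$.

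The main technical obstacle is twofold. First, justifying the Whitney step that $|c_i-c_j|\lesssim\alpha\,r_{B_i}$ whenever $B_i\cap B_j\ne\emptyset$: this requires chaining weighted averages of $h$ over intermediate balls contained in $\Omega_\alpha$ and ultimately exiting to $\Omega_\alpha^c$ via the Whitney property, using doubling of $w$ and weighted Poincaré to convert averages of $|h-c|$ into averages of $|\nabla h|$; the $A_{p_0}$ hypothesis is exactly what makes this go through. Second, the weighted Sobolev–Poincaré improvement needed for \eqref{C-Z:extrab} up to exponents strictly below $(p_0)_w^*$: this is where the precise definition of $(p_0)_w^*$ in \eqref{q_wstar} enters, and one invokes the weighted Poincaré–Sobolev theory in spaces of homogeneous type (available for $w\in A_{p_0}$ by openness of $A_p$ and the standard Fefferman–Phong/Franchi–Pérez type estimates).
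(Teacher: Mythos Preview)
The paper does not supply its own proof of this lemma; it is simply quoted from \cite[Lemma~6.6]{AMIII06} and \cite[Proposition~9.1]{AMI07}. Your plan---Whitney decomposition of the superlevel set of the weighted maximal function of $|\nabla h|^{p_0}$, smooth partition of unity, and weighted Poincar\'e/Sobolev--Poincar\'e to control the bad pieces and their oscillations---is exactly the argument carried out in those references, so there is nothing substantive to compare.

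One small remark: in your sketch you write ``Whitney bound $\fint_{B_i}|\nabla h|^{p_0}\,dw\lesssim\alpha^{p_0}$'' as if it were immediate, but $B_i\subset\Omega_\alpha$, so this requires passing to the dilated ball $cB_i$ that meets $\Omega_\alpha^c$ and then using doubling to come back; you do invoke this mechanism later for the $c_i$-oscillation estimate, and the same device is what justifies the displayed bound. Also, for \eqref{C-Z:extrab} the exponent $(p_0)_w^*$ defined via $r_w$ (rather than $p_0$) comes precisely from the openness of $A_p$: since $p_0>r_w$ one may choose $r$ with $r_w<r<p_0$ and $w\in A_r$, and it is the $(p_0,q)$ weighted Sobolev--Poincar\'e for $w\in A_r$ that produces the range $q<(p_0)_w^*$. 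You allude to this via ``openness of $A_p$'', which is the right ingredient.
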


The estimates contained in the following auxiliary result follows easily from Hölder's inequality along with the definitions of the $A_p$ and $RH_s$ classes: 
\begin{lemma}\label{lemma:sin-con}
For every  $0<p\leq q<\infty$, every ball $B\subset\R^n$ and  every $j\geq 1$, there hold
\[
\bigg(\dashint_{C_j(B)}|f(x)|^{p}dx\bigg)^{\frac{1}{p}}
\lesssim
\bigg(\dashint_{C_j(B)}|f(x)|^{q}dw\bigg)^{\frac{1}{q}},\qquad \forall\,w\in A_{\frac{q}{p}},
\]
and 
\[
\bigg(\dashint_{C_j(B)}|f(x)|^{p}dw\bigg)^{\frac{1}{p}}
\lesssim
\bigg(\dashint_{C_j(B)}|f(x)|^{q}dx\bigg)^{\frac{1}{q}},
\qquad
\forall\, w\in RH_{\left(\frac{q}{p}\right)'},
\]
where the implicit constants are independent of $j$.
\end{lemma}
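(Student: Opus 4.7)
\textbf{Proof plan for Lemma \ref{lemma:sin-con}.} Both estimates reduce to Hölder's inequality on the annulus $C_j(B)$, combined with the weight condition applied on the enclosing ball $2^{j+1}B$. The key observation that lets us pass from the ball $2^{j+1}B$ (where the $A_p$ / $RH_s$ conditions are stated) to the annulus $C_j(B)$ is the purely dimensional comparability
\[
\frac{|C_j(B)|}{|2^{j+1}B|}=\frac{2^{(j+1)n}-2^{jn}}{2^{(j+1)n}}=\frac{2^n-1}{2^n},
\]
which is independent of $j$ and of the ball. Hence averages against Lebesgue measure on $C_j(B)$ differ from averages against Lebesgue measure on $2^{j+1}B$ by a bounded dimensional factor.

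For the first inequality, write $r=q/p\ge 1$, so $r'=q/(q-p)$ and $1-r'=-p/(q-p)$. Split $|f|^p=(|f|^p w^{p/q})\cdot w^{-p/q}$ and apply Hölder with exponents $r,r'$ on $C_j(B)$ to obtain
\[
\int_{C_j(B)}|f|^p\,dx\le\biggl(\int_{C_j(B)}|f|^q\,dw\biggr)^{p/q}\biggl(\int_{C_j(B)}w^{1-r'}\,dx\biggr)^{1/r'}.
\]
Dividing by $|C_j(B)|$ and rearranging, the desired inequality is equivalent to the bound
\[
\biggl(\fint_{C_j(B)} w\,dx\biggr)\biggl(\fint_{C_j(B)} w^{1-r'}\,dx\biggr)^{r-1}\lesssim 1.
\]
By the comparability of $|C_j(B)|$ and $|2^{j+1}B|$, both averages are comparable to those on $2^{j+1}B$, and the last display is precisely the defining condition of $w\in A_{q/p}$ on the ball $2^{j+1}B$. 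This gives the first inequality with an absolute constant.

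For the second inequality, set $s=(q/p)'$, so $1/s=1-p/q$. Applying Hölder with exponents $q/p$ and $s$ directly to $\int_{C_j(B)}|f|^p w\,dx$ yields
\[
\int_{C_j(B)}|f|^p w\,dx\le\biggl(\int_{C_j(B)}|f|^q\,dx\biggr)^{p/q}\biggl(\int_{C_j(B)}w^{s}\,dx\biggr)^{1/s}.
\]
For the second factor, monotonicity together with the $RH_s$ condition applied on $2^{j+1}B$ gives
\[
\biggl(\int_{C_j(B)}w^{s}\,dx\biggr)^{1/s}\le\biggl(\int_{2^{j+1}B}w^{s}\,dx\biggr)^{1/s}\lesssim[w]_{RH_s}\,|2^{j+1}B|^{-p/q}\,w(2^{j+1}B),
\]
since $1/s-1=-p/q$. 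Plugging back and dividing by $w(C_j(B))$, the claim reduces to verifying
\[
\frac{|C_j(B)|^{p/q}}{|2^{j+1}B|^{p/q}}\cdot\frac{w(2^{j+1}B)}{w(C_j(B))}\lesssim 1.
\]
The first factor is bounded by $1$. For the second, I use that $w\in RH_{(q/p)'}\subset A_\infty$, so $w$ satisfies the standard $A_\infty$ property: whenever $E\subset B$ with $|E|/|B|\ge\eta$, one has $w(E)/w(B)\ge\delta(\eta)>0$. Applied to $E=C_j(B)\subset 2^{j+1}B$ with $|E|/|B|=(2^n-1)/2^n\ge 1/2$, this gives $w(C_j(B))\gtrsim w(2^{j+1}B)$ with a constant independent of $j$, completing the proof.

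The only mildly non-routine step is the need to compare $w(C_j(B))$ with $w(2^{j+1}B)$ in the second inequality, since the annulus is not itself a ball; this is handled entirely by the self-improvement $RH_s\subset A_\infty$ together with the dimensional lower bound on $|C_j(B)|/|2^{j+1}B|$. The uniformity in $j$ in both inequalities is automatic because all weight constants are suprema over balls.
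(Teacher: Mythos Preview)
Your argument is correct and follows exactly the route the paper indicates (``H\"older's inequality along with the definitions of the $A_p$ and $RH_s$ classes''); the paper gives no further details, so your write-up is in fact more complete than what appears there.

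One remark that would streamline your second inequality: recall the paper's convention that
\[
\dashint_{C_j(B)} h\,dw \;=\; \frac{1}{w(2^{j+1}B)}\int_{C_j(B)} h\,dw,
\]
and similarly with Lebesgue measure. With this normalization the denominator on the left is already $w(2^{j+1}B)$, so after your H\"older step and the $RH_s$ bound on $2^{j+1}B$ the inequality closes immediately; the comparison $w(C_j(B))\gtrsim w(2^{j+1}B)$ via $A_\infty$ is then unnecessary. Your detour through that comparison is of course valid (and would be needed had the averages been normalized by $w(C_j(B))$), but with the stated convention both inequalities reduce purely to enlarging the domain of integration from $C_j(B)$ to $2^{j+1}B$ and invoking the $A_{q/p}$ or $RH_{(q/p)'}$ condition on that ball.
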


We also need the following  off-diagonal estimate on Sobolev spaces, which can be proved as  \cite[(4.6)]{Au07}. Nevertheless, we include the proof for the sake of completeness.

%Set $q^{*}=\frac{qn}{n-q}$, the Sobolev exponent of $q$, when $1\le q<n$ and $q^*=\infty$ otherwise. Given $w\in A_\infty$ define $q_w^{*}=\frac{qnr_w}{nr_w-q}$ when $q<nr_w$ and $q_w^{*}=\infty$, otherwise.  We also write $q_{w,*}=\frac{nr_w q}{nr_w+q}$ for any $q>0$. The following weighted Poincaré inequality was proved in \cite[Theorem 1.6]{FKS82}:

\begin{proposition}\label{prop:Gsum}
Let $w\in A_{\infty}$. Given $p$, $q$ with $\max\{r_w, \widehat{q}_-(L)\}<p\le q<\widehat{q}_+(L)$, $x\in \R^n$, $s>0$, $\lambda\ge 1$, and 
$H$ defined in $\R^{n+1}_+$ there hold
\[
\bigg(\fint_{B(x,\lambda s)} |\nabla e^{-s^2L}H(\cdot\,,s)|^q dw\bigg)^{\frac1q} 
\lesssim 
e^{-c\lambda^2}\sum_{j=1}^{\infty} e^{-c4^j}  \bigg(\fint_{B(x,2^{j+1}\lambda s)} |\nabla H(\cdot\,,s)|^{p} dw\bigg)^{\frac{1}{p}},
\]
whenever $\nabla H(\cdot,s)\in L^p(w)$
and
\[
\bigg(\fint_{B(x,\lambda s)} |\nabla e^{-s^2L}H(\cdot\,,s)|^q dw\bigg)^{\frac1q} 
\lesssim 
e^{-c\lambda^2}\sum_{j=1}^{\infty} e^{-c4^j}  \bigg(\fint_{B(x,2^{j+1}\lambda s)} |\nabla e^{-\frac{s^2}{2}L}H(\cdot\,,s)|^{p} dw\bigg)^{\frac{1}{p}},
\]
whenever $H(\cdot,s)\in L^p(w)$.
\end{proposition}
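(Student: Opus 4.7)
The proof proceeds by an annular decomposition around $B_0:=B(x,\lambda s)$, combining the $L^p(w)$--$L^q(w)$ off-diagonal estimates of Proposition~\ref{prop:weightedOD}(ii) for the family $\{\sqrt{s^2}\,\nabla e^{-s^2L}\}$ (available since $\max\{r_w,\wh q_-(L)\}<p\le q<\wh q_+(L)$), the weighted $(p,p)$-Poincar\'e inequality (which holds because $p>r_w$ forces $w\in A_p$), and the annihilation identity $\nabla e^{-s^2L}(c)=0$ for every constant $c$ (since $e^{-s^2L}(1)=1$). Writing $H$ for $H(\cdot,s)$, setting $c_j:=\fint_{2^{j+1}B_0}H\,dw$, and using the partition $\R^n=\bigcup_{j\ge 1}C_j(B_0)$, Abel summation together with $\nabla e^{-s^2L}(1)=0$ rewrites $\nabla e^{-s^2L}H$ as
\[
\sum_{j\ge 1}\nabla e^{-s^2L}\bigl((H-c_j)\chi_{C_j(B_0)}\bigr) \;-\; \sum_{j\ge 1}(c_{j+1}-c_j)\,\nabla e^{-s^2L}\bigl(\chi_{(2^{j+1}B_0)^c}\bigr).
\]

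For the first sum I would apply \eqref{off:CtoB} with $B=B_0$ and $t=s^2$ (for $j\ge 2$) and \eqref{off:BtoB} applied to $4B_0$ together with doubling (for $j=1$), producing prefactors of the form $2^{j\theta_1}\Upsilon(2^j\lambda)^{\theta_2}e^{-c4^j\lambda^2}/s$. For the second sum the complement $(2^{j+1}B_0)^c$ is further decomposed into the annuli $C_k(B_0)$, $k\ge j+1$, and \eqref{off:CtoB} again provides Gaussian decay $e^{-c4^k\lambda^2}$ on each. The weighted Poincar\'e inequality supplies
\[
\Bigl(\fint_{C_j(B_0)}|H-c_j|^p\,dw\Bigr)^{1/p}\lesssim 2^j\lambda s\Bigl(\fint_{2^{j+1}B_0}|\nabla H|^p\,dw\Bigr)^{1/p},
\]
and through chaining over concentric balls $|c_{j+1}-c_j|\lesssim 2^j\lambda s\bigl(\fint_{2^{j+2}B_0}|\nabla H|^p\,dw\bigr)^{1/p}$. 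Inserting these bounds, dividing by $s$, and absorbing the polynomial factors $2^{j\theta_1}\Upsilon(2^j\lambda)^{\theta_2}\cdot 2^j\lambda$ into the Gaussian $e^{-c4^j\lambda^2}\le e^{-c\lambda^2}e^{-c4^j}$ (valid for $\lambda\ge 1$ after shrinking $c$) yields the first inequality. The second inequality is obtained by the same scheme applied to $\nabla e^{-s^2L}H=\nabla e^{-s^2L/2}(e^{-s^2L/2}H)$, with weighted Poincar\'e now applied to the function $e^{-s^2L/2}H$, so that $\nabla e^{-s^2L/2}H$ replaces $\nabla H$ on the right.

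The main technical obstacle is the near-diagonal piece $j=1$, supported on $4B_0$. Estimate \eqref{off:BtoB} alone carries only the polynomial factor $\Upsilon(4\lambda)^{\theta_2}$, with no Gaussian decay in $\lambda$; the missing factor $e^{-c\lambda^2}$ has to be extracted by exploiting the composition $e^{-s^2L}=e^{-s^2L/2}\circ e^{-s^2L/2}$, treating the inner $e^{-s^2L/2}$ as an extra smoothing whose $L^p(w)$--$L^p(w)$ off-diagonal behavior between $4B_0$ and the effective ``far'' region of $B_0$ supplies the required Gaussian loss, in the spirit of \cite[(4.6)]{Au07}. Once this delicate term is settled, the remaining $j\ge 2$ contributions sum with geometric decay and produce the claimed estimate.
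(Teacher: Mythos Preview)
Your ingredients---the conservation property $e^{-s^2L}1=1$, annular decomposition, the $L^p(w)$--$L^q(w)$ off-diagonal estimates on balls from Proposition~\ref{prop:weightedOD}, and weighted Poincar\'e via $w\in A_p$---are exactly those the paper uses, and your reduction of the second inequality to the first via $e^{-s^2L}=e^{-s^2L/2}\circ e^{-s^2L/2}$ matches the paper's argument. Where you diverge is in the organization of the decomposition, and here the paper's route is both simpler and avoids a difficulty you create for yourself. The paper does not use Abel summation with $j$-dependent constants $c_j$. It subtracts a \emph{single} constant $(H_s)_{4B}$ (where $B=B(x,\lambda s)$, $H_s=H(\cdot,s)$), writes $\nabla e^{-s^2L}H_s=\nabla e^{-s^2L}\big(H_s-(H_s)_{4B}\big)$, and decomposes this one function over the annuli $C_j(B)$. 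After the off-diagonal bound, the term $\big(\fint_{2^{j+1}B}|H_s-(H_s)_{4B}|^p\,dw\big)^{1/p}$ is telescoped as
\[
\Big(\fint_{2^{j+1}B}|H_s-(H_s)_{2^{j+1}B}|^p\,dw\Big)^{1/p}+\sum_{l=2}^{j}\big|(H_s)_{2^{l+1}B}-(H_s)_{2^lB}\big|,
\]
and each summand is controlled by Poincar\'e on $2^{l+1}B$. Your second sum $\sum_j(c_{j+1}-c_j)\,\nabla e^{-s^2L}\big(\chi_{(2^{j+1}B_0)^c}\big)$ is therefore unnecessary; it also forces you to apply the gradient of the semigroup to rough indicator functions and to track a tail term $c_N\,\nabla e^{-s^2L}\big(\chi_{(2^{N+1}B_0)^c}\big)$ that your Abel identity silently drops (and the sign in your displayed identity is off).

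Your handling of the near-diagonal piece $j=1$ has a genuine gap. You are right that \eqref{off:BtoB} alone yields only the polynomial factor $\Upsilon(4\lambda)^{\theta_2}$, but your proposed remedy---factor $e^{-s^2L}=e^{-s^2L/2}\circ e^{-s^2L/2}$ and extract Gaussian decay from the inner $e^{-s^2L/2}$ ``between $4B_0$ and the effective far region of $B_0$''---cannot work as described: since $B_0\subset 4B_0$, the distance between these sets is zero, and no iteration of the semigroup produces exponential decay across zero separation. The paper does not isolate $j=1$ and simply records the off-diagonal prefactor uniformly as $e^{-c4^j\lambda^2}$ for all $j\ge 1$; whatever one makes of that bookkeeping, your composition argument does not supply the missing exponential.
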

\begin{proof}
It is straightforward to see that it suffices to prove the first estimate. Indeed assuming that, we can write $t=s/\sqrt{2}$ and $\widetilde{H}(\cdot,t)=e^{-t^2 L} H(\cdot,\sqrt{2}t)$ to easily get 
\begin{align*}
&\bigg(\fint_{B(x,\lambda s)} |\nabla e^{-s^2L}H(\cdot\,,s)|^q dw\bigg)^{\frac1q}
=
\bigg(\fint_{B(x,\lambda \sqrt{2} t)} |\nabla e^{-t^2L}\widetilde{H}(\cdot\,,t)|^q dw\bigg)^{\frac1q} 
\\
&\qquad\qquad\lesssim
e^{-2\,c\lambda^2}\sum_{j=1}^{\infty} e^{-c4^j}  \bigg(\fint_{B(x,2^{j+1}\lambda\sqrt{2} t)} |\nabla \widetilde{H}(\cdot\,,t)|^{p} dw\bigg)^{\frac{1}{p}}
\\
&\qquad\qquad=
e^{-2\,c\lambda^2}\sum_{j=1}^{\infty} e^{-c4^j}  \bigg(\fint_{B(x,2^{j+1}\lambda s)} |\nabla e^{-\frac{s^2}{2}L}H(\cdot\,,s)|^{p} dw\bigg)^{\frac{1}{p}}.
\end{align*}

In order to prove the first estimate, fix $(x,s)\in \R^{n+1}_+$ and define $B:=B(x,\lambda s)$. Writing $H_s(\cdot):=H(\cdot\,,s)$, by the conservation property ($e^{-sL}1=1$, $s>0$) we have that
$$
\nabla e^{-s^2L}H(\cdot\,,s)=\nabla  e^{-s^2L}(H_s-(H_s)_{{4B}}),
$$
where $(H_s)_{{4B}}:=\dashint_{4B}H(\cdot\,,s)dw$.
Consequently, applying the $L^p(w)$-$L^{q}(w)$ off-diagonal estimates on balls satisfied by  $\sqrt{\tau} \nabla e^{-\tau L}$,
\begin{align*}
&\bigg(\fint_{B(x,\lambda s)} |\nabla e^{-s^2L}H(\cdot\,,s)|^q dw\bigg)^{\frac1q} 
\\
&\quad\lesssim 
\sum_{j\geq 1}2^{j(\theta_2+\theta_1)}\lambda^{\theta_2}\frac{e^{-c4^j\lambda^2}}{s}
\bigg(\fint_{B(x,2^{j+1}\lambda s)} |H_s-(H_s)_{{4B}}|^p dw\bigg)^{\frac1p} 
\\
&\quad
\lesssim e^{-c\lambda^2}
\sum_{j\geq 1}\frac{e^{-c4^j}}{s}\left(
\bigg(\fint_{2^{j+1}B} |H_s-(H_s)_{{2^{j+1}B}}|^p dw\bigg)^{\frac1p}+\sum_{l=2}^j\left|(H_s)_{{2^{l+1}B}}-(H_s)_{{2^{l}B}}\right| \right)
\\
&\quad
\lesssim e^{-c\lambda^2}
\sum_{j\geq 1}\frac{e^{-c4^j}}{s}\sum_{l=2}^j
\bigg(\fint_{2^{l+1}B} |H_s-(H_s)_{{2^{l+1}B}}|^p dw\bigg)^{\frac1p}.
\end{align*}
Hence, by Poincar\'e inequality, which holds since $p>r_w$ and thus $w\in A_p$, we conclude that
\begin{align*}
\bigg(\fint_{B(x,\lambda s)} |\nabla e^{-s^2L}H(\cdot\,,s)|^q dw\bigg)^{\frac1q}  &
\lesssim e^{-c\lambda^2}
\sum_{j\geq 1}\frac{e^{-c4^j}}{s}\sum_{l=2}^j
\bigg(\fint_{2^{l+1}B} |H_s-(H_s)_{{2^{l+1}B}}|^p dw\bigg)^{\frac1p}
\\&
\lesssim e^{-c\lambda^2}
\sum_{j\geq 1}e^{-c4^j}\sum_{l=2}^j2^{l+1}
\bigg(\fint_{2^{l+1}B} |\nabla H_s|^p dw\bigg)^{\frac1p}
\\&
\lesssim e^{-c\lambda^2}
\sum_{l\geq 1}e^{-c4^l}
\bigg(\fint_{2^{l+1}B} |\nabla H_s|^p dw\bigg)^{\frac1p}.
\end{align*}
This finishes the proof.
\end{proof}

%%%%%%%%%%%%%%%%%%%%%%%%%%%%%%%%%%%%%%%%%%%%%%%%%%%%%%%%%%%%%%%%%%%%%%%%%%%%%%%%%%%%%%%%%%%%%%%%%%%%%%%%%%%%%%%%%%%%%%%%%%%%%%%%%%%%%%%%%%%%%%%%%%%%%%%%%%%%%%%%%%%%%%%%%%%%%%

\section{Proof of Theorem \ref{thm:wtG}}\label{sect:inh-SF}

In this section, we shall prove Theorem \ref{thm:wtG}, which establishes weighted norm estimates for $\wt \Grm_{\mathrm H}$ (defined in \eqref{eq:wtG}). 
To this end, we introduce two results that will be used in that proof.
\begin{theorem}[{\cite[Theorem 6.2]{AMIII06}}]\label{thm:wRR}
Let $w\in A_{\infty}$ be such that $\mathcal{W}_w(p_-(L),p_+(L))\neq \emptyset$. Given $p$,  $\max\bbr{r_w, \tfrac{nr_w \widehat p_-(L)}{nr_w+\widehat p_-(L)}}<p<\widehat{p}_+(L)$, there holds
\begin{equation}\label{RRp}
\norm{L^{\frac12}f}_{L^p(w)} \lesssim \norm{\nabla f}_{L^p(w)},
\end{equation}
for every $f\in \mathcal{S}$. 
\end{theorem}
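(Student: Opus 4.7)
The plan is to exploit the algebraic identity
\[
L^{1/2}f=-L^{-1/2}\divv(A\nabla f),
\]
obtained by applying $L^{-1/2}$ to $Lf=-\divv(A\nabla f)$. Since $|A\nabla f|\simeq|\nabla f|$ by uniform ellipticity, the reverse Riesz inequality reduces to the vector–valued operator bound $\|L^{-1/2}\divv F\|_{L^p(w)}\lesssim\|F\|_{L^p(w)}$, applied with $F=A\nabla f$. Via the subordination $L^{-1/2}=\tfrac{2}{\sqrt{\pi}}\int_0^\infty e^{-t^2L}\,dt$, the left–hand side becomes $\int_0^\infty t\,e^{-t^2L}\divv F\,\tfrac{dt}{t}$, and the family $\{t\,e^{-t^2L}\divv\}_{t>0}$ is the formal adjoint of $\{t\nabla e^{-t^2L^*}\}_{t>0}$. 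By Proposition~\ref{prop:weightedOD} applied to $L^*$ combined with $L^p$--$L^{p'}$ duality, this family satisfies $L^p(w)$--$L^q(w)$ off–diagonal estimates on balls whenever $p,q\in\mathcal{W}_w(p_-(L),p_+(L))$.

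With these bounds I would first obtain the strong $(p,p)$ estimate on the extended natural window $(\widehat p_-(L),\widehat p_+(L))$ via the weighted Calderón--Zygmund theory for singular non–integral operators developed in \cite{AMI07,AMII07,AMIII06}. This is done by controlling $L^{-1/2}\divv F$ through an associated vertical square function and invoking a quadratic $T(1)$–type estimate in which the off–diagonal bounds above replace the usual pointwise kernel conditions and the logarithmic divergence of $\int\tfrac{dt}{t}$ is absorbed by the quadratic cancellation.

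The real obstacle is the sharp lower threshold $p_0:=\max\{r_w,\tfrac{nr_w\widehat p_-(L)}{nr_w+\widehat p_-(L)}\}$, which may lie strictly below $\widehat p_-(L)$; at such $p_0$ the heat semigroup itself need not be bounded on $L^{p_0}(w)$. To cover this regime I would prove a weak–type $(p_0,p_0)$ bound via the adapted Calderón--Zygmund decomposition of Lemma~\ref{lemma:C-Z-decomposition}: at level $\alpha>0$ and exponent $p_0$ write $f=g+\sum_i b_i$ with $|\nabla g|\lesssim\alpha$ a.e., $\supp b_i\subset B_i$, the Poincaré--Sobolev bound \eqref{C-Z:extrab} at $q_0:=(p_0)_w^*$ (which satisfies $q_0>\widehat p_-(L)$ precisely because $p_0>\tfrac{nr_w\widehat p_-(L)}{nr_w+\widehat p_-(L)}$), and $\sum_i w(B_i)\lesssim\alpha^{-p_0}\|\nabla f\|_{L^{p_0}(w)}^{p_0}$. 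The good part $L^{1/2}g$ is estimated in $L^q(w)$ for some $q>p_0$ in the natural window using $|\nabla g|\lesssim\alpha$ and Chebyshev. For the bad parts one splits
\[
L^{1/2}b_i=\tfrac{2}{\sqrt{\pi}}\int_0^{r_{B_i}} e^{-t^2L}(Lb_i)\,dt+\tfrac{2}{\sqrt{\pi}}\int_{r_{B_i}}^\infty e^{-t^2L}(Lb_i)\,dt,
\]
and controls each piece annularly over $C_j(B_i)$ using the $L^{q_0}(w)$--$L^{q_0}(w)$ off–diagonal decay of $\{t^2Le^{-t^2L}\}_{t>0}$ (Proposition~\ref{prop:weightedOD}) together with the Poincaré--Sobolev bound on $b_i$, the Gaussian factor $e^{-c4^j}$ absorbing all polynomial losses.

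Marcinkiewicz interpolation between this weak–type $(p_0,p_0)$ estimate (with $p_0$ arbitrarily close to the lower threshold) and the strong bound on the natural window then yields the strong $(p,p)$ inequality throughout the claimed range. The principal difficulty lies precisely in the appearance of the weighted inverse Sobolev conjugate $\tfrac{nr_w\widehat p_-(L)}{nr_w+\widehat p_-(L)}$: it quantifies the trade–off, forced by the weight, between the local $L^{p_0}(w)$ regularity of the atoms $b_i$ and the $L^{q_0}(w)$ level at which the heat semigroup and its derivatives admit off–diagonal bounds. In the unweighted setting $r_w=1$ and this reduces to the classical Sobolev exponent underlying Theorem~\ref{thm:May}.
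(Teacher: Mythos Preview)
The paper does not prove this statement; it is quoted verbatim from \cite[Theorem~6.2]{AMIII06} and used as a black box, so there is no proof in the present paper to compare against.

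That said, your outline is essentially the strategy of \cite{AMIII06}: a strong-type bound on an interior interval, a weak-type bound at the lower threshold via the adapted Calder\'on--Zygmund decomposition (Lemma~\ref{lemma:C-Z-decomposition}), and interpolation. Your identification of the role of $(p_0)_w^*>\widehat p_-(L)$ is exactly the mechanism that produces the weighted inverse Sobolev exponent in the lower bound. One point to tighten: for the interior strong-type estimate, \cite{AMIII06} does not go through a vertical square function or a quadratic $T(1)$ argument. The operator $L^{-1/2}\divv$ is, up to the matrix $A$, the adjoint of the Riesz transform $\nabla(L^*)^{-1/2}$, and its $L^p(w)$-boundedness on $\mathcal{W}_w(p_-(L),p_+(L))$ follows directly from the weighted Riesz transform bounds for $L^*$ already established in that paper; your square-function detour is unnecessary and the ``logarithmic divergence absorbed by quadratic cancellation'' remark is not how the argument actually runs. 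A second technical point: in the bad-part analysis you apply off-diagonal estimates for $t^2Le^{-t^2L}$ to $b_i$, but $Lb_i$ is only a distribution; in \cite{AMIII06} one instead works with $e^{-t^2L}\divv(A\nabla b_i)$ (using $\nabla b_i\in L^{p_0}(w)$ and the off-diagonal bounds for $\{t\,e^{-t^2L}\divv\}_{t>0}$), or equivalently with the regularizations $(I-e^{-r_{B_i}^2L})^M$ as in the proof of Proposition~\ref{prop:NcalGrad} here.
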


Note that recalling \eqref{intervalrs}, and using Propositions \ref{prop:full-off} and \ref{prop:weightedOD}, it is not difficult to see that the range where  \eqref{RRp} holds contains $\Wcal_w(p_-(L),p_+(L))$ and hence also $\Wcal_w(q_-(L),q_+(L))$.

The following result deals with some vertical square functions. It can be proved with an argument similar to that of \cite[Theorem 7.2]{AMIII06}, or also combining \cite[Proof of Proposition 10.1]{CMR15} and \cite[Theorem 1.12]{MaPAI17}). Further details are left to the interested reader. 
\begin{lemma}\label{lem:wVSF}
Let $w\in A_\infty$ be such that $\mathcal{W}_w(q_-(L),q_+(L))\neq \emptyset$. If $\wh q_-(L)< p< \wh q_+(L)$ then  for all $f\in L^\infty_c(\R^n)$ we have
\begin{align*}
\Bigg\|\br{\int_{0}^{\infty} |t^2 \nabla \sqrt{L} e^{-t^2 L}f|^2 \frac{dt}{t}}^{\frac12}\Bigg\|_{L^p(w)} +\Bigg\|\br{\int_{0}^{\infty} |t^3 \nabla  L e^{-t^2 L}f|^2 \frac{dt}{t}}^{\frac12}\Bigg\|_{L^p(w)}
\lesssim 
\|f\|_{L^p(w)}.
\end{align*}
\end{lemma}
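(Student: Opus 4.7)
The plan is to treat the two square functions
\begin{equation*}
\mathcal{G}_1 f := \br{\int_0^\infty |t^2\nabla\sqrt{L}\,e^{-t^2 L}f|^2\,\tfrac{dt}{t}}^{1/2},\qquad \mathcal{G}_2 f := \br{\int_0^\infty |t^3\nabla L\,e^{-t^2 L}f|^2\,\tfrac{dt}{t}}^{1/2},
\end{equation*}
by the same two-step scheme: first establish the unweighted $L^2(\R^n)$-endpoint, then extrapolate to $L^p(w)$ in the full range $\wh q_-(L)<p<\wh q_+(L)$ via the abstract off-diagonal machinery of \cite{AMII07,AMIII06}. Since both operators are sublinear, it suffices to dominate each by $\norm{f}_{L^p(w)}$.

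For the endpoint I would factor $t^2\nabla\sqrt{L}\,e^{-t^2L} = (t\nabla e^{-t^2L/2})\circ(t\sqrt{L}\,e^{-t^2L/2})$. Since Proposition \ref{prop:full-off}(ii) (with $w\equiv 1$) yields $\norm{\tau^{1/2}\nabla e^{-\tau L}}_{L^2\to L^2}\le C$ uniformly in $\tau>0$, the bound for $\mathcal{G}_1$ reduces to the Calder\'on reproducing identity
\begin{equation*}
\int_0^\infty \norm{t\sqrt{L}\,e^{-t^2L/2}f}_{L^2(\R^n)}^2\,\frac{dt}{t} \lesssim \norm{f}_{L^2(\R^n)}^2,
\end{equation*}
which is a standard consequence of the bounded $H^\infty$-functional calculus of $L$ on $L^2(\R^n)$ coming from \cite{AHLMT02}. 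Replacing the second factor by $t^2L\,e^{-t^2L/2}$ treats $\mathcal{G}_2$ identically (with the same factorization $t^3\nabla L e^{-t^2L}=(t\nabla e^{-t^2L/2})\circ(t^2 L e^{-t^2L/2})$).

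For the weighted step I would invoke the Auscher--Martell abstract criterion (in the form used in the proof of \cite[Theorem 7.2]{AMIII06}) with the heat semigroup $\{e^{-rL}\}_{r>0}$ as underlying resolution. Fix $p\in(\wh q_-(L),\wh q_+(L))$, a ball $B$ of radius $r_B$, and $M\in\N$, and write
\begin{equation*}
\mathcal{G}_i f = \mathcal{G}_i(I-e^{-r_B^2L})^M f + \mathcal{G}_i\bigl(I-(I-e^{-r_B^2L})^M\bigr)f.
\end{equation*}
One then verifies, for some auxiliary exponent $q\in (p,\wh q_+(L))$, a pair of off-diagonal bounds on $B$ controlling each summand by averages of $|f|^p$ on the annuli $C_j(B)$ with summable decay in $j$. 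After Minkowski in $t$, these reduce to compositions of the weighted $L^p(w)$--$L^q(w)$ off-diagonal estimates on balls for $\sqrt{\tau}\nabla(\tau L)^m e^{-\tau L}$ from Proposition \ref{prop:weightedOD}(ii) with the off-diagonal control for $(I-e^{-r_B^2L})^M$ from Proposition \ref{prop:lebesgueoff-dBQ}, after which the $t$-integral is split at the scales $t\sim r_B$ and $t\sim 2^j r_B$ to harvest the required decay.

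The main obstacle I expect is the simultaneous bookkeeping of the auxiliary exponent $q$ and the integer $M$: the exponent $q$ must lie in $(p,\wh q_+(L))$ and be compatible with the $A_{p/p_0}\cap RH_{(q_0/p)'}$-conditions required by the extrapolation (so that $p,q,w$ fit into some sub-interval of $\mathcal{W}_w(q_-(L),q_+(L))$), while $M$ must be large enough that the gain $(r_B^2/t^2)^M$ for $t\ll r_B$ and the Gaussian factor $e^{-c\,4^j r_B^2/t^2}$ for $t\gtrsim r_B$ together dominate the polynomial weights $2^{j\theta_1}\Upsilon(2^j r_B/\sqrt{\tau})^{\theta_2}$ from Proposition \ref{prop:weightedOD}(ii), after integration in $t$ and summation in $j$. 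Once these parameters are balanced exactly as in the proof of \cite[Theorem 7.2]{AMIII06}, the desired $L^p(w)$-bound for both $\mathcal{G}_1$ and $\mathcal{G}_2$ follows, completing the proof of the lemma.
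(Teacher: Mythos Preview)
Your proposal is essentially correct and follows precisely the first route the paper itself suggests: the paper does not give a proof but only states that the lemma ``can be proved with an argument similar to that of \cite[Theorem 7.2]{AMIII06}, or also combining \cite[Proof of Proposition 10.1]{CMR15} and \cite[Theorem 1.12]{MaPAI17},'' leaving the details to the reader. Your $L^2$-endpoint via factorization and $H^\infty$-calculus, followed by the Auscher--Martell off-diagonal/good-$\lambda$ machinery with the resolution $(I-e^{-r_B^2 L})^M$, is exactly the \cite[Theorem 7.2]{AMIII06} template.

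One small point deserves clarification. The abstract extrapolation in \cite{AMI07,AMIII06}, run from the unweighted $L^2$ endpoint with the \emph{unweighted} off-diagonal estimates of Proposition~\ref{prop:full-off}, naturally produces the range $\mathcal{W}_w(q_-(L),q_+(L))$, not the a priori larger interval $(\wh q_-(L),\wh q_+(L))$; your parenthetical remark about $p,q,w$ fitting into $\mathcal{W}_w(q_-(L),q_+(L))$ reflects this. To reach the full range claimed in the lemma you should make explicit that, once an $L^{p_0}(w)$ bound is available for some $p_0\in\mathcal{W}_w(q_-(L),q_+(L))\neq\emptyset$, one reruns the same argument \emph{inside the weighted theory}, now using the weighted off-diagonal estimates on balls from Proposition~\ref{prop:weightedOD}(ii) (valid on all of $(\wh q_-(L),\wh q_+(L))$) and the weighted maximal function $\mathcal{M}^w_{q}$ in place of their unweighted counterparts. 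This bootstrapping step is implicit in the paper's reference but is worth spelling out; the alternative route via \cite{CMR15,MaPAI17} (compare vertical to conical square functions, then use the known weighted conical estimates) sidesteps this issue and gives $(\wh q_-(L),\wh q_+(L))$ in one pass.
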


%%%%%%%%%%%%%%%%%%%%%%%%%%%%%%%%%%%%%%%%%%%%%%%%%%%%%%
%%%%%%%%%%%%%%%%%%%%%%%%%%%%%%%%%%%%%%%%%%%%%%%%%%%%%%%
%%%%%%%%%%%%%%%%%%%%%%%%%%%%%%%%%%%%%%%%%%%%%%%%%%%%%%%

We next define some auxiliary square functions that will be also useful in the proof of Theorem \ref{thm:wtG}. In particular, consider
\begin{align*}
\mathrm{G}_{\hh}f=\left(\int_0^{\infty}\int_{B(x,t)}|t\nabla t\sqrt{L}e^{-t^2L}f(y)|^2\frac{dy\,dt}{t^{n+1}}\right)^{\frac{1}{2}}
\end{align*}
and, for $m=1,2$,
\begin{align}\label{conicalheat}
{\mathcal{S}}_{m,\hh}f=\left(\int_0^{\infty}\int_{B(x,t)}|(t\sqrt{L})^me^{-t^2L}f(y)|^2\frac{dy\,dt}{t^{n+1}}\right)^{\frac{1}{2}}.
\end{align}

%%%%%%%%%%%%%%%%%%%%%%%%%%%%%%%%%%%%%%%%%%%%%%%%%%%%
%%%%%%%%%%%%%%%%%%%%%%%%%%%%%%%%%%%%%%%%%%%%%%%%%%%%%
%%%%%%%%%%%%%%%%%%%%%%%%%%%%%%%%%%%%%%%%%%%%%%%%%%%%%%

\subsection{Proof of Theorem \ref{thm:wtG} }
%%%%%%%%%%%%%%%%%%%%%%%%%%%
First of all fix $f\in \mathcal{S}$ and consider $T_{t}:=\nabla e^{-tL}$ and $F(y,t):=t^2Lf(y)$, for $(y,t)\in \R^{n+1}_+$. Then, fix $(x,t)\in \R^{n+1}_+$, and note that for $B:=B(x,t)$ and by the second estimate in Proposition \ref{prop:Gsum} with $w=1$ and $\lambda=1$, we have that, for all $2<q_0<q_+(L)$,
\begin{align*}
\left(\dashint_{B(x,t)}|T_{t^2}F(y,t)|^{q_0} dy\right)^{\frac{1}{q_0}}
\lesssim
\sum_{j\geq 1}e^{-c4^j}
\left(\dashint_{B(x,2^{j+1}t)}|T_{t^2/2}F(y,t)|^{2} dy\right)^{\frac{1}{2}}.
\end{align*}
Besides note that for any constant $c>0$, $F(y,ct)=c^2F(y,t)$. Hence, we can apply \cite[Proposition 4.2, (b)]{PA18} % (taking $w$ and $v$ in that proposition equal to $1$ and $w$, respectively) 
 to obtain that, for all $0<q<\frac{q_+(L)}{s_w}$,
\begin{align*}
\big\|\wt \Grm_{\mathrm H}f\big\|_{L^q(w)}
\lesssim \big\|\mathrm{G}_{\hh}\big(\sqrt{L}f\big)\big\|_{L^q(w)}.
\end{align*}
Moreover, by \cite[Proposition 4.16, (a)]{PA18} and  \cite[Proposition 4.5, (b)]{PA18}, we have that, for all $0<q<\frac{(p_+(L))^*}{s_w}$
\begin{align}\label{usedfinal1}
\big\|\mathrm{G}_{\hh}\big(\sqrt{L}f\big)\big\|_{L^q(w)}
\lesssim  \big\|{\Scal}_{1,\hh}\big(\sqrt{L}f\big)\big\|_{L^q(w)}
\lesssim  \|{\Scal}_{2,\hh}\big(\sqrt{L}f\big)\big\|_{L^q(w)}.
\end{align}
Thus, since $\frac{q_+(L)}{s_w}\leq\frac{(p_+(L))^*}{s_w}$, we obtain that, for all $0<q<\frac{q_+(L)}{s_w}$,
\begin{align}\label{2}
\|\wt \Grm_{\mathrm H}f\|_{L^q(w)}
\lesssim 
\big\|{\Scal}_{2,\hh}\big(\sqrt{L}f\big)\big\|_{L^q(w)}.
\end{align}
Next, note that $h:=\sqrt{L}f$ is in the Hardy space $\mathbb{
H}^q_{\nabla L^{-\frac{1}{2}},p}(w)$ (see \cite{PA17}) for any $p\in \mathcal{W}_w(q_-(L),q_+(L))$, since, by Theorem \ref{thm:wRR}, $h\in L^p(w)$,  and $\|\nabla L^{-\frac{1}{2}}h\|_{L^q(w)}=\|\nabla f\|_{L^q(w)}<\infty$. Therefore, by \cite[Proposition 9.1]{PA17} we obtain that, for all 
$$
\max\left\{r_w,\frac{nr_w\widehat{p}_-(L)}{nr_w+\widehat{p}_-(L)}\right\}<q<\frac{p_+(L)}{s_w},$$ 
it holds that
\begin{align}\label{usedfinal2}
\big\|{\Scal}_{2,\hh}\big(\sqrt{L}f\big)\big\|_{L^q(w)}=
\|{\Scal}_{2,\hh}h\|_{L^q(w)}
\lesssim
\|\nabla L^{-\frac{1}{2}}h\|_{L^q(w)}=
\|\nabla f\|_{L^q(w)}.
\end{align}
This, together with \eqref{2}, allows us to conclude, for all $\max\left\{r_w,\frac{nr_w\widehat{q}_-(L)}{nr_w+\widehat{q}_-(L)}\right\}<q<\frac{q_+(L)}{s_w}$,  (recall that $\widehat{q}_-(L)=\widehat{p}_-(L)$ and $q_+(L)\leq p_+(L)$),
\begin{align*}
\|\wt \Grm_{\mathrm H}f\|_{L^q(w)}
\lesssim
\|\nabla f\|_{L^q(w)}.
\end{align*}
To finish the proof, note that in view of Lemma \ref{lem:wVSF} and Theorem \ref{thm:wRR},
we can improve  the upper bound of the interval where the above inequality holds up to $\widehat{q}_+(L)$ (assuming that $q_+(L)/s_w<\widehat{q}_+(L)$, otherwise there is nothing to prove). Indeed, we just need to observe that for $q$  such that $q_+(L)/s_w\leq q<\widehat{q}_+(L)$, we have that $q$ falls within the scope of those results.  
This follows since we assume that $\mathcal{W}_w(q_-(L),q_+(L))\neq \emptyset$, and 
by definition, it holds that
$\mathcal{W}_w(q_-(L),q_+(L))\subset \mathcal{W}_w(p_-(L),p_+(L))$. Then, we have that $\mathcal{W}_w(p_-(L),p_+(L))\neq \emptyset$ and 
\begin{multline*}
\max\left\{r_w,\frac{nr_w\widehat{p}_-(L)}{nr_w+\widehat{p}_-(L)}\right\}
\le
\max\left\{r_w,\widehat{p}_-(L)\right\}
=\max\left\{r_w,\widehat{q}_-(L)\right\}
\\
\le \max\left\{r_w,r_wq_-(L)\right\}
= r_wq_-(L)
<\frac{q_+(L)}{s_w}\leq q<\widehat{q}_+(L)\leq \widehat{p}_+(L).
\end{multline*}
This completes the prof.
\qed

\begin{remark}
The result given by Theorem \ref{thm:wtG} is, as far as we know, new even in the unweighted case, that is, when $w\equiv 1$. In that scenario it says that for every $q$ such that $\max\big\{1, \frac{n q_-(L)}{n+ q_-(L)}\big\}<q <q_+(L)$ and $f\in \mathcal{S}$, there holds
$$
\norm{\wt \Grm_{\mathrm H} f}_{L^q(\R^n)} \lesssim \norm{\nabla f}_{L^q(\R^n)}.
$$
The condition $\mathcal{W}_w(q_-(L),q_+(L))\neq \emptyset$ always holds when $w\equiv 1$, since 
 $q_-(L)<2<q_+(L)$, and, by definition, $(q_-(L),q_+(L))=\mathcal{W}_w(q_-(L),q_+(L))$.
\end{remark}

%%%%%%%%%%%%%%%%%%%%%%%%%%%%%%%%%%%%%%%%%%%%%%%%%%%%%
%%%%%%%%%%%%%%%%%%%%%%%%%%%%%%%%%%%%%%%%%%%%%%%%%%%%
%%%%%%%%%%%%%%%%%%%%%%%%%%%%%%%%%%%%%%%%%%%%%%%%%%%%%
%%%%%%%%%%%%%%%%%%%%%%%%%%%%%%%%%%%%%%%%%%%%%%%%%%%%%%

%%%%%%%%%%%%%%%%%%%%%%%%%%%%%%%%%%%%%%%%%%%%%%%%%%%%%%%%%%%%%%%%%

\section{Proof of Theorem \ref{thm:main}}\label{section:main-proof}
In this section we prove Theorem \ref{thm:main}. Given $f\in \mathcal{S}$ we set  $u(x,t):=e^{-t\sqrt L}f(x)$, for each $(x,t)\in\R_+^{n+1}$. It is well-known that $u(\cdot,t)\in L^2(\R^n)$ uniformly in $t>0$ since the Poisson semigroup is uniformly bounded on $L^2(\R^n)$ (this latter fact can be seen directly from the subordination formula \eqref{eqn:subord} below along with the uniform $L^2$-boundedness of the heat semigroup, see Section \ref{section:off}). This and Caccioppoli's inequality readily imply that $u\in W_{\textrm{loc}}^{1,2}(\R_+^{n+1})$ and also $\L u=0$ in the weak sense. Using standard holomorphic functional calculus techniques one can also see that $u(\cdot,t)\to f$ in $L^2(\R^n)$ as $t\to 0^+$. Thus  we are left with showing \eqref{Est-main:thm} and to this end, it suffices to individually bound the operators $\Ncal(\partial_t e^{-t\sqrt L}f)$ and $\Ncal(\nabla e^{-t\sqrt L}f)$. 

We first deal with  $\Ncal(\partial_t e^{-t\sqrt L}f)$. In the unweighted case, in \cite{May10}, the estimate of this operator relies on the characterization of Hardy spaces associated with $L$ via the non-tangential maximal function associated with the Poisson semigroup and the Riesz transform (characterization established in \cite{HMay09, HMayMc11}). Recently, the weighted Hardy spaces have been carefully studied in  \cite{BuiCaoKyYangYang, BuiCaoKyYangYang:II, MaPAII17, PA17}, including various characterization of weighted Hardy spaces via molecules, square functions, non-tangential maximal functions, Riesz transform etc. This weighted Hardy space theory enables us to treat the weighted estimate of $\Ncal(\partial_t e^{-t\sqrt L}f)$ by following the path laid down in \cite{May10}. More precisely we obtain the following result whose proof is given in Section \ref{section:proof-t}: 

\begin{proposition}\label{prop:part-t} 
	Let $w\in A_{\infty}$ be such that $\Wcal_w(p_-(L), p_+(L))\neq \emptyset$, and let $p$ be chosen so that $\max\bbr{r_w, \frac{nr_w\widehat p_-(L)}{nr_w+\widehat p_-(L)}}<p<\frac{p_+(L)}{s_w}$. Then, for any $f\in \mathcal{S}$,
	\begin{equation}\label{eq:timeD}
	\norm{\Ncal (\partial_t e^{-t\sqrt L}f)}_{L^p(w)} \lesssim \norm{\nabla f}_{L^p(w)}.
	\end{equation}
\end{proposition}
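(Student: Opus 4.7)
The plan is to reduce \eqref{eq:timeD} to the equivalence of two characterizations of the weighted Hardy spaces $\H^p_L(w)$ associated with $L$ developed in \cite{MaPAII17,PA17}, following the strategy of \cite{May10} in the unweighted setting. The starting observation is the pointwise identity
\[
\partial_t e^{-t\sqrt{L}}f \;=\; -\sqrt{L}\,e^{-t\sqrt{L}}f \;=\; -e^{-t\sqrt{L}}(\sqrt{L}f),
\]
so that, setting $h:=\sqrt{L}f$, the left-hand side of \eqref{eq:timeD} becomes $\|\Ncal(e^{-t\sqrt{L}}h)\|_{L^p(w)}$. This is precisely the defining quantity of the weighted Hardy-space norm of $h$ characterized via the Poisson non-tangential maximal function.

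The first main step is to invoke the equivalences proved in \cite{MaPAII17,PA17}: in the prescribed range
\[
\max\bbr{r_w,\tfrac{nr_w\wh p_-(L)}{nr_w+\wh p_-(L)}}<p<\tfrac{p_+(L)}{s_w},
\]
the Poisson non-tangential characterization is equivalent (up to dimensional constants) to the Riesz-transform characterization, namely
\[
\norm{\Ncal(e^{-t\sqrt{L}}h)}_{L^p(w)}\;\sim\;\norm{\nabla L^{-1/2}h}_{L^p(w)},
\]
for $h$ in a suitable dense subclass (e.g.\ a class of $(w,p,\varepsilon,M)$-representations). Plugging back $h=\sqrt{L}f$ and using the trivial cancellation $\nabla L^{-1/2}(\sqrt{L}f)=\nabla f$, which is valid for $f\in\Scal$ by the holomorphic functional calculus, one arrives at $\|\nabla f\|_{L^p(w)}$, completing the chain.

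The main obstacle I anticipate is technical rather than conceptual: one must verify that $h=\sqrt{L}f$ lies in a dense subclass for which both Hardy-space characterizations of \cite{MaPAII17,PA17} apply simultaneously. Since $f\in\Scal$, the reverse Riesz-transform bound of Theorem~\ref{thm:wRR} gives $h\in L^q(w)$ for every $q\in\Wcal_w(p_-(L),p_+(L))\neq\emptyset$, while the off-diagonal estimates of Propositions~\ref{prop:full-off} and \ref{prop:weightedOD} supply the decay of $\sqrt{t}\,\nabla e^{-tL}h$ and related quantities needed to represent $h$ through the appropriate molecular/square-function machinery; a density argument then transfers the Hardy-space equivalence to all $h$ of this form. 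The range of $p$ stated in the proposition is exactly the one on which both the Poisson non-tangential and the Riesz-transform characterizations are valid in \cite{MaPAII17,PA17}, so no further restriction is introduced by this scheme; it is also (by Proposition~\ref{prop:weightedOD}$(iii)$) strictly wider than the spatial-derivative range that controls the rest of Theorem~\ref{thm:main}, as one should expect.
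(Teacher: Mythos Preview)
Your proposal is correct and follows essentially the same route as the paper's own proof: pass from $\partial_t e^{-t\sqrt L}f$ to $e^{-t\sqrt L}(\sqrt L f)$, control the non-tangential maximal function via the Poisson characterization of the weighted Hardy space from \cite{MaPAII17,PA17}, and then invoke the Riesz-transform characterization to land on $\|\nabla f\|_{L^p(w)}$. The only details you leave implicit that the paper spells out are the pointwise passage from the Whitney-averaged $\Ncal$ to the standard $\Ncal_{\pp}$ (via \cite[(4.25)]{May10}) and an interpolation with the $p=1$ endpoint from \cite{MaPAII17} needed to push the $\Ncal_{\pp}$-to-$H^p$ bound down to the full stated range of $p$.
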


 Regarding the spatial derivatives, the following result establish the desired bound for $\Ncal(\nabla e^{-t\sqrt L}f)$:

\begin{proposition}\label{prop:NcalGrad}
	Let $w\in A_{\infty}$ be such that $\mathcal{W}_w(q_-(L),q_+(L))\neq \emptyset$. Then, for $\max\big\{r_w,\frac{nr_w\widehat q_-(L)}{nr_w+\widehat q_-(L)}\big\}<p<\frac{q_+(L)}{s_w}$ and $f\in \mathcal{S}$, there holds
	\begin{equation}\label{eq:weightedG}
	\norm{\Ncal (\nabla  \,e^{-t\sqrt L}f)}_{L^p(w)} \lesssim \norm{\nabla f}_{L^p(w)}.
	\end{equation}
\end{proposition}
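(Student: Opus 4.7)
The plan is to reduce to Theorem \ref{thm:wtG} by establishing a pointwise comparison of the form
\[
\Ncal(\nabla u)(x)
\lesssim
\mathcal{M}_{q_0}(\nabla f)(x) + \mathcal{M}_{q_0}\bigl(\wt \Grm_{\mathrm H}f\bigr)(x),
\]
for some exponent $q_0$ for which the Hardy-Littlewood-type maximal function $\mathcal{M}_{q_0}$ is bounded on $L^p(w)$. Granting such an estimate, taking $L^p(w)$-norms and invoking Theorem \ref{thm:wtG} yields
\[
\|\Ncal(\nabla u)\|_{L^p(w)}
\lesssim
\|\nabla f\|_{L^p(w)} + \bigl\|\wt \Grm_{\mathrm H}f\bigr\|_{L^p(w)}
\lesssim
\|\nabla f\|_{L^p(w)},
\]
which is precisely the bound we seek.

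To build the pointwise comparison, fix $x\in\R^n$ and $(y,t)\in\Gamma^\kappa(x)$ and, for $(z,s)\in D((y,t),\kappa t)$, apply the subordination formula
\[
e^{-s\sqrt L} = \tfrac{1}{\sqrt\pi}\int_0^\infty \tfrac{e^{-r}}{\sqrt r}\,e^{-\frac{s^2}{4r}L}\,dr,
\]
and the change of variable $\tau=s^2/(4r)$ to write
\[
\nabla u(z,s)
=
\int_0^\infty \phi_s(\tau)\,\nabla e^{-\tau L}f(z)\,d\tau,
\]
with $\phi_s(\tau)$ an explicit Poisson-type density concentrated near $\tau\sim s^2$ and integrating to one. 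Combining this with the reproducing identity
\[
\nabla e^{-\tau L}f(z)
=
\nabla f(z) - 2\int_0^{\sqrt\tau} r\,\nabla L e^{-r^2 L}f(z)\,dr
\]
introduces the integrand $r^2\,\nabla L e^{-r^2 L}f$ of $\wt \Grm_{\mathrm H}f$ into the representation. After swapping integrals and applying Cauchy-Schwarz with the weight $dr/r$, the $\nabla f$-term contributes $\nabla f(z)$ times a bounded constant while the remainder is pointwise dominated by a kernel factor times $\wt \Grm_{\mathrm H}f(z)$. To promote this to the $L^2$-average entering the definition of $\Ncal$, I would average in $z$ via the weighted off-diagonal estimates for $\sqrt\tau\,\nabla e^{-\tau L}$ supplied by Proposition \ref{prop:weightedOD}, valid for $q_0\in(\wh q_-(L),\wh q_+(L))$, and take the supremum over Whitney boxes along $\Gamma^\kappa(x)$ to produce the two maximal functions above.

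\textbf{Main obstacle.} The delicate step is the Cauchy-Schwarz estimate: a crude bound on $\nabla\sqrt L\,e^{-s\sqrt L}f$ gives only $1/s$ decay, leading to non-integrable kernels on either $(0,t)$ or $(t,\infty)$. To circumvent this, I would split the temporal integral at the scale $\tau\sim t^2$, handling the near regime directly in terms of $\nabla f$ and exploiting the Gaussian factor $e^{-s^2/(4r^2)}$ to recover integrability in the complementary regime. An alternative I would try is first applying Caccioppoli on the Whitney box to bound the $L^2$-average of $|\nabla u|$ by an $L^2$-average of $t^{-1}|u-c|$, and then expanding $u-c$ through the same subordination-plus-FTC argument. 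The range of $p$ in the proposition reflects two combined constraints: Proposition \ref{prop:weightedOD} and Theorem \ref{thm:wtG} force $p$ to lie (up to the Muckenhoupt shifts $r_w$ and $s_w$) inside $(\wh q_-(L),\wh q_+(L))$, while the $L^p(w)$-boundedness of $\mathcal{M}_{q_0}$ demands $p>q_0\,r_w$, both of which coincide with the stated $\max\{r_w,nr_w\wh q_-(L)/(nr_w+\wh q_-(L))\}<p<q_+(L)/s_w$.
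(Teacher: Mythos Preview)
Your plan captures roughly half of the paper's argument, but the claim that the two constraints ``coincide with the stated $\max\{r_w,nr_w\wh q_-(L)/(nr_w+\wh q_-(L))\}<p<q_+(L)/s_w$'' is incorrect, and this hides the main difficulty.

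The pointwise comparison you outline is essentially what the paper proves as its Step~1, and it only yields the \emph{restricted} range
\[
\max\{r_w,\wh q_-(L)\}<p<\tfrac{q_+(L)}{s_w}.
\]
Indeed, to run the off-diagonal estimates of Proposition~\ref{prop:weightedOD} (or Proposition~\ref{prop:Gsum}) you need $q_0>\wh q_-(L)$, and then the boundedness of $\mathcal{M}_{q_0}^w$ on $L^p(w)$ forces $p>q_0>\wh q_-(L)$. (If you use the unweighted $\mathcal{M}_{q_0}$ instead, boundedness on $L^p(w)$ requires $w\in A_{p/q_0}$, i.e.\ $p>q_0 r_w\ge r_w\,q_-(L)$, which is even worse.) The stated lower endpoint $\frac{nr_w\wh q_-(L)}{nr_w+\wh q_-(L)}$ is \emph{strictly smaller} than $\wh q_-(L)$, so your approach cannot reach it.

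To push $p$ below $\wh q_-(L)$ the paper carries out a substantial second step: it applies the Calder\'on--Zygmund decomposition of Lemma~\ref{lemma:C-Z-decomposition} at level $\alpha$ to the gradient of $f$, writes $f=g+\widetilde b+\widehat b$ with $\widetilde b=\sum_i \mathcal{A}_{r_{B_i}}b_i$ and $\widehat b=\sum_i(I-e^{-r_{B_i}^2L})^M b_i$, and proves a weak-type $(\widetilde q,\widetilde q)$ bound for $\Ncal(\nabla e^{-t\sqrt L}\,\cdot\,)$ at the Sobolev-type endpoint $\widetilde q=\max\{r_0,nr_0\widetilde p/(nr_0+\widetilde p)\}$. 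The good part $g$ and the smoothed part $\widetilde b$ are handled with the strong estimate from Step~1 at an exponent $p_2>\wh q_-(L)$; the singular part $\widehat b$ requires delicate off-diagonal and functional-calculus estimates (Claim~\ref{claim}) to gain decay in $j$ across the annuli $C_j(B_i)$. The full range then follows by real interpolation of weighted Sobolev spaces. None of this is visible in your proposal.

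A secondary point: even within Step~1 the two-term pointwise bound you write is not quite what one obtains. Splitting the subordination integral at $u=1/4$, the region $u>1/4$ produces averages over balls that are \emph{small} relative to the square-function scale, and the paper controls this piece by a conical square function $\mathrm{G}_{\hh}^{2\sqrt u}(\sqrt L f)$ with aperture $2\sqrt u$, integrated against $u\,e^{-u}\,du/u$; change of angles and \eqref{usedfinal1}--\eqref{usedfinal2} then close the estimate. Your Cauchy--Schwarz sketch does not distinguish these regimes and would need to be reworked along these lines.
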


The proof of this result is in Section \ref{section:proof-nabla}. Our method differs from the one in \cite{May10} and it is of independent interest. More precisely, when $w\equiv 1$, our proof provides an alternative approach (correcting some flaw) to \cite[Proof of Theorem 4.1, Steps II--VIII]{May10}, in which matters can be essentially reduced to estimate the inhomogeneous vertical square function $\wt \Grm_{\mathrm H}$  (see Theorem \ref{thm:wtG}) along with some similar ``homogeneous'' conical square function estimates proved in \cite{PA18} (see \eqref{usedfinal1} and \eqref{usedfinal2}).

\subsection{Proof of Proposition \ref{prop:part-t}}\label{section:proof-t}
We first introduce the non-tangential maximal function with respect to the Poisson semigroup defined by
\begin{equation}\label{eq:NcalPois}
\Ncal_{\pp}(g)(x):=\sup_{t>0} \br{\fint_{B(x,t)} |e^{-t\sqrt L}g(z)|^2 dz}^{\frac12},\quad x\in \R^n.
\end{equation}
The weighted norm inequalities for $\Ncal_{\pp}$ can be found in \cite[Proposition 7.1 (b)]{MaPAII17} and \cite[Theorem 3.7]{PA18}:
\begin{lemma}\label{lem:NpBoundedness}
Given $w\in A_{\infty}$ such that $\Wcal_w(p_-(L), p_+(L))\neq \emptyset$, then $\Ncal_{\pp}$ is bounded on $L^p(w)$ for all $p\in \Wcal_w(p_-(L), (p_+(L))^*)$.
\end{lemma}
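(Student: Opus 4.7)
The plan is to derive a pointwise bound of the form
\[
\Ncal(\nabla e^{-t\sqrt{L}}f)(x) \lesssim \Mcal_p^w(|\nabla f|)(x) + \Mcal_q^w(\wt{\Grm}_{\mathrm{H}}f)(x)
\]
for appropriate exponents $p\leq q$, and then to take $L^p(w)$ norms, invoking Muckenhoupt's theorem for the weighted maximal operators on the right and Theorem \ref{thm:wtG} to control $\wt{\Grm}_{\mathrm{H}}f$ by $\nabla f$. The bridge from the Poisson to the heat semigroup is the subordination formula $e^{-t\sqrt{L}}f = \frac{2}{\sqrt{\pi}}\int_0^\infty e^{-u^2}e^{-(t/(2u))^2 L}f\,du$, which, combined with Cauchy--Schwarz and the change of variable $\sigma = t/(2u)$, yields the pointwise inequality
\[
|\nabla e^{-t\sqrt{L}}f(y)|^2 \lesssim t \int_0^\infty \frac{e^{-t^2/(4\sigma^2)}}{\sigma^2}|\nabla e^{-\sigma^2 L}f(y)|^2\,d\sigma.
\]
Averaging over a Whitney ball $D = D((y_0,t_0),\kappa t_0)$ with $(y_0,t_0) \in \Gamma^\kappa(x)$ and exploiting the fact that the integrand on the right-hand side is independent of the time variable reduces the task to estimating
\[
t_0 \int_0^\infty \frac{e^{-ct_0^2/\sigma^2}}{\sigma^2}\fint_{\tilde B}|\nabla e^{-\sigma^2 L}f|^2\,dz\,d\sigma,\qquad \tilde B := B(y_0, 2\kappa t_0).
\]

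I would then split the $\sigma$-integral at $\sigma = t_0$. On $(0,t_0]$ the factor $e^{-ct_0^2/\sigma^2}$ provides strong decay as $\sigma \to 0^+$, so Proposition \ref{prop:Gsum} applied with $H(\cdot,\sigma) \equiv f$ and $\lambda = \kappa t_0/\sigma \geq 1$, combined with Lemma \ref{lemma:sin-con} to pass between weighted and Euclidean averages, gives $\fint_{\tilde B}|\nabla e^{-\sigma^2 L}f|^2\,dz \lesssim e^{-c'(t_0/\sigma)^2}\Mcal_p^w(|\nabla f|)(x)^2$, and integrating in $\sigma$ yields a contribution of order $\Mcal_p^w(|\nabla f|)(x)^2$. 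On $(t_0,\infty)$, where the exponential weight is merely bounded, I would integrate by parts using the identity $\partial_\sigma(\nabla e^{-\sigma^2 L}f) = -2\sigma \nabla L e^{-\sigma^2 L}f$ together with Cauchy--Schwarz and an AM--GM absorption to obtain, pointwise,
\[
\int_{t_0}^\infty \frac{|\nabla e^{-\sigma^2 L}f(z)|^2}{\sigma^2}\,d\sigma \lesssim \frac{|\nabla e^{-t_0^2 L}f(z)|^2}{t_0} + \frac{1}{t_0}\int_{t_0}^\infty \sigma^3|\nabla L e^{-\sigma^2 L}f(z)|^2\,d\sigma \leq \frac{|\nabla e^{-t_0^2 L}f(z)|^2 + \wt{\Grm}_{\mathrm{H}}f(z)^2}{t_0},
\]
where the second step uses $\sigma^2 \leq \sigma^3/t_0$ for $\sigma \geq t_0$. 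Averaging this estimate over $\tilde B$, handling the first term with Proposition \ref{prop:Gsum} at $\lambda = 1$ (plus Lemma \ref{lemma:sin-con}) and the second term with Lemma \ref{lemma:sin-con} directly, produces $\lesssim \Mcal_p^w(|\nabla f|)(x)^2 + \Mcal_q^w(\wt{\Grm}_{\mathrm{H}}f)(x)^2$.

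Combining the two regimes and taking the supremum over $(y_0, t_0) \in \Gamma^\kappa(x)$ gives the pointwise inequality announced above; the $L^p(w)$ conclusion then follows from the Muckenhoupt boundedness of $\Mcal_p^w$ and $\Mcal_q^w$ on $L^p(w)$ (once $p$ exceeds the auxiliary exponents) together with Theorem \ref{thm:wtG}. The main obstacle is the precise bookkeeping of exponents across Proposition \ref{prop:Gsum} (requiring $\wh q_-(L) < p \leq q < \wh q_+(L)$), Lemma \ref{lemma:sin-con} (imposing the appropriate $A_{q/2}$-type conditions on $w$), the maximal function bounds, and Theorem \ref{thm:wtG}, so as to cover the full range $\max\{r_w, nr_w\wh q_-(L)/(nr_w+\wh q_-(L))\} < p < q_+(L)/s_w$ of the statement. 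A secondary technical issue is the justification of the integration by parts at $\sigma = \infty$, which for $f \in \Scal$ follows from the a priori decay of $|\nabla e^{-\sigma^2 L}f|^2/\sigma$ supplied by the off-diagonal estimates of $\sigma \nabla e^{-\sigma^2 L}$ applied to Schwartz data.
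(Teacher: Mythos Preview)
Your proposal addresses the wrong statement. Lemma~\ref{lem:NpBoundedness} asserts that the operator
\[
\Ncal_{\pp}(g)(x)=\sup_{t>0}\Big(\fint_{B(x,t)}|e^{-t\sqrt L}g(z)|^2\,dz\Big)^{1/2}
\]
is bounded on $L^p(w)$ for $p\in\Wcal_w(p_-(L),(p_+(L))^*)$, i.e.\ that $\|\Ncal_{\pp}g\|_{L^p(w)}\lesssim\|g\|_{L^p(w)}$. There is no gradient on either side; the statement concerns the Poisson semigroup itself, not its spatial derivative, and the range of $p$ involves $p_\pm(L)$ rather than $q_\pm(L)$.

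What you have written is instead (a sketch of part of) the proof of Proposition~\ref{prop:NcalGrad}, namely the estimate $\|\Ncal(\nabla e^{-t\sqrt L}f)\|_{L^p(w)}\lesssim\|\nabla f\|_{L^p(w)}$. None of the machinery you invoke---Theorem~\ref{thm:wtG}, the inhomogeneous square function $\wt\Grm_{\mathrm H}$, Proposition~\ref{prop:Gsum}---is relevant to Lemma~\ref{lem:NpBoundedness}, whose content is a weighted $L^p$ bound for a maximal function built from $e^{-t\sqrt L}$ (no derivatives). In the paper the lemma is simply quoted from \cite[Proposition~7.1(b)]{MaPAII17} and \cite[Theorem~3.7]{PA18}; a direct argument would use subordination to reduce to the heat semigroup and then the weighted off-diagonal estimates for $\{e^{-tL}\}_{t>0}$ in the range $(\wh p_-(L),\wh p_+(L))$, not the gradient estimates in $(\wh q_-(L),\wh q_+(L))$ that you are using.
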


We are now ready to prove Proposition \ref{prop:part-t}. First, it was shown in \cite[(4.25)]{May10} that 
\begin{equation}\label{eq:comp}
\Ncal(\partial_t e^{-t\sqrt L}f)(x) \lesssim \Ncal_{\pp}(\sqrt L f)(x),\quad \forall x\in \R^n.
\end{equation}
Then by Lemma \ref{lem:NpBoundedness}, we have, for  $p\in \Wcal_w(p_-(L),(p_+(L))^*)$,
\begin{equation}\label{eq:Lp}
 \norm{\Ncal_{\pp}(\sqrt L f)}_{L^p(w)}\lesssim \norm{\sqrt L f}_{L^p(w)}.
\end{equation}
Besides, \cite[Theorem 1.1]{PA17} shows that  for any $p,q\in \Wcal_w(p_-(L), p_+(L))$, the weighted Lebesgue space $L^p(w)$ and the Hardy space $H_{\Scal_{\hh},q}^p(w)$ (see the definition in \cite{PA17})  are isomorphic with equivalent norms. This and Lemma \ref{lem:NpBoundedness} readily give that for any fixed $q\in \Wcal_w(p_-(L), p_+(L))$,
\begin{equation}\label{eq:HpLp}
\Ncal_{\pp}: H_{\Scal_{\hh},q}^p(w) \to L^p(w), \quad \forall p\in \Wcal_w(p_-(L), p_+(L)).
\end{equation}
Furthermore,  by \cite[Theorems 3.9 and 3.11]{MaPAII17} we also have that
\begin{equation}\label{eq:H1L1}
\Ncal_{\pp}: H_{\Scal_{\hh},q}^1(w) \to L^1(w).
\end{equation}
Hence, in view of the interpolation result \cite[Theorem 5.1]{PA17}, by \eqref{eq:HpLp} and \eqref{eq:H1L1}, we get
\begin{equation*}
\Ncal_{\pp}: H_{\Scal_{\hh},q}^p(w) \to L^p(w), \quad 1\le  p <\frac{p_+(L)}{s_w},
\end{equation*}
and thus
\begin{equation}\label{eq:Lp-1}
 \norm{\Ncal_{\pp}(\sqrt L f)}_{L^p(w)}\lesssim \norm{\sqrt L f}_{H_{\Scal_{\hh},q}^p(w)}, \quad 1\le  p <\frac{p_+(L)}{s_w}.
\end{equation}
Next, from the weighted Hardy space Riesz transform characterization (see \cite[Propositions 9.1]{PA17}), it follows that, for all $f\in \mathcal{S}$,
\begin{equation}\label{eq:RT}
\norm{\sqrt L f}_{H_{\Scal_{\hh},q}^p(w)} \lesssim \norm{\nabla f}_{L^p(w)},  \quad \max\bbr{r_w, \frac{nr_w\widehat p_-(L)}{nr_w+\widehat p_-(L)}}<p<\frac{p_+(L)}{s_w}.
\end{equation}
Finally, combining \eqref{eq:comp}, \eqref{eq:Lp-1}, and \eqref{eq:RT}, we obtain that, for $\max\bbr{r_w, \frac{nr_w\widehat p_-(L)}{nr_w+\widehat p_-(L)}}<p<\frac{p_+(L)}{s_w}$, 
$$
\norm{\Ncal \big(\partial_t e^{-t\sqrt L}f\big)}_{L^p(w)} \lesssim  \norm{\Ncal_{\pp}(\sqrt L f)}_{L^p(w)}
\lesssim \norm{\sqrt L f}_{H_{\Scal_{\hh},q}^p(w)} \lesssim \norm{\nabla f}_{L^p(w)},
$$
for all $ f\in \mathcal{S}$. This completes the proof.\qed

\subsection{Proof of Proposition \ref{prop:NcalGrad}}\label{section:proof-nabla}
%------------------------------------------------------------------------------------------------------------------------------------------------------------%

We split the proof into two steps. In \textbf{Step 1} we  obtain \eqref{eq:weightedG} for all  
$\max\{r_w,\widehat{q}_-(L)\}<p<\frac{q_+(L)}{s_w}$; and,  in \textbf{Step 2} we show that the same estimate holds in a bigger range, namely for all $\max\left\{r_w,\frac{nr_w\widehat q_-(L)}{nr_w+\widehat q_-(L)}\right\}<p<\frac{q_+(L)}{s_w}$. 

The following claims are 
common to both  steps, so we start by proving them.
\begin{claim}\label{claim:1}Under the hypothesis of Proposition  \ref{prop:NcalGrad}, for any $\max\{r_w,\widehat{q}_-(L)\}<q_0<q_+(L)/s_w$, there holds
$$
\left(\dashint_{B(x,t)} |\nabla e^{-t^2L} f|^2 dz\right)^{\frac{1}{2}}
\leq
\sum_{l\geq 1}e^{-c4^l}
\left(\dashint_{B(x,2^{l+1}t)} |\nabla  S_{t/\sqrt{2}}f|^{q_0} dw\right)^{\frac{1}{q_0}},
$$
where $ S_{t/\sqrt{2}}$ could be equal to $e^{-\frac{t^2}{4}L}$ or the identity.
\end{claim}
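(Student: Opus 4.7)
The plan is to follow the template of the proof of Proposition~\ref{prop:Gsum} with two modifications: keeping the unweighted $L^2(dz)$-norm on the left (instead of a weighted one) and passing to weighted $L^{q_0}(dw)$-averages only on the annular pieces of the right-hand side. The two choices of $S_{t/\sqrt{2}}$ encode two applications of the same machinery. For $S_{t/\sqrt{2}}=\mathrm{Id}$, I would take $H(\cdot,s)=f$ (independent of $s$), $s=t$, and $\lambda=1$ in the first estimate of Proposition~\ref{prop:Gsum}. For $S_{t/\sqrt{2}}=e^{-t^2/4\,L}$, I would use the semigroup identity $e^{-t^2 L}=e^{-3t^2/4\,L}\circ e^{-t^2/4\,L}$, take $H(\cdot,s)=e^{-t^2/4\,L}f$ (constant in $s$), $s=\sqrt{3}\,t/2$ and $\lambda=2/\sqrt{3}$, so that $\lambda s=t$ and $\nabla e^{-s^2 L}H(\cdot,s)=\nabla e^{-t^2 L}f$ while $\nabla H(\cdot,s)=\nabla e^{-t^2/4\,L}f$.

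Concretely, I would fix $B=B(x,t)$ and choose an auxiliary exponent $q_1$ with $q_-(L)<q_1\le 2$. By the conservation property $\nabla e^{-t^2 L}(1)=0$, one may write $\nabla e^{-t^2 L}f=\nabla e^{-t^2 L}(f-c)$ for any constant $c$. Decompose $f-c=(f-c)\chi_{4B}+\sum_{j\ge 2}(f-c)\chi_{C_j(B)}$ and apply the unweighted $L^{q_1}$--$L^2$ full off-diagonal estimates of Proposition~\ref{prop:full-off} for $\{\sqrt{\tau}\,\nabla e^{-\tau L}\}$ (valid since $q_-(L)<q_1\le 2<q_+(L)$) to obtain
\[
\Bigl(\fint_B |\nabla e^{-t^2 L}(f-c)|^2\,dz\Bigr)^{\frac12}
\lesssim t^{-1}\sum_{j\ge 1} 2^{jn/q_1}\,e^{-c 4^j}\Bigl(\fint_{2^{j+1}B} |f-c|^{q_1}\,dz\Bigr)^{\frac{1}{q_1}}.
\]
On each annulus I would convert the $L^{q_1}(dz)$-average to an $L^{q_0}(dw)$-average via the first inequality of Lemma~\ref{lemma:sin-con} (which needs $w\in A_{q_0/q_1}$), then select $c$ as the weighted mean of $f$ on $4B$ (or of $e^{-t^2/4\,L}f$ in the second case) and perform a standard telescoping argument using the weighted Poincar\'e inequality (available because $q_0>r_w$ and hence $w\in A_{q_0}$). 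The Gaussian factor $e^{-c 4^j}$ absorbs the geometric growth $2^{jn/q_1}$ together with all constants coming from the measure-conversion and telescoping, delivering the desired sum $\sum_{l\ge 1}e^{-c 4^l}(\fint_{2^{l+1}B}|\nabla S_{t/\sqrt 2}f|^{q_0}dw)^{1/q_0}$.

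The main technical obstacle will be the joint selection of $q_1$: it must satisfy $q_1>q_-(L)$ (so that unweighted off-diagonal applies) and $q_1<q_0/r_w$ (so that Lemma~\ref{lemma:sin-con} applies). Such a choice is immediate in the regime $q_0>r_w\,q_-(L)$. In the residual regime $\widehat q_-(L)<q_0\le r_w\,q_-(L)$, still permitted by the hypothesis since in general $\widehat q_-(L)\le r_w\,q_-(L)$, one must instead invoke the weighted off-diagonal estimates at exponent $q_0$ provided by Proposition~\ref{prop:weightedOD}, run the entire argument in the weighted $L^{q_0}(dw)$--$L^{q_0}(dw)$ setting, and absorb the final Lebesgue-to-weighted passage on the ball $B$ itself by exploiting the doubling and reverse-H\"older self-improvement of the $A_{q_0}$-condition on $w$.
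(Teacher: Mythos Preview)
Your single-step scheme works when $q_0>r_w\,q_-(L)$, but the fallback you propose for the residual range $\max\{r_w,\widehat q_-(L)\}<q_0\le r_w\,q_-(L)$ does not close. If you run the whole argument in weighted $L^{q_0}(dw)$--$L^{q_0}(dw)$, you still must bound the unweighted quantity $\bigl(\fint_{B}|\nabla e^{-t^2L}f|^2\,dz\bigr)^{1/2}$ by $\bigl(\fint_{B}|\nabla e^{-t^2L}f|^{q_0}\,dw\bigr)^{1/q_0}$. By Lemma~\ref{lemma:sin-con} this would require $q_0\ge 2$ \emph{and} $w\in A_{q_0/2}$, neither of which follows from $q_0>r_w$; in particular $q_0$ can be strictly below $2$. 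No ``reverse-H\"older self-improvement'' of the $A_{q_0}$ condition produces such an inequality for a generic function.

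The paper bypasses this obstruction with a two-step iteration rather than a single passage. First apply the \emph{second} estimate of Proposition~\ref{prop:Gsum} in the unweighted setting with $q=2$ and $p=p_0$, where $q_-(L)<p_0<2$ is chosen so that $r_w\,p_0<q_+(L)/s_w$; this replaces $\nabla e^{-t^2L}f$ by $\nabla e^{-t^2L/2}f$ in unweighted $L^{p_0}$-averages over the annuli. Next use Lemma~\ref{lemma:sin-con} to pass from unweighted $L^{p_0}$ to weighted $L^q$ with an auxiliary exponent $q$ satisfying $\max\{q_0,r_w\,p_0\}<q<q_+(L)/s_w$ (so $w\in A_{q/p_0}$ automatically). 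Finally apply Proposition~\ref{prop:Gsum} once more, now in the weighted setting with exponents $q_0<q$ in $(\widehat q_-(L),\widehat q_+(L))$, to the remaining $\nabla e^{-t^2L/2}$ (here $\lambda=2^{j+1}\sqrt 2$, $s=t/\sqrt 2$); this is where the choice of $S_{t/\sqrt 2}$ enters, via the first or second estimate. The point is that the intermediate exponent $q$ is chosen \emph{large}, decoupling the constraint $q>r_w\,p_0$ from the target exponent $q_0$; the descent from $q$ to $q_0$ is then handled by the weighted off-diagonal estimates on the residual piece of the semigroup, not by Lemma~\ref{lemma:sin-con}.
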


\begin{claim}\label{claim:2}Under the hypothesis of Proposition  \ref{prop:NcalGrad}, for any $\max\{r_w,\widehat{q}_-(L)\}<q_0<q_+(L)/s_w$, there holds
\begin{multline*}
\int_{0}^{\frac{1}{4}}u^{\frac{1}{2}}
\left(\dashint_{B(x,t)} |\nabla (e^{-\frac{t^2}{4u} L}-e^{-t^2L}) f(z)|^2 dz\right)^{\frac{1}{2}}\frac{du}{u}
\\
\lesssim
 \sum_{l\geq 1}e^{-c2^l} \br{\,\fint_{B(x,2^{l+1}t)} |\wt\Grm_{\hh}  f|^{q_0} dw}^{\frac1{q_0}}.
\end{multline*}
\end{claim}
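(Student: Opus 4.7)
The plan is to reduce the semigroup difference to a time integral in $\tau$, apply Fubini together with a Cauchy-Schwarz to bring in the inhomogeneous vertical square function $\wt\Grm_{\hh}f$, and then convert the resulting unweighted $L^2$-average on $B(x,t)$ into the claimed sum of weighted $L^{q_0}(w)$-averages on dilated balls through an off-diagonal decomposition.

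Concretely, I would start from the identity $e^{-\frac{t^2}{4u}L}f - e^{-t^2L}f = -\int_{t^2}^{t^2/(4u)}Le^{-sL}f\,ds$ and substitute $s=\tau^2$ to obtain $\nabla(e^{-\frac{t^2}{4u}L}-e^{-t^2L})f = -2\int_t^{t/(2\sqrt u)}\tau\,\nabla L e^{-\tau^2L}f\,d\tau$. Taking the $L^2$-average over $B(x,t)$ and using Minkowski's integral inequality in the $\tau$-variable, then swapping the orders of integration in $u$ and $\tau$ via Fubini (for each $\tau\geq t$ the admissible $u$-range is $(0, t^2/(4\tau^2))$, with $\int_0^{t^2/(4\tau^2)}u^{-1/2}\,du = t/\tau$), the left-hand side is dominated by $2t\int_t^\infty\big(\dashint_{B(x,t)}|\nabla L e^{-\tau^2 L}f|^2 dz\big)^{1/2}d\tau$. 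Rewriting $\nabla L e^{-\tau^2L}f = \tau^{-2}\cdot\tau^2\nabla L e^{-\tau^2L}f$ and applying Cauchy-Schwarz in $\tau$ against $d\tau/\tau$ (using $\int_t^\infty d\tau/\tau^3 = 1/(2t^2)$) then gives
\[
\int_0^{1/4}u^{1/2}\bigg(\dashint_{B(x,t)}|\nabla(e^{-\frac{t^2}{4u}L}-e^{-t^2L})f|^2 dz\bigg)^{1/2}\frac{du}{u}\lesssim \bigg(\dashint_{B(x,t)}|\wt\Grm_{\hh}f|^2\,dz\bigg)^{1/2}.
\]

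To close the argument, I would replace the unweighted $L^2$-average above by the sum of weighted $L^{q_0}(w)$-averages on the dilated balls $B(x,2^{l+1}t)$. For this, I would interleave an annular dyadic decomposition $f = \sum_{l\geq 1}f\chi_{C_l(B(x,t))}$ before performing the Cauchy-Schwarz step, and invoke the weighted off-diagonal estimates on balls for $\tau^3\nabla L e^{-\tau^2L}\in\mathcal O(L^{q_0}(w)-L^{q_0}(w))$ from Proposition \ref{prop:weightedOD} (valid since $q_0\in(\wh q_-(L),\wh q_+(L))$), combined with Lemma \ref{lemma:sin-con} to pass between $dz$- and $dw$-averages. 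Each annular piece of index $l$ contributes a pointwise off-diagonal factor of the form $e^{-c4^l t^2/\tau^2}$; integrating in $\tau\in[t,\infty)$ and recognizing $\wt\Grm_{\hh}f$ in the reassembled square-function integral turns each contribution into a weighted $L^{q_0}$-average of $\wt\Grm_{\hh}f$ on $B(x,2^{l+1}t)$, with overall exponential decay $e^{-c2^l}$ (one power of $2$ in the exponent is lost to the unbounded $\tau$-range).

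The main obstacle is precisely this last step. Because the hypothesis $q_0\in(\max\{r_w,\wh q_-(L)\},\,q_+(L)/s_w)$ does not in general force $q_0\geq 2r_w$, a single application of Lemma \ref{lemma:sin-con} cannot directly upgrade the unweighted $L^2$-average of $\wt\Grm_{\hh}f$ to a weighted $L^{q_0}(w)$-average on $B(x,t)$. The careful interplay between the annular off-diagonal decomposition for $\tau^2\nabla L e^{-\tau^2L}$ and the $\tau$-integration defining $\wt\Grm_{\hh}$ is therefore essential, and it is what both produces the weaker decay $e^{-c2^l}$ (as opposed to the $e^{-c4^l}$ appearing in Claim \ref{claim:1}) and ensures that $\wt\Grm_{\hh}f$, rather than $f$ or $\nabla f$, appears on the right-hand side.
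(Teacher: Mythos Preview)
Your initial reduction is clean and correct: after the Fubini and Cauchy--Schwarz steps you do arrive at
\[
\int_{0}^{1/4}u^{\frac12}\Big(\dashint_{B(x,t)}|\nabla(e^{-\frac{t^2}{4u}L}-e^{-t^2L})f|^2\,dz\Big)^{\frac12}\frac{du}{u}
\;\lesssim\;
\Big(\dashint_{B(x,t)}|\wt\Grm_{\hh}f|^2\,dz\Big)^{\frac12}.
\]
The gap is in the last step. Once $\wt\Grm_{\hh}f$ has been formed as a pointwise function, there is no semigroup structure left to which off-diagonal estimates apply; and your proposed cure---decomposing $f=\sum_{l}f\chi_{C_l(B(x,t))}$ \emph{before} Cauchy--Schwarz and using $\tau^3\nabla L e^{-\tau^2L}\in\mathcal O(L^{q_0}(w)-L^{q_0}(w))$---produces, for each $l$ and $\tau$, a weighted $L^{q_0}$-average of $|f|$ on $C_l(B(x,t))$, not an average of $|\wt\Grm_{\hh}f|$. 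There is no mechanism by which ``reassembling'' the $\tau$-integral afterwards would reconstitute the square function on the right-hand side: the spatial localization of $f$ and the $\tau$-integral defining $\wt\Grm_{\hh}$ live on different variables and do not recombine. (Also, since $\tau\ge t$ while the annuli are at scale $2^l t$, the off-diagonal factor $e^{-c4^lt^2/\tau^2}$ gives no decay in $l$ once $\tau\gtrsim 2^l t$, so summability in $l$ is not clear either.)

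The paper's argument avoids this by factoring the \emph{operator}, not the function. One writes
\[
\nabla(e^{-\frac{t^2}{4u}L}-e^{-t^2L})f
=\nabla e^{-\frac{t^2}{2}L}\big(e^{-(\frac{t^2}{4u}-\frac{t^2}{2})L}-e^{-\frac{t^2}{2}L}\big)f,
\]
and applies Proposition~\ref{prop:Gsum} (twice, with Lemma~\ref{lemma:sin-con} in between) to the outer factor $\nabla e^{-\frac{t^2}{2}L}$. This proposition is precisely designed to bound $L^2(dz)$-averages of $\nabla e^{-s^2L}H$ by weighted $L^{q_0}$-averages of $\nabla H$ on dilated balls (the Poincar\'e step inside its proof is what makes the gradient---rather than $H$ itself---appear on the right). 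One thus obtains a sum $\sum_l e^{-c2^l}\big(\fint_{B(x,2^{l+1}t)}|\nabla(e^{-(\frac{t^2}{4u}-\frac{t^2}{2})L}-e^{-\frac{t^2}{2}L})f|^{q_0}\,dw\big)^{1/q_0}$, and only \emph{then} bounds the integrand pointwise by $(\log u^{-1})^{1/2}\,\wt\Grm_{\hh,t}f(z)\le(\log u^{-1})^{1/2}\,\wt\Grm_{\hh}f(z)$ via the fundamental theorem of calculus and Cauchy--Schwarz in $s$. The $u$-integral $\int_0^{1/4}u^{-1/2}(\log u^{-1})^{1/2}\,du<\infty$ closes the estimate. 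In short: the passage from unweighted $L^2$ to weighted $L^{q_0}$ must happen at the level of $\nabla e^{-s^2L}(\,\cdot\,)$ via Proposition~\ref{prop:Gsum}, \emph{before} the pointwise square-function bound, not via an annular decomposition of $f$.
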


\begin{claim}\label{claim:3}Under the hypothesis of Proposition  \ref{prop:NcalGrad}, there holds
$$
\int_{\frac{1}{4}}^{\infty}e^{-u}u^{\frac{1}{2}}
\left(\dashint_{B(x,t)} |\nabla (e^{-\frac{t^2}{4u} L}-e^{-t^2L}) f(z)|^2 dz\right)^{\frac{1}{2}}\frac{du}{u}
\lesssim
\int_{\frac{1}{4}}^{\infty}u\,e^{-u}\mathrm{G}^{2\sqrt{u}}_{\hh}(\sqrt{L}f)(x)\frac{du}{u},
$$
where
$$
\mathrm{G}^{2\sqrt{u}}_{\hh}(\sqrt{L}f)(x):=
\left(\int_{B(x,2\sqrt{u}s)}\int_{0}^{\infty}\big|s \nabla s \sqrt{L} e^{-s^2 L} \sqrt{L}f(z)\big|^2 \frac{dz\,ds}{s^{n+1}} \right)^{\frac{1}{2}}.
$$
\end{claim}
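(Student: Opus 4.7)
The plan is to represent $e^{-\frac{t^2}{4u}L}-e^{-t^2 L}$ as an $r$-integral via the fundamental theorem of calculus, apply Minkowski's integral inequality to move the $L^2$-average inside, and then use Cauchy--Schwarz in $r$ to recognize the conical square function $\mathrm{G}^{2\sqrt u}_{\hh}$. Since $\frac{d}{dr}e^{-r^2 L}=-2rLe^{-r^2 L}$ and $Le^{-r^2 L}=\sqrt{L}\,e^{-r^2 L}\sqrt{L}$ by the bounded functional calculus (legitimate for $f\in\mathcal{S}$ as $\sqrt{L}f\in L^2(\R^n)$), I would first write
\[
\nabla\bigl(e^{-\frac{t^2}{4u}L}-e^{-t^2 L}\bigr)f(z)
=2\int_{t/(2\sqrt u)}^{t} r\,\nabla\sqrt{L}\,e^{-r^2 L}(\sqrt{L} f)(z)\,dr.
\]
Note that for $u\ge 1/4$ the interval of integration is non-empty, and the key geometric observation is that $2\sqrt{u}\,r\ge t$ for every $r\in[t/(2\sqrt{u}),t]$, hence $B(x,t)\subset B(x,2\sqrt{u}\,r)$.

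The main step is, setting $I(r):=\int_{B(x,2\sqrt{u}r)}|\nabla\sqrt{L}\,e^{-r^2 L}(\sqrt{L} f)(z)|^2\,dz$, to apply Minkowski in $L^2(B(x,t),dz/|B(x,t)|)$ and then enlarge the ball of integration to $B(x,2\sqrt{u}\,r)$ at the cost of a factor $|B(x,t)|^{-1/2}$, which yields
\[
\Bigl(\dashint_{B(x,t)}\big|\nabla(e^{-\frac{t^2}{4u}L}-e^{-t^2 L})f(z)\big|^2\,dz\Bigr)^{1/2}
\lesssim |B(x,t)|^{-1/2}\int_{t/(2\sqrt u)}^{t} r\,I(r)^{1/2}\,dr.
\]
A short computation with Fubini shows that
\[
\bigl[\mathrm{G}^{2\sqrt u}_{\hh}(\sqrt{L} f)(x)\bigr]^2=\int_0^{\infty} r^{3-n}\,I(r)\,dr,
\]
so splitting $r=r^{(n-1)/2}\cdot r^{(3-n)/2}$ and applying Cauchy--Schwarz in $r$, together with $\int_{t/(2\sqrt u)}^{t}r^{n-1}\,dr\le t^n/n$ and $|B(x,t)|\sim t^n$, gives the pointwise bound
\[
\Bigl(\dashint_{B(x,t)}\big|\nabla(e^{-\frac{t^2}{4u}L}-e^{-t^2 L})f(z)\big|^2\,dz\Bigr)^{1/2}
\lesssim \mathrm{G}^{2\sqrt u}_{\hh}(\sqrt{L} f)(x).
\]

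Finally I would integrate against $e^{-u}u^{1/2}\,\frac{du}{u}=e^{-u}u^{-1/2}\,du$ over $[1/4,\infty)$: since $u^{-1/2}\le 2$ on this range, the left-hand side of the claim is controlled by
\[
\int_{1/4}^{\infty}e^{-u}\mathrm{G}^{2\sqrt u}_{\hh}(\sqrt{L} f)(x)\,du=\int_{1/4}^{\infty}u\,e^{-u}\mathrm{G}^{2\sqrt u}_{\hh}(\sqrt{L} f)(x)\,\frac{du}{u},
\]
which is the right-hand side of the claim. I do not anticipate a real obstacle here; the only delicate point is aligning the Cauchy--Schwarz split in $r$ with the exponent $r^{3-n}$ dictated by the definition of $\mathrm{G}^{2\sqrt u}_{\hh}$, but this choice is forced once the identity $[\mathrm{G}^{2\sqrt u}_{\hh}]^2=\int_0^\infty r^{3-n}\,I(r)\,dr$ is in view.
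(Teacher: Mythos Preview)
Your proof is correct and follows essentially the same route as the paper: the fundamental theorem of calculus to write the difference as an $r$-integral, the containment $B(x,t)\subset B(x,2\sqrt{u}\,r)$ for $r\ge t/(2\sqrt{u})$, and a Cauchy--Schwarz argument in $r$ to reach the conical square function. The only minor variation is that the paper applies Cauchy--Schwarz pointwise in $z$ with weight $dr/r$ (picking up a factor $(\log 2\sqrt{u})^{1/2}\lesssim u^{1/2}$) before averaging over $B(x,t)$, whereas you first apply Minkowski in $z$ and then Cauchy--Schwarz with the split $r=r^{(n-1)/2}\cdot r^{(3-n)/2}$, which is slightly sharper (no extraneous $u^{1/2}$) but leads to the same conclusion.
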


In order to prove these claims,  fix $p_0$ and $q$ such that $q_-(L)<p_0<2$, $r_wp_0<q_+(L)/s_w$, and  $\max\{q_0,r_wp_0\}<q<q_+(L)/s_w$.

\begin{proof}[Proof of Claim \ref{claim:1}]
First, apply the second estimate in  Proposition \ref{prop:Gsum} with $w\equiv 1$, $\lambda=1$, $s=t$, $q=2$,  $p=p_0$, and $H(z,t)=f(z)$ for all $(z,t)\in \R^{n+1}_+$. Then
\begin{align}\label{claim11}
\left(\dashint_{B(x,t)} |\nabla e^{-t^2L} f(z)|^2 dz\right)^{\frac{1}{2}}\lesssim \sum_{j\geq 1}e^{-c4^j}
\left(\dashint_{B(x,2^{j+1}t)} |\nabla e^{-\frac{t^2}{2}L}  f(z)|^{p_0} dz\right)^{\frac{1}{p_0}}.
\end{align}
Besides, note that since $w\in A_{\frac{q}{p_0}}$, by Lemma \ref{lemma:sin-con}, we have
\begin{align}\label{claim12}
\left(\dashint_{B(x,2^{j+1}t)} |\nabla e^{-\frac{t^2}{2}L}  f(z)|^{p_0} dz\right)^{\frac{1}{p_0}}
\lesssim \left(\dashint_{B(x,2^{j+1}t)} |\nabla e^{-\frac{t^2}{2}L}  f(z)|^{q} dw\right)^{\frac{1}{q}}.
\end{align}
Also, since $\max\{r_w,\widehat{q}_-(L)\}<q_0<q<q_+(L)/s_w\leq\widehat{q}_+(L)$,  again by Proposition \ref{prop:Gsum} (we use the first or the second estimate depending on whether $S_{t/\sqrt{2}}$ is the identity or $e^{-\frac{t^2}{4}L}$ respectively) with $\lambda=2^{j+1}\sqrt{2}$ and $s=t/\sqrt{2}$, we obtain
\begin{align*}
\left(\dashint_{B(x,2^{j+1}t)} |\nabla e^{-\frac{t^2}{2}L}  f(z)|^{q} dw\right)^{\frac{1}{q}}
&\lesssim
e^{-c4^j}\sum_{i\geq 1}e^{-c4^i}
\left(\dashint_{B(x,2^{j+i+2}t)} |\nabla S_{t/\sqrt{2}} f(z)|^{q_0} dw\right)^{\frac{1}{q_0}}
.
\end{align*}
This, \eqref{claim11} and \eqref{claim12}
imply 
\begin{align*}
\left(\dashint_{B(x,t)} |\nabla e^{-t^2L} f(z)|^2 dz\right)^{\frac{1}{2}}
&\lesssim
\sum_{j\geq 1}e^{-c4^j}\sum_{i\geq 1}e^{-c4^i}
\left(\dashint_{B(x,2^{j+i+2}t)} |\nabla S_{t/\sqrt{2}} f(z)|^{q_0} dw\right)^{\frac{1}{q_0}}
\\\nonumber
&
\lesssim
\sum_{l\geq 1}e^{-c2^l}
\left(\dashint_{B(x,2^{l+1}t)} |\nabla  S_{t/\sqrt{2}} f(z)|^{q_0} dw\right)^{\frac{1}{q_0}},
\end{align*}
which gives the desired estimate.
\end{proof}

\begin{proof}[Proof of Claim \ref{claim:2}]
We proceed as in Claim \ref{claim:1}. First apply the second estimate in Proposition \ref{prop:Gsum} with $w\equiv 1$, $\lambda=\sqrt{2}$, $s=t/\sqrt{2}$, $q=2$,  and $p=p_0$. Next, apply Lemma \ref{lemma:sin-con}; and finally, apply  the first estimate in Proposition \ref{prop:Gsum}, with $\lambda=2^{j+2}$, and $s=t/2$, to obtain for every $0<u<1/4$  
\begin{align}\label{1:II1}
&\left(\dashint_{B(x,t)} |\nabla (e^{-\frac{t^2}{4u} L}-e^{-t^2L}) f(z)|^2 dz\right)^{\frac{1}{2}}
\\\nonumber&\qquad
=
\left(\dashint_{B(x,t)} |\nabla e^{-\frac{t^2}{2}L} (e^{-\left(\frac{t^2}{4u}-\frac{t^2}{2}\right) L}-e^{-\frac{t^2}{2}L}) f(z)|^2 dz\right)^{\frac{1}{2}}
\\\nonumber&\qquad
\lesssim\sum_{j\geq 1}e^{-c4^j}
\left(\dashint_{B(x,2^{j+1}t)} |\nabla  e^{-\frac{t^2}{4}L} (e^{-\left(\frac{t^2}{4u}-\frac{t^2}{2}\right) L}-e^{-\frac{t^2}{2}L}) f(z)|^{p_0} dz\right)^{\frac{1}{p_0}}
\\\nonumber&\qquad
\lesssim\sum_{j\geq 1}e^{-c4^j}
\left(\dashint_{B(x,2^{j+1}t)} |\nabla e^{-\frac{t^2}{4}L} (e^{-\left(\frac{t^2}{4u}-\frac{t^2}{2}\right) L}-e^{-\frac{t^2}{2}L}) f(z)|^{q} dw\right)^{\frac{1}{q}}
\\\nonumber&\qquad
\lesssim\sum_{j\geq 1}e^{-c4^j}\sum_{i\geq 1}e^{-c4^i}
\left(\dashint_{B(x,2^{j+i+2}t)} |\nabla  (e^{-\left(\frac{t^2}{4u}-\frac{t^2}{2}\right) L}-e^{-\frac{t^2}{2}L}) f(z)|^{q_0} dw\right)^{\frac{1}{q_0}}.
\end{align}
Moreover, for $0<u<1/4$,
\begin{multline}\label{2:II1}
\big|\nabla  \big(e^{-(\frac{1}{4u}-\frac{1}{2}) t^2 L}- e^{-\frac{t^2}{2} L}\big)f(z)\big| 
\leq 
\int_{\frac{t}{\sqrt 2}}^{t\sqrt{\frac{1}{4u}-\frac{1}{2}}} \big|\partial_s \nabla  e^{-s^2 L} f(z)\big| ds
\\
\lesssim 
\int_{\frac{t}{\sqrt 2}}^{t\sqrt{\frac{1}{4u}}} \big|s^2\nabla  L e^{-s^2 L} f(z)\big| \frac{ds}{s}
\lesssim
\br{\int_{\frac{t}{2}}^{\infty} \big|s^2 \nabla  L e^{-s^2 L} f(z)\big|^2 \frac{ds}{s}}^{\frac12} \br{\log{{u}^{-1}}}^{\frac12}
\\
\lesssim
\,\br{\log{{u}^{-1}}}^{\frac12} \wt\Grm_{\hh,t} ( f)(z), 
\end{multline}
where $\wt\Grm_{\mathrm H,t}$ is defined as
\begin{align}\label{def:WTfG}
\wt\Grm_{\mathrm H,t}f(x):=\br{\int_{\frac{t}{2}}^{\infty} \big|s^2 \nabla  L e^{-s^2 L} f(z)\big|^2 \frac{ds}{s}}^{\frac12}.
\end{align}
 Therefore, by \eqref{1:II1} and \eqref{2:II1},
\begin{align}\label{lateruse}
&\left(\dashint_{B(x,t)} |\nabla (e^{-\frac{t^2}{4u} L}-e^{-t^2L}) f(z)|^2 dz\right)^{\frac{1}{2}}
\\\nonumber&\qquad\lesssim
 \sum_{j\geq 1} \sum_{i\geq 1} e^{-c2^{j+i+1}} \int_{0}^{\frac{1}{4}}  u^{\frac12}\br{\log{{u}^{-1}}}^{\frac12} \frac{du}{u}\br{\,\fint_{B(x,2^{j+i+2}t)} |\wt\Grm_{\hh,t}  f (z)|^{q_0} dw}^{\frac1{q_0}} 
 \\\nonumber&\qquad\lesssim
 \sum_{l\geq 1}e^{-c2^l} \br{\,\fint_{B(x,2^{l+1}t)} |\wt\Grm_{\hh,t}  f (z)|^{q_0} dw}^{\frac1{q_0}}.
\end{align}
This completes the proof of the present claim.
\end{proof}

\begin{proof}[Proof of Claim \ref{claim:3}]

First, note that for $1/4\leq u<\infty$,
\begin{align*}
\big|\nabla  \big(e^{-\frac{t^2}{4u}L}- e^{-t^2 L}\big)f(z)\big| &
\leq 
\int_{\frac{t}{2\sqrt{u}}}^{\frac{t}{\sqrt 2}}
 \big|\partial_s \nabla  e^{-s^2 L} f(z)\big| ds
\\&
\lesssim {u}^{\frac{1}{2}}
\br{\int_{\frac{t}{2\sqrt{u}}}^{\frac{t}{\sqrt 2}}\big|s \nabla s \sqrt{L} e^{-s^2 L} \sqrt{L}f(z)\big|^2 \frac{ds}{s}}^{\frac12}.
\end{align*}
Thus,
\begin{align*}
&\int_{\frac{1}{4}}^{\infty}e^{-u}u^{\frac{1}{2}}
\left(\dashint_{B(x,t)} |\nabla (e^{-\frac{t^2}{4u} L}-e^{-t^2L}) f(z)|^2 dz\right)^{\frac{1}{2}}\frac{du}{u}
\\&\qquad\lesssim
\int_{\frac{1}{4}}^{\infty}u\,e^{-u}
\left(\int_{B(x,2\sqrt{u}s)}\int_{0}^{\infty}\big|s \nabla s \sqrt{L} e^{-s^2 L} \sqrt{L}f(z)\big|^2 \frac{dz\,ds}{s^{n+1}} \right)^{\frac{1}{2}}\frac{du}{u}
\\\nonumber&\qquad=
\int_{\frac{1}{4}}^{\infty}u\,e^{-u}\mathrm{G}^{2\sqrt{u}}_{\hh}(\sqrt{L}f)(x)\frac{du}{u},
\end{align*}
which is the desired estimate.
\end{proof}
	
\medskip

Once we have proved the claims, we can start discussing the two cases into which we split the proof of Proposition \ref{prop:NcalGrad}.

\medskip

\noindent\textbf{Step 1:}
$\max\{r_w,\widehat{q}_-(L)\}<p<\frac{q_+(L)}{s_w}$.

First of all, note that proceeding similarly  as in \cite[(4.25)]{May10} one can show that for every fixed $x\in \R^n$
\begin{align}\label{original}
\Ncal(\nabla  e^{-t\sqrt L}f)(x)  
&\lesssim
\sup_{t>0} \br{\,\fint_{B(x,t)} |\nabla e^{-t\sqrt L} f(z)|^2 dz}^{\frac12}
\\ \nonumber
&\le
\sup_{t>0} 
\left(\dashint_{B(x,t)} |\nabla e^{-t^2L} f(z)|^2 dz\right)^{\frac{1}{2}}
\\ \nonumber
&\qquad \quad
+
\sup_{t>0} \left(\dashint_{B(x,t)} |\nabla (e^{-t\sqrt L}-e^{-t^2L}) f(z)|^2 dz\right)^{\frac{1}{2}}
&
\\ \nonumber
&=:I + II.
\end{align}

Since $\max\{r_w,\widehat{q}_-(L)\}<p<\frac{q_+(L)}{s_w}$, we can choose $q_0$ so that $\max\{r_w,\widehat{q}_-(L)\}<q_0<p<\frac{q_+(L)}{s_w}$.
% and $q$ so that $\max\{p,r_wq_-(L)\}<q<\frac{q_+(L)}{s_w}$.
% Note that these choices guarantee that $q_0<q$ and $q\in \mathcal{W}_w(q_-(L),q_+(L))$ (see \eqref{intervalrs}), hence $w\in A_{\frac{q}{q_-(L)}}$. Therefore, we can select $p_0$ satisfying $q_-(L)<p_0<\max\{2,q\}$ so that $w\in A_{\frac{q}{p_0}}$.
Hence by Claim \ref{claim:1} with $S_t$ equal to the identity, we obtain 
\begin{align}\label{termI}
I&
\lesssim
\sup_{t>0}
\sum_{l\geq 1}e^{-c2^l}
\left(\dashint_{B(x,2^{l+1}t)} |\nabla  f(z)|^{q_0} dw\right)^{\frac{1}{q_0}}
\lesssim
\mathcal{M}_{q_0}^w(\nabla  f)(x).
\end{align}
On the other hand, note that applying the following subordination formula:
\begin{align}\label{eqn:subord}
e^{-t\sqrt{L}}f=C\int_{0}^{\infty}u^{\frac{1}{2}}e^{-u}e^{-\frac{t^2}{4u}L}f\frac{du}{u},
\end{align}
and Minkowski's integral inequality, we have 
\begin{multline}\label{termII}
II\lesssim 
\sup_{t>0}\int_{0}^{\frac{1}{4}}u^{\frac{1}{2}}
\left(\dashint_{B(x,t)} |\nabla (e^{-\frac{t^2}{4u} L}-e^{-t^2L}) f(z)|^2 dz\right)^{\frac{1}{2}}\frac{du}{u}
\\
+
\sup_{t>0} \int_{\frac{1}{4}}^{\infty}e^{-u}u^{\frac{1}{2}}
\left(\dashint_{B(x,t)} |\nabla (e^{-\frac{t^2}{4u} L}-e^{-t^2L}) f(z)|^2 dz\right)^{\frac{1}{2}}\frac{du}{u}=:II_1+II_2.
\end{multline}
%%%%%%%%%%%%%%%%%%%%%%%%%%%%%%%%%%%%%%%%%%%%%%%%%%%%%%%%%%%%%%%
In order to estimate $II_1$, we apply Claim \ref{claim:2}:
\begin{multline}\label{lateruse-2}
II_1
\lesssim
\sup_{t>0} \sum_{l\geq 1}e^{-c2^l} \br{\,\fint_{B(x,2^{l+1}t)} |\wt\Grm_{\hh,t}  f (z)|^{q_0} dw}^{\frac1{q_0}} 
 \\
 \lesssim
 \sup_{t>0} \sum_{l\geq 1}e^{-c2^l}  \br{\,\fint_{B(x,2^{l+1}t)} |\wt\Grm_{\hh}  f (z)|^{q_0} dw}^{\frac1{q_0}} \lesssim
 \Mcal_{q_0}^w (\wt\Grm_{\hh} f )(x),
\end{multline}
where $\wt\Grm_{\hh}$ is
the inhomogeneous vertical square function defined in \eqref{eq:wtG}. On the other hand, for $1/4\leq u<\infty$, one can see that
\begin{multline*}
\big|\nabla  \big(e^{-\frac{t^2}{4u}L}- e^{-t^2 L}\big)f(z)\big| 
\leq 
\int_{\frac{t}{2\sqrt{u}}}^{\frac{t}{\sqrt 2}}
 \big|\partial_s \nabla  e^{-s^2 L} f(z)\big| ds
\\
\lesssim
\br{\int_{\frac{t}{2\sqrt{u}}}^{\frac{t}{\sqrt 2}}\big|s \nabla s \sqrt{L} e^{-s^2 L} \sqrt{L}f(z)\big|^2 \frac{ds}{s}}^{\frac12} \br{\log{{u}^{\frac{1}{2}}}}^{\frac12}.
\end{multline*}
Next,  applying Claim \ref{claim:3} we get
\begin{align*}
II_2\lesssim
\int_{\frac{1}{4}}^{\infty}u\,e^{-u}\mathrm{G}^{2\sqrt{u}}_{\hh}(\sqrt{L}f)(x)\frac{du}{u}.
\end{align*}
This, \eqref{original}, \eqref{termI}, \eqref{termII}, and \eqref{lateruse-2} imply that, for $\max\left\{r_w,\widehat{q}_-(L)\right\}<q_0<p$,
\begin{align}\label{inequalityw}
\Ncal(\nabla  e^{-t\sqrt L}f)(x)  
\leq 
\mathcal{M}_{q_0}^w(\nabla  f)(x)+ \Mcal_{q_0}^w (\wt\Grm_{\hh} f )(x)
+
\int_{\frac{1}{4}}^{\infty}u\,e^{-u}\mathrm{G}^{2\sqrt{u}}_{\hh}(\sqrt{L}f)(x)\frac{du}{u}.
\end{align}
Consequently, for all $\max\left\{r_w,\widehat{q}_-(L)\right\}<p<\frac{q_+}{s_w}$,  by the boundedness of $\mathcal{M}_{q_0}^w$ on $L^p(w)$, recall that $p/q_0>1$, and change of angles (see \cite[Proposition 3.2]{MaPAI17}), we conclude that
\begin{align*}
&\|\Ncal(\nabla  e^{-t\sqrt L}f)\|_{L^p(w)}
\\
&\qquad\lesssim 
\|\mathcal{M}_{q_0}^w(\nabla  f)\|_{L^p(w)}+ \|\Mcal_{q_0}^w (\wt\Grm_{\hh} f )\|_{L^p(w)}
+
\int_{\frac{1}{4}}^{\infty}u\,e^{-u}\|\mathrm{G}^{2\sqrt{u}}_{\hh}(\sqrt{L}f)\|_{L^p(w)}\frac{du}{u}
\\
&\qquad\lesssim 
\|\nabla  f\|_{L^p(w)}+ \|\wt\Grm_{\hh} f \|_{L^p(w)}
+
\|\mathrm{G}_{\hh}(\sqrt{L}f)\|_{L^p(w)}
\\
&\qquad\lesssim 
\|\nabla  f\|_{L^p(w)},
\end{align*}
where  the last inequality follows from Theorem \ref{thm:wtG}, \eqref{usedfinal1}, and \eqref{usedfinal2}. This completes the proof  of \textbf{Step 1}.

\medskip

\noindent\textbf{Step 2:} $\max\left\{r_w,\frac{nr_w\widehat q_-(L)}{nr_w+\widehat q_-(L)}\right\}<p<\frac{q_+(L)}{s_w}$ .

We start by observing that \textbf{Step 1} leads to 
\begin{align}\label{interpol1}
\Ncal(\nabla  e^{-t\sqrt L}):\dot{W}^{1,q}(w)\rightarrow L^q(w),\quad \max\{r_w,\widehat{q}_-(L)\}<q<\frac{q_+(L)}{s_w},
\end{align}
where  $\dot{W}^{1,q}(w)=\overline{\big\{f\in \mathcal{S}:\nabla f\in L^q(w)\big\}}$, with the closure taken with respect to the seminorm $f\to \|\nabla f\|_{L^q(w)}$.
With this in hand,  by interpolation (see \cite{Badr}), we can conclude \eqref{eq:weightedG} for all $
\max\left\{r_w,\frac{r_wn\widehat{q}_-(L)}{r_wn+\widehat{q}_-(L)}\right\}<p<\frac{q_+(L)}{s_w}
$ provided that for every $\widetilde{p}$ and  $r_0>r_w$ such that
$\widehat{q}_-(L)<\widetilde{p}<q_+(L)/s_w$, $r_wq_-(L)<r_0q_-(L)<q_+(L)/s_w$, and  $\widetilde{q}:=\max\left\{r_0,\frac{nr_0\widetilde{p}}{nr_0+\widetilde{p}}\right\}$, we show that
\begin{align}\label{interpol2}
\Ncal(\nabla  e^{-t\sqrt L}):\dot{W}^{1,\widetilde{q}}(w)\rightarrow L^{\widetilde{q},\infty}(w).
\end{align}

\medskip

In order to prove \eqref{interpol2} fix $f\in \mathcal{S}$, and fix $\widetilde{p}$ and $r_0$ satisfying the above restrictions. Furthermore, fix  
$q_2$ and $p_1$ so that $q_-(L)<q_2<2$, $r_0q_2<\frac{q_+(L)}{s_w}$, and $\max\{\widetilde{p},r_0q_2\}<p_1<q_+(L)/s_w$. Note that in particular
\begin{align}\label{p_1choice}
w\in A_{\frac{p_1}{q_2}}\cap RH_{\left(\frac{q_+(L)}{p_1}\right)'}, \quad \textrm{and}\quad p_1>\widetilde{q}.
\end{align}
Moreover, note by Claim \ref{claim:1} with $S_t=e^{-\frac{t^2}{4}L}$, and Claims \ref{claim:2} and \ref{claim:3}, we obtain that, for any $q_0$ such that $\max\{r_w,\widehat{q}_-(L)\}<q_0<\max\{r_0,\widetilde{p}\}<q_+(L)/s_w$ and any function $h\in L^2(\R^n)$,
\begin{multline}\label{bettersplit}
\Ncal(\nabla  e^{-t\sqrt L}h)(x)  
\lesssim \sum_{l\geq 1}e^{-c2^l}
\sup_{t>0}
\left(\dashint_{B(x,2^{l+2}t)} |\nabla e^{-t^2L} h(z)|^{q_0} dw\right)^{\frac{1}{q_0}}
\\\qquad+
 \sum_{l\geq 1}e^{-c2^l} \sup_{t>0}\br{\,\fint_{B(x,2^{l+2}t)} |\wt\Grm_{\hh,t}  h(z)|^{q_0} dw}^{\frac1{q_0}} 
+
\int_{\frac{1}{4}}^{\infty}u\,e^{-u}\mathrm{G}^{2\sqrt{u}}_{\hh}(\sqrt{L}h)(x)\frac{du}{u}
\\=:
\sum_{l\geq 1}e^{-c2^l}\mathfrak{O}_{1,l}h(x)+
\sum_{l\geq 1}e^{-c2^l}\mathfrak{O}_{2,l}h(x)+
\mathfrak{O}_{3}h(x).
\end{multline}

Next we observe that to obtain \eqref{interpol2} it suffices to prove for every $\alpha>0$ that
\begin{align}\label{proofconcluded}
w\left(\left\{x\in \R^n: \Ncal(\nabla  e^{-t\sqrt L}f)(x)>\alpha\right\}\right)\lesssim \frac{1}{\alpha^{\widetilde{q}}}\int_{\R^n}|\nabla f(x)|^{\widetilde{q}} dw.
\end{align}
In order to obtain this inequality, fix $\alpha>0$ and consider the  Calderón-Zygmund decomposition for the function $f$ at height $\alpha$ given by Lemma \ref{lemma:C-Z-decomposition} with $p_0=\widetilde{q}$. 
Let $\{B_i\}_{i}$ be the corresponding collection of balls, and define, for $M\in \N$ arbitrarily large, 
\[
\mathcal{B}_{r_{B_i}}:=(I-e^{-r_{B_i}^2L})^M,\qquad \mathcal{A}_{r_{B_i}}:=I-\mathcal{B}_{r_{B_i}}=\sum_{k=1}^MC_{k,M}e^{-kr_{B_i}^2L}.
\]
 Hence,
\begin{align}\label{decompositionf}
f=g+\sum_{i}\mathcal{A}_{r_{B_i}}b_i+\sum_{i}\mathcal{B}_{r_{B_i}}b_i=:g+\widetilde{b}+\sum_i\widehat{b}_i=:g+\widetilde{b}+\widehat{b}.
\end{align}
and then 
\begin{align}\label{maintimederivative}
&w\big(\big\{x\in \R^n:  \, \Ncal\big(\nabla  e^{-t\sqrt L}f\big)(x)>\alpha\big\}\big)
\leq
w\big(\big\{x\in \R^n:\Ncal\big(\nabla  e^{-t\sqrt L} g\big)(x)>\alpha/5\big\}\big)
\\\nonumber&
\qquad\qquad
+
w\big(\big\{x\in \R^n:\Ncal\big(\nabla  e^{-t\sqrt L} \,\widetilde{b}\big)(x)\alpha/5\big\}\big)
\\\nonumber&
\qquad\qquad
+\sum_{m=1}^{2}
w\big(\big\{x\in \R^n:C\sum_{l\geq 1}e^{-c2^l}\mathfrak{O}_{m,l}\widehat{b}(x)>\alpha/5\big\}\big)
\\\nonumber&
\qquad\qquad
+
w\big(\big\{x\in \R^n:C\mathfrak{O}_{3}\widehat{b}(x)>\alpha/5\big\}\big)
\\\nonumber
&
\qquad   =:\mathcal I+ \mathcal{II}+\sum_{m=1}^2\mathcal{III}_{m}+\mathcal{IV}.
\end{align}

Before starting with the estimate of the above terms, we make a couple of observations for later use. First, take 
 $1< \mathfrak{q}<\infty$ and $h\in L^{\mathfrak{q}'}(w)$ such that $\|h\|_{L^{\mathfrak{q}'}(w)}= 1$ and recall the definition of  $\mathcal{M}^w$  in \eqref{weightedmaximal}. Then,
using a Kolmogorov type inequality (see \cite[Exercise 2.1.5]{GrafakosI}), the fact that $\mathcal{M}^w$ is bounded from $L^1(w)$ to $L^{1,\infty}(w)$ since $w\in A_\infty$ and hence it gives rise to a doubling measure,  and \eqref{C-Z:sum},
we have that
\begin{multline}\label{maximal-u}
\Big(\sum_{i\in \N} \int_{B_i}\mathcal{M}^w(|h|^{\mathfrak{q}'})(x)^{\frac{1}{\mathfrak{q}'}}w(x)dx\Big)^{\mathfrak{q}}
\lesssim
\Big(\int_{\cup_{i\in \N}B_i}\mathcal{M}^w(|h|^{\mathfrak{q}'})(x)^{\frac{1}{\mathfrak{q}'}}w(x)dx\Big)^{\mathfrak{q}}
\\
\lesssim
w(\cup_{i\in \N}B_i)\|h\|_{L^{\mathfrak{q}'}(w)}^\mathfrak{q}\lesssim \frac{1}{\alpha^{\widetilde{q}}}\int_{\R^n}|\nabla f(x)|^{\widetilde{q}}w(x)dx.
\end{multline}
Second, note that for $p_2:=\max\{r_0,\widetilde{p}\}$,
 we have that $q_0<p_2$. Assuming momentarily that  $p_2<(\widetilde{q})_w^*$, by \eqref{C-Z:extrab}, we get that
\begin{align}\label{CZ-ptildeexta}
 \left(\dashint_{B_i}|b_i(x)|^{p_2}dw\right)^{\frac{1}{p_2}}\lesssim \alpha r_{B_i}.
\end{align}
In order to see that $p_2<(\widetilde{q})_w^*$, we first consider the case $p_2=r_0$. Then, $\widetilde{q}=\max\left\{r_0,\frac{nr_0\widetilde{p}}{nr_0+\widetilde{p}}\right\}=r_0$, and thus 
$p_2=r_0<(r_0)_w^*=(\widetilde{q})_w^*$. On the other had, if $p_2=\widetilde{p}$, 
we may assume that $nr_w>\widetilde{q}$ ---otherwise we trivially have $\widetilde{p}=p_2<(\widetilde{q})_w^*=\infty$. Besides,
\begin{multline*}
\frac{1}{(\widetilde{q})_w^*}=\frac{1}{\widetilde{q}}-\frac{1}{nr_{w}}
<\frac{nr_0+\widetilde{p}}{nr_0\widetilde{p}}-\frac{1}{nr_{w}}
=\frac{1}{\widetilde{p}}+\frac{1}{nr_0}-
\frac{1}{nr_{w}}
\\
=
\frac{1}{\widetilde{p}}+\frac{1}{n}\left(\frac{1}{r_0}-\frac{1}{r_w}\right)
\leq 
\frac{1}{\widetilde{p}}=\frac{1}{p_2}.
\end{multline*}

\medskip

Now, we are ready to estimate the terms in \eqref{maintimederivative}.
In order to estimate $\mathcal{I}$, first recall that $\widetilde{q}<p_1$ and $\max\{r_w,\widehat{q}_-(L)\}<p_1<q_+(L)/s_w$ (see \eqref{p_1choice}). Then, 
by Chebyshev's inequality, \eqref{interpol1} and Lemma \ref{lemma:C-Z-decomposition}, we get
\begin{align}\label{firstermsum}
\mathcal{I}
&\lesssim
 \frac{1}{\alpha^{p_1}}\int_{\R^n}\Ncal\big(\nabla  e^{-t\sqrt L} g\big)^{p_1}dw
 \lesssim 
 \frac{1}{\alpha^{p_1}}\int_{\R^n}|\nabla g|^{p_1}dw
% \lesssim
%  \frac{1}{\alpha^{\widetilde{q}}}\left(\int_{\R^n}|\nabla f(x)|^{\widetilde{q}}v(x)dw+\alpha^{\widetilde{q}}\sum_{i\in \N}w(B_i)\right)
\lesssim
   \frac{1}{\alpha^{\widetilde{q}}}\int_{\R^n}|\nabla f|^{\widetilde{q}}dw.
\end{align}
Next we estimate
$\mathcal{II}$. Consider $p_2$ defined as in \eqref{CZ-ptildeexta} and apply Chebyshev's inequality and \eqref{interpol1} (recall that $\max\{r_w,\widehat{q}_-(L)\}<p_2<p_1<q_+(L)/s_w$ ). Thus
\begin{align}\label{prevtildeb}
&\mathcal{II}\lesssim\frac{1}{\alpha^{p_2}}\int_{\R^n} \Ncal\big(\nabla  e^{-t\sqrt L}\,\widetilde{b} \big)^{p_2}dw
\lesssim\frac{1}{\alpha^{p_2}}\int_{\R^n} |\nabla \widetilde{b}|^{p_2}dw.
\\\nonumber
\end{align}
We claim that 
\begin{align}\label{tildeb}
\int_{\R^n} |\nabla \widetilde{b}|^{p_2}dw\lesssim \frac{\alpha^{p_2}}{\alpha^{\widetilde{q}}}\int_{\R^n}|\nabla f|^{\widetilde{q}}dw.
\end{align}
Indeed, 
by the definition of $\mathcal{A}_{r_{B_i}}$, 
\begin{multline*}
\int_{\R^n} |\nabla \widetilde{b}|^{p_2}dw
\lesssim
\int_{\R^n} \bigg|\nabla\bigg(\sum_{i\in \N}\sum_{k=1}^{M}{C_{k,M}}e^{-kr_{B_i}^2L}b_i\bigg)\bigg|^{p_2}dw
\\
\lesssim
\sup_{\|h\|_{L^{p_2'}(w)}=1}
\bigg(\sum_{k=1}^{M}\sum_{i\in \N}\int_{\R^n} \bigg|\sqrt{k}r_{B_i}\nabla  e^{-kr_{B_i}^2L}\bigg(\frac{b_i}{r_{B_i}}\bigg)\bigg|\,|h|dw\bigg)^{p_2}.
\end{multline*}
Besides,  using that $\{\sqrt{t}\nabla  e^{-tL}\}_{t>0}$ satisfies $L^{p_2}(w)$-$L^{p_2}(w)$ off-diagonal estimates on balls; and  by \eqref{doublingcondition} and \eqref{CZ-ptildeexta}, we have
\begin{align*}
&\int_{\R^n} \bigg|\sqrt{k}r_{B_i}\nabla e^{-kr_{B_i}^2L}\bigg(\frac{b_i}{r_{B_i}}\bigg)\bigg|\,|h|dw
\\
&\lesssim \sum_{j\geq 1}w(2^{j+1}B_i)\bigg(\dashint_{C_j(B_i)}\bigg|\sqrt{k}r_{B_i}\nabla e^{-kr_{B_i}^2L}\bigg(\frac{b_i}{r_{B_i}}\bigg)\bigg|^{p_2}dw\bigg)^{\frac{1}{p_2}}
\bigg(\dashint_{C_j(B_i)}|h|^{p_2'}dw\bigg)^{\frac{1}{p_2'}}
\\\nonumber
&\lesssim
\sum_{j\geq 1}e^{-c4^j}w(B_i)\bigg(\dashint_{B_i} \bigg|\frac{b_i(x)}{r_{B_i}}\bigg|^{p_2}dw\bigg)^{\frac{1}{p_2}}\,\inf_{x\in B_i}\mathcal{M}^w(|h|^{p_2'})(x)^{\frac{1}{p_2'}}
\\&\lesssim
\alpha\int_{B_i}\mathcal{M}^w(|h|^{p_2'})^{\frac{1}{p_2'}}dw.
\end{align*}
Hence, by \eqref{maximal-u} with $\mathfrak{q}=p_2$, we have
\begin{align*}
\int_{\R^n} |\nabla \widetilde{b}|^{p_2}dw
&\lesssim\alpha^{p_2}\sup_{\|h\|_{L^{p_2'}(w)}=1}\bigg(\int_{B_i}\mathcal{M}^w(|h|^{p_2'})^{\frac{1}{p_2'}}dw\bigg)^{p_2}
\lesssim \frac{\alpha^{p_2}}{\alpha^{\widetilde{q}}}\int_{\R^n}|\nabla f(x)|^{\widetilde{q}}dw.
\end{align*}
Consequently, \eqref{tildeb} holds. In view of \eqref{prevtildeb}, that implies
\begin{align}\label{termcalII}
\mathcal{II}\lesssim  \frac{1}{\alpha^{\widetilde{q}}}\int_{\R^n}|\nabla f(x)|^{\widetilde{q}}dw.
\end{align}

We continue by estimating $\mathcal{IV}$.
%Since $\mathcal{B}_{r_{B_i}}=I+\mathcal{A}_{r_{B_i}}$ 
To this end,  write  $b=\sum_i b_i$  so that $\widehat{b}=b-\widetilde{b}$, and note that 
\begin{multline*}
\mathcal{IV}
\leq  
w\big(\big\{x\in \R^n:C\mathfrak{O}_{3}b(x)>\alpha/10\big\}\big)
\\
+
 w\big(\big\{x\in \R^n:C\mathfrak{O}_{3}\widetilde{b}(x)>\alpha/10\big)\big\}
=: \mathcal{IV}_1+\mathcal{IV}_2.
\end{multline*}
In order to estimate $\mathcal{IV}_1$ apply Chebyshev's inequality, Minkowski's integral inequality, change of angles (\cite[Proposition 3.2]{MaPAI17}),  \eqref{usedfinal1}, and \eqref{usedfinal2}, to get
 \begin{multline*}
\mathcal{IV}_1
\lesssim
\frac{1}{\alpha^{\widetilde{q}}}\bigg(\int_{\frac{1}{4}}^{\infty}
e^{-cu}\big\|\Grm_{\hh}^{2\sqrt{u}}\big(\sqrt{L}\,b\big)\big\|_{L^{\widetilde{q}}(w)}
\frac{du}{u}\bigg)^{\widetilde{q}}
\\
\lesssim
\frac{1}{\alpha^{\widetilde{q}}}\big\|\Grm_{\hh}\big(\sqrt{L}\, {b}\big)\big\|_{L^{\widetilde{q}}(w)}^{\widetilde{q}}\lesssim
\frac{1}{\alpha^{\widetilde{q}}} \left\|\nabla b\right\|_{L^{\widetilde{q}}(w)}^{\widetilde{q}}.
\end{multline*}
Besides, note that by \eqref{C-Z:b} and \eqref{C-Z:sum}
\begin{align*}
\left\|\nabla b\right\|_{L^{\widetilde{q}}(w)}^{\widetilde{q}}\lesssim
\sum_{i}\int_{B_i}|\nabla b_i|^{\widetilde{q}} dw
\lesssim \alpha^{\widetilde{q}}
\sum_{i}w(B_i)
\lesssim \int_{\R^n}|\nabla f|^{\widetilde{q}} dw.
\end{align*}
Hence,
\begin{align*}
\mathcal{IV}_1\lesssim
\frac{1}{\alpha^{\widetilde{q}}}\int_{\R^n}|\nabla f|^{\widetilde{q}} dw.
\end{align*}
As for the estimate of $\mathcal{IV}_2$,  apply again Chebyshev's inequality, Minkowski's integral inequality, change of angles, \eqref{usedfinal1}, and \eqref{usedfinal2}, to get 
\begin{multline*}
\mathcal{IV}_2
\lesssim
\frac{1}{\alpha^{p_2}}\bigg(\int_{\frac{1}{4}}^{\infty}
e^{-cu}\big\|\Grm_{\hh}^{2\sqrt{u}}\big(\sqrt{L}\, \widetilde{b}\,\big)\big\|_{L^{p_2}(w)}
\frac{du}{u}\bigg)^{p_2}
\\
\lesssim
\frac{1}{\alpha^{p_1}}
\big\|\Grm_{\hh}\big(\sqrt{L}\, \widetilde{b}\,\big)\big\|_{L^{p_2}(w)}^{p_2}
\lesssim
\frac{1}{\alpha^{p_2}}
\|\nabla \widetilde{b}\,\|_{L^{p_2}(w)}^{p_2}
\end{multline*}
Thus, by \eqref{tildeb},
$$
\mathcal{IV}_2\lesssim 
\frac{1}{\alpha^{\widetilde{q}}}\int_{\R^n}|\nabla f|^{\widetilde{q}} dw.
$$
Collecting the estimates for $\mathcal{IV}_1$ and $\mathcal{IV}_2$ we conclude that
\begin{align}\label{termIIItimederivative}
\mathcal{IV}
\lesssim
\frac{1}{\alpha^{\widetilde{q}}}\int_{\R^n}|\nabla f|^{\widetilde{q}}dw.
\end{align}

It remains to estimate
$
\mathcal{III}_m$, for $m=1,2$. Note that by  \eqref{doublingcondition} and \eqref{C-Z:sum},
\begin{align}\label{termII_2}
\mathcal{III}_{m}
\leq &
w\Big(\bigcup_{i} 16B_i\Big)+
w\bigg(\bigg\{x\in \R^n\setminus \cup_{i} 16B_i:C\sum_{l\geq 1}e^{-c2^l}\mathfrak{O}_{m,l}\widehat{b}(x)>\alpha/5\bigg\}\bigg)
\\\nonumber
 \lesssim &
\frac{1}{\alpha^{\widetilde{q}}}\int_{\R^n}|\nabla f|^{\widetilde{q}}dw+\sum_{l\geq 1}
w\bigg(\bigg\{x\in \R^n\setminus \cup_{i} 16B_i:\mathfrak{O}_{m,l}\widehat{b}(x)>\frac{e^{c2^l}\alpha}{C2^{l}}
\bigg\}\bigg)
\\\nonumber
=: &
\frac{1}{\alpha^{\widetilde{q}}}\int_{\R^n}|\nabla f|^{\widetilde{q}}dw+\sum_{l\geq 1}\mathcal{III}_{m,l}.
\end{align}
Applying Chebyshev's inequality,  duality, and Hölder's inequality, it follows that
\begin{align}\label{sum:termII_2}
&\mathcal{III}_{m,l}\lesssim
\frac{e^{-c2^l}}{\alpha^{p_2}}
\int_{\R^n\setminus \cup_{i} 16B_i}
|\mathfrak{O}_{m,l}\widehat{b}|^{p_2}dw
\\\nonumber&\,\,
\lesssim
\frac{e^{-c2^l}}{\alpha^{p_2}}
\bigg(\sup_{\|h\|_{L^{p_2'}(w)}=1}\sum_{i}\sum_{j\geq 4}
\bigg(\int_{C_j(B_i)}
|\mathfrak{O}_{m,l}\widehat{b}_i|^{p_2}dw\bigg)^{\frac{1}{p_2}}\|h\chi_{C_j(B_i)}\|_{L^{p_2'}(w)}\bigg)^{p_2}
\\\nonumber &\,\,
=:
\frac{e^{-c2^l}}{\alpha^{p_2}}\bigg(\sup_{\|h\|_{L^{p_2'}(w)}=1}\sum_{i}\sum_{j\geq 4}I_{m,l}^{ij}\,\|h\chi_{C_j(B_i)}\|_{L^{p_2'}(w)}\bigg)^{p_2}.
\end{align}

\begin{claim}\label{claim}
There exist $\Theta, M_0>1$ such that for every $2\,M>\max\{\Theta, M_0\}$ 
\begin{align*}
I_{m,l}^{ij}\leq C 2^{lC_M}\alpha w(2^{j+1}B_i)^{\frac{1}{p_2}}2^{-j(2M-\Theta)}, \quad m=1,2.
\end{align*}
\end{claim}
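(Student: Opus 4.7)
The key tool will be the integral representation
\[
(I-e^{-r_{B_i}^2L})^Mb_i=\int_0^{r_{B_i}^2}\!\!\!\!\cdots\!\!\!\!\int_0^{r_{B_i}^2} L^M e^{-(u_1+\cdots+u_M)L}b_i\,du_1\cdots du_M,
\]
which extracts an explicit prefactor $r_{B_i}^{2M}$ (from the volume of integration) together with the operator $\nabla L^M e^{-\tau L}$ at time $\tau\in[t^2,t^2+Mr_{B_i}^2]$. The families $\{\sqrt\tau\nabla(\tau L)^M e^{-\tau L}\}_{\tau>0}$ and $\{\sqrt\tau\nabla(\tau L)^{M+1} e^{-\tau L}\}_{\tau>0}$ both satisfy $L^{q_0}(w)$--$L^{q_0}(w)$ off-diagonal estimates on balls by Proposition \ref{prop:weightedOD}, since $\max\{r_w,\wh q_-(L)\}<q_0<\wh q_+(L)$. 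I will also use repeatedly the improved Calderón--Zygmund estimate $(\fint_{B_i}|b_i|^{q_0}dw)^{1/q_0}\lesssim \alpha r_{B_i}$, which follows from Hölder's inequality and \eqref{CZ-ptildeexta}, since $q_0<p_2<(\widetilde q)_w^*$.

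For $m=1$, I will fix $x\in C_j(B_i)$ and $t>0$ and set $B:=B(x,2^{l+2}t)$. Expanding $\widehat b_i$ via the integral representation above and applying Minkowski reduces the task to bounding the prefactor $(r_{B_i}^2)^M \tau^{-M-1/2}$ times $(\fint_B |\sqrt\tau\nabla(\tau L)^M e^{-\tau L}b_i|^{q_0}\,dw)^{1/q_0}$ uniformly in $\vec u$. I will split into two regimes: (A) $2^{l+2}t\le 2^{j-2}r_{B_i}$, and (B) $2^{l+2}t>2^{j-2}r_{B_i}$. In (A), the ball $B$ is at distance $\sim 2^j r_{B_i}$ from $B_i$; after dominating $B$ by the concentric enlargement $\widetilde B:=B(x_{B_i},2^{j+2}r_{B_i})$ and applying the off-diagonal estimate on $\widetilde B$ with $b_i$ supported in $C_{k^*}(\widetilde B)$ for $k^*\sim\log_2(r_{\widetilde B}/r_{B_i})$, we get a factor of the form $\Upsilon(2^j r_{B_i}/\sqrt\tau)^{\theta_2} e^{-c4^j r_{B_i}^2/\tau}$; combining this with the prefactor and using the elementary maximization $\rho^M e^{-c4^j\rho}\le (M/c)^M 4^{-jM}$ with $\rho=r_{B_i}^2/\tau$ produces the decay $2^{-2jM}$ together with admissible polynomial $l$-factors from the doubling ratio $w(\widetilde B)/w(B)$. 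In (B) we have $\tau\sim t^2\gtrsim 4^{j-l}r_{B_i}^2$, so $(r_{B_i}^2/\tau)^M\lesssim 4^{(l-j)M}$ directly; combined with the $L^{q_0}(w)$-boundedness of $\sqrt\tau\nabla(\tau L)^M e^{-\tau L}$ applied on a ball containing both $B$ and $B_i$ and with doubling of $w$, this produces the analogous bound. Finally, integrating the resulting pointwise estimate over $C_j(B_i)$ and using $w(C_j(B_i))\le w(2^{j+1}B_i)$ delivers the claim with $\Theta$ collecting $\theta_1$, $\theta_2$, and the doubling exponent $nr_w$.

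For $m=2$, recall that $\wt\Grm_{\hh,t}\widehat b_i(z)=\big(\int_{t/2}^\infty |s^2\nabla L e^{-s^2L}\widehat b_i(z)|^2\,\tfrac{ds}{s}\big)^{1/2}$. Using the integral representation of $(I-e^{-r_{B_i}^2L})^M$ and Minkowski's inequality in $L^2(ds/s)$, one reduces to bounding, for each $\vec u\in[0,r_{B_i}^2]^M$, the inner quantity $\big(\int_{t/2}^\infty|s^2\nabla L^{M+1}e^{-(s^2+|u|)L}b_i(z)|^2\,\tfrac{ds}{s}\big)^{1/2}$. Setting $\tau=s^2+|u|$ and repeating the two-regime analysis from the $m=1$ case with the family $\{\sqrt\tau\nabla(\tau L)^{M+1}e^{-\tau L}\}_{\tau>0}$ in place of the previous one, one obtains the stronger decay $2^{-j(2M+2-\Theta)}\le 2^{-j(2M-\Theta)}$. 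The integration in $\vec u$ and the doubling arguments proceed as before, with all adjustments absorbed into $C_M$ and $\Theta$.

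The principal obstacle will be the careful bookkeeping of three competing scales---$r_{B_i}$, $t$, and $2^{l+2}t$---so that the doubling constants and off-diagonal exponents combine into a single $\Theta$ depending only on $n$, $w$, $L$, and $q_0$ (independent of $j$, $l$, $M$), while the $l$-dependent factors, which come from enlarging $B(x,2^{l+2}t)$ to a ball centered at $x_{B_i}$ and invoking doubling, remain at most of polynomial form $2^{lC_M}$. Once this is achieved, the exponential factor $e^{-c2^l}$ already extracted in \eqref{sum:termII_2} will absorb the polynomial $l$-growth, and choosing $M$ so large that $2M>\Theta$ (and $2M-\Theta$ exceeds the doubling exponent governing $w(2^{j+1}B_i)$) will make the $j$-series in \eqref{sum:termII_2} convergent.
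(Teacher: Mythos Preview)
Your regime (B) argument is essentially correct and parallels the paper's treatment of $\mathfrak{C}_2$ and $D_2^{ij}$, using that $\tau\ge t^2\gtrsim 4^{j-l}r_{B_i}^2$ gives the factor $(r_{B_i}^2/\tau)^M$ directly.

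However, regime (A) has a genuine gap. The weighted off-diagonal estimates on balls (Proposition~\ref{prop:weightedOD}) control averages over a ball $B_i$ and its annuli $C_k(B_i)$, not over arbitrarily small sub-balls. In regime (A) the averaging ball $B=B(x,2^{l+2}t)$ satisfies $B\subset 2^{j+2}B_i\setminus 2^{j-1}B_i$, but its radius $2^{l+2}t$ may be arbitrarily small relative to $2^j r_{B_i}$. Converting $(\fint_B|\cdot|^{q_0}dw)^{1/q_0}$ to $(\fint_{C_j(B_i)}|\cdot|^{q_0}dw)^{1/q_0}$ introduces a factor $(w(2^{j+1}B_i)/w(B))^{1/q_0}$ that is \emph{not} bounded polynomially in $2^l$: by doubling it scales like $(2^{j}r_{B_i}/(2^{l}t))^{nr_w/q_0}$, which blows up as $t\to 0$. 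Your sentence ``applying the off-diagonal estimate on $\widetilde B$ with $b_i$ supported in $C_{k^*}(\widetilde B)$'' is also mis-stated: $b_i$ is supported in $B_i\subset\widetilde B$, not in any annulus $C_{k^*}(\widetilde B)$. More fundamentally, your integral representation keeps the $t$-dependence inside the semigroup time $\tau=t^2+|u|$, so you cannot simply take the supremum over $t$ after applying off-diagonal bounds at a fixed $\tau$.

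The paper resolves this by using the holomorphic functional calculus: writing $\nabla e^{-t^2L}(I-e^{-r_{B_i}^2L})^M=\int_\Gamma \nabla e^{-zL}\,\eta(z,t)\,dz$ with $|\eta(z,t)|\lesssim r_{B_i}^{2M}|z|^{-M-1}$, the symbol bound is \emph{independent of $t$}. One then dominates $(\fint_B|\cdot|^{q_0}dw)^{1/q_0}$ by $\int_\Gamma\mathcal{M}_{q_0}^w(\chi_{2^{j+2}B_i\setminus 2^{j-1}B_i}\sqrt z\,\nabla e^{-zL}b_i)(x)\,r_{B_i}^{2M}|z|^{-M-3/2}|dz|$, a quantity free of $t$, so the supremum is trivial. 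Only after integrating in $L^{p_2}(w)$ over $C_j(B_i)$ and using the $L^{p_2}(w)$-boundedness of $\mathcal{M}_{q_0}^w$ does one apply the off-diagonal estimate (now legitimately between $B_i$ and the full annulus). Your approach can be repaired---for instance by bounding the $\vec u$-integral by $r_{B_i}^{2(M-1)}\int_0^\infty\tau^{-M-1/2}\mathcal{M}_{q_0}^w(\chi_{\ldots}|\sqrt\tau\nabla(\tau L)^M e^{-\tau L}b_i|)(x)\,d\tau$, which is $t$-independent---but this requires the maximal-function step that you omitted, and without it the argument in (A) does not close.
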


Assuming this momentarily, in view of  \eqref{sum:termII_2}, for $2M>\max\left\{\Theta +nr_{w},M_0 \right\}$,  and by \eqref{maximal-u} with $\mathfrak{q}=p_2$ we get, for $m=1,2$,
\begin{align*}
\mathcal{III}_{m,l}
\lesssim e^{-c2^l} 
\bigg(\sum_{i} \int_{B_i}\mathcal{M}^w(|h|^{p_2'})^{\frac{1}{p_2'}}dw\bigg)^{p_2}
\lesssim e^{-c2^l}
\frac{1}{\alpha^{\widetilde{q}}}\int_{\R^n}|\nabla f|^{\widetilde{q}}dw.
\end{align*}
Therefore, by  \eqref{termII_2}, for $m=1,2$,
\begin{align*}
\mathcal{III}_{m}
\lesssim \sum_{l\geq 1} e^{-c2^l} 
\frac{1}{\alpha^{\widetilde{q}}}\int_{\R^n}|\nabla f|^{\widetilde{q}}dw
\lesssim
\frac{1}{\alpha^{\widetilde{q}}}\int_{\R^n}|\nabla f|^{\widetilde{q}}dw.
\end{align*}
By the above inequality, \eqref{maintimederivative}, \eqref{firstermsum},\eqref{termcalII}, and \eqref{termIIItimederivative}, we get \eqref{proofconcluded}. This leads to \eqref{interpol2} what in turn, as we have already observed, finishes the proof modulo Claim \ref{claim}. \qed

\begin{proof}[Proof of Claim \ref{claim}, $m=1$]	
Note that
\begin{align*}
{I}_{1,l}^{ij}&
\lesssim
\bigg(\int_{C_j(B_i)}\bigg(\sup_{0<t<2^{j-l-3}r_{B_i}}\bigg(\dashint_{B(x,2^{l+2}t)}
\big|\nabla e^{-t^2L}\widehat{b}_i\big|^{q_0}dw\bigg)^{\frac{1}{q_0}}\bigg)^{p_2}dw\bigg)^{\frac{1}{p_2}}
\\
&\,\,+
\bigg(\int_{C_j(B_i)}\bigg(\sup_{t\geq 2^{j-l-3}r_{B_i}}\bigg(\dashint_{B(x,2^{l+2}t)}
\big|\nabla e^{-t^2L}\widehat{b}_i\,\big|^{q_0}dw\bigg)^{\frac{1}{q_0}}\bigg)^{p_2}dw\bigg)^{\frac{1}{p_2}}
\\&=:\mathfrak{C}_1+\mathfrak{C}_2.
\end{align*}

In order to estimate $\mathfrak{C}_1$, we use functional calculus. Take $\phi(z,t):=e^{-t^2 z}(1-e^{-r_{B_i}^2 z})^M$, then $\phi(z,t)$ is holomorphic in the open sector $\Sigma_\mu=\{z\in\mathbb{C}\setminus\{0\}:|{\rm arg} (z)|<\mu\}$ and satisfies $|\phi(z,t)|\lesssim |z|^M\,(1+|z|)^{-2M}$ (with implicit constant depending on $\mu$, $t>0$, $r_{B_i}$, and $M$) for every $z\in\Sigma_\mu$. We can check that  for every $z\in \Gamma=\partial\sum_{\frac{\pi}{2}-\theta}$,
$$
|\eta(z,t)| \lesssim \frac{r_{B_i}^{2M}}{(|z|+t^2)^{M+1}}.
$$
%Indeed, note that $\pi/2<\nu+\pi/2-\theta<\pi$. Consequently,  $\cos(\nu+\pi/2-\theta)<0$. Then,
%by \eqref{meanvalue}, for every $z\in \Gamma$, $\zeta\in \gamma$, and for  $c_0:=\min\{\cos(\nu),|\cos(\nu+\pi/2-\theta)|\}>0$, we %have
%%%%%%%%%%%%%%%%%%%%%%%%%%%%
%\begin{align*}
%|e^{\zeta z} \phi(\zeta,t )|\lesssim  e^{|\zeta|\,|z|\cos(\nu+\pi/2-\theta)}e^{-t^2|\zeta|\cos(\nu)}|\zeta|^Mr_{B_i}^{2M}
%\leq |\zeta|^Mr_{B_i}^{2M}e^{-c_0|\zeta|(t^2+|z|)},
%\end{align*}
%with the implicit constant depending on $M$.
%Therefore, for every $z\in \Gamma$
% \begin{multline*}
% |\eta(z,t)|\lesssim r_{B_i}^{2M}\int_0^{\infty}l^Me^{-c_0(t^2+|z|)l}\,dl
 %\leq 
 %r_{B_i}^{2M}\int_0^{\frac{1}{|z|+t^2}}l^Me^{-c_0(t^2+|z|)l}\,dl+r_{B_i}^{2M}\int_{\frac{1}{|z|+t^2}}^{\infty}l^Me^{-c_0(t^2+|z|)l}\,dl
 %\\
 %\lesssim
 %\frac{r_{B_i}^{2M}}{|z|+t^2}\int_0^{\frac{1}{|z|+t^2}}l^{M-1}\,dl
 %+ \frac{r_{B_i}^{2M}}{(|z|+t^2)^{M+2}}\int_{\frac{1}{|z|+t^2}}^{\infty}\frac{dl}{l^2}\approx\frac{r_{B_i}^{2M}}{(|z|+t^2)^{M+1}},
 %\end{multline*}
 %with the implicit constant depending on $M$, $\nu$, and $\theta$.
%%%%%%%%%%%%%%%%%%%%%%%%%%%%%%
Now fix $x\in C_{j}(B_i)$, $j\geq 4$, and $0<t<2^{j-l-3}r_{B_i}$, 
then $B(x,2^{l+2}t)\subset 2^{j+2}B_i\setminus 2^{j-1}B_i$. This and Minkowski's integral inequality imply
\begin{align*}
&\bigg(\dashint_{B(x,2^{l+2}t)}
\big|\nabla e^{-t^2L}\widehat{b}_i\big|^{q_0}dw\bigg)^{\frac{1}{q_0}}
=
\bigg(\dashint_{B(x,2^{l+2}t)}
\big|\nabla \phi(L,t)b_i\big|^{q_0}dw\bigg)^{\frac{1}{q_0}}
\\& \quad
\lesssim
\int_{\Gamma}\bigg(\dashint_{B(x,2^{l+2}t)}
\big|z^{\frac{1}{2}}\nabla e^{-zL}b_i\big|^{q_0}dw\bigg)^{\frac{1}{q_0}}\frac{r_{B_i}^{2M}}{|z|^{\frac{1}{2}}(|z|+t^2)^{M+1}}|dz|
\\& \quad
\lesssim
\int_{\Gamma}\bigg(\dashint_{B(x,2^{l+2}t)}
\big|\chi_{2^{j+2}B_i\setminus 2^{j-1}B_i}z^{\frac{1}{2}}\nabla e^{-zL}b_i\bigg|^{q_0}dw\big)^{\frac{1}{q_0}}\frac{r_{B_i}^{2M}}{|z|^{M+\frac{3}{2}}}|dz|
\\& \quad
\lesssim
\int_{\Gamma}\mathcal{M}_{q_0}^{w}\big(\chi_{2^{j+2}B_i\setminus 2^{j-1}B_i}z^{\frac{1}{2}}\nabla e^{-zL}b_i\big)(x)\frac{r_{B_i}^{2M}}{|z|^{M+\frac{3}{2}}}|dz|.
\end{align*}
%Observing that  $2^{j+2}B_i\setminus 2^{j-1}B_i=\cup_{l=1}^3C_{l+j-2}(B_i)$, 
Applying again  Minkowski's integral inequality, and recalling that $\mathcal{M}_{q_0}^w$ is bounded on $L^{p_2}(w)$ since  $q_0<p_2$,  we get
\begin{align*}
\mathfrak{C}_1
&\lesssim
\int_{\Gamma}
\bigg(
\int_{C_j(B_i)}
\mathcal{M}_{q_0}^w\big(
\chi_{2^{j+2}B_i\setminus 2^{j-1}B_i}z^{\frac{1}{2}}\nabla e^{-zL}b_i\big)^{p_2}
dw\bigg)^{\frac{1}{p_2}}
\frac{r_{B_i}^{2M}}{|z|^{M+\frac{3}{2}}}|dz|
\\
&\lesssim
\int_{\Gamma}\bigg(\int_{2^{j+2}B_i\setminus 2^{j-1}B_i}\big|z^{\frac{1}{2}}\nabla e^{-zL}b_i\big|^{p_2}dw\bigg)^{\frac{1}{p_2}}
\frac{r_{B_i}^{2M}}{|z|^{M+\frac{3}{2}}}|dz|.
\end{align*}
Observe that  $2^{j+2}B_i\setminus 2^{j-1}B_i=\cup_{l=1}^3C_{l+j-2}(B_i)$.
Then by the fact that $z^{\frac{1}{2}}\nabla e^{-zL}\in\mathcal{O}(L^{p_2}(w)-L^{p_2}(w))$,  \eqref{CZ-ptildeexta}, and changing the variable $s$ into $\frac{4^jr_{B_i}^2}{s^2}$, 
\begin{align*}
\mathfrak{C}_1
&\lesssim
w(2^{j+1}B_i)^{\frac{1}{p_2}}
2^{j\theta_1}
\bigg(\dashint_{B_i}|b_i|^{p_2}dw\bigg)^{\frac{1}{p_2}}
\int_{0}^{\infty}\Upsilon\bigg(\frac{2^jr_{B_i}}{s^{\frac{1}{2}}}\bigg)^{\theta_2}
e^{-c\frac{4^jr_{B_i}^2}{s}}
\frac{sr_{B_i}^{2M}}{s^{M+\frac{3}{2}}}\frac{ds}{s}
\\
&\lesssim
\alpha
w(2^{j+1}B_i)^{\frac{1}{p_2}}
2^{-j(2M+1-\theta_1)}
\int_{0}^{\infty}\Upsilon(s)^{\theta_2}
e^{-cs^2}
s^{2M+1}
\frac{ds}{s}
\\ &\lesssim
\alpha
w(2^{j+1}B_i)^{\frac{1}{p_2}}
2^{-j(2M+1-\theta_1)},
\end{align*}
 provided $2M+1>\theta_2$.

We continue by estimating $\mathfrak{C}_2$. To this end, first change the variable $t$ into $t\sqrt{M+1}=:t\theta_M$. Next, for any $x\in C_j(B_i)$, $j\geq 4$, and $t\geq\frac{2^{j-3}r_{B_i}}{2^{l}\theta_M}$,  note that $B_i\subset B(x_{B_i},\theta_M 2^{l}t)=:B_{i}^{l}\subset B(x,\theta_M2^{l+5}t)$ ($x_{B_i}$ denotes the center of $B_i$). Then,
\begin{multline*}
\mathfrak{C}_2
\lesssim
\bigg(\int_{C_j(B_i)}\bigg(\sup_{t\geq\frac{2^{j-3}r_{B_i}}{2^{l}\theta_M}}\dashint_{B(x,\theta_M 2^{l+2}t)}
\big|\nabla e^{-t^2L}\mathcal{T}_{t,r_{B_i}}(\chi_{B_{i}^{l}}b_i)\big|^{q_0}dw\bigg)^{\frac{p_2}{q_0}}dw\bigg)^{\frac{1}{p_2}}
\\
\lesssim
\bigg(\int_{C_j(B_i)}\bigg(\sup_{t\geq\frac{2^{j-3}r_{B_i}}{2^{l}\theta_M}}w(B(x,\theta_M 2^{l+2}t))^{-1}\int_{\R^n}
\big|\nabla e^{-t^2L}\mathcal{T}_{t,r_{B_i}}(\chi_{B_{i}^{l}}b_i)\big|^{q_0}dw\bigg)^{\frac{p_2}{q_0}}dw\bigg)^{\frac{1}{p_2}},
\end{multline*}
where  $\mathcal{T}_{t,r_{B_i}}:=\bigg(e^{-t^2L}-e^{-(t^2+r_{B_i}^2)L}\bigg)^M$.

%\begin{align*}
%\br{\!\!\int_{C_j(B_i)}\!\!\br{\!\sup_{t\geq\frac{2^{j-l-3}r_{B_i}}{\theta_M}}\!\!\!w(B(x,\theta_M 2^{l+2}t))^{-1}\!\!\int_{\R^n}
%\abs{\mathcal{T}_{t,r_{B_i}}\sqrt{L}e^{-t^2L}\br{\chi_{B_{i}^{l}}b_i}(y)}^{q_0}dw}^{\!\!\!\frac{p_2}{q_0}}\!\!\!dw\!\!}^{\!\!\!\frac{1}{p_2}}\!\!\!,
%\end{align*}
In the above setting, first recall that $\max\{r_w,\widehat{q}_-(L)\}<q_0<p_2<q_{+}(L)/s_w$, consequently $t\nabla e^{-t^2L}$
is bounded on $L^{q_0}(w)$. Besides, applying the $L^{q_0}(w)$-$L^{q_0}(w)$ off-diagonal estimates that $\mathcal{T}_{t,r_{B_i}}$ satisfies (see \eqref{AB}), and \eqref{doublingcondition} to obtain
\begin{align}\label{termT}
&\bigg(\int_{\R^n}
\big|\nabla e^{-t^2L}\mathcal{T}_{t,r_{B_i}}(\chi_{B_{i}^{l}}b_i)\big|^{q_0}dw\bigg)^{\frac{1}{q_0}}
\\\nonumber
&\qquad
\lesssim
2^{l}r_{B_i}^{-1}\bigg(\int_{\R^n}
\big|\mathcal{T}_{t,r_{B_i}}(\chi_{B_{i}^{l}}b_i)\big|^{q_0}dw\bigg)^{\frac{1}{q_0}}
\\\nonumber
&\qquad
\lesssim 2^{l}
\sum_{N\geq 1}w(C_N(B_i^{l}))^{\frac{1}{q_0}}
\bigg(\dashint_{C_N(B_{i}^{l})}
\bigg|\mathcal{T}_{t,r_{B_i}}\bigg(\chi_{B_{i}^{l}}\frac{b_i}{r_{B_i}}\bigg)\bigg|^{q_0}dw\bigg)^{\frac{1}{q_0}}
\\\nonumber
&\qquad
\lesssim 2^{l\theta}w(B_i^{l})^{\frac{1}{q_0}}
\sum_{N\geq 1}e^{-c4^N}
\bigg(\frac{r_{B_i}^2}{t^2}\bigg)^{M}\bigg(\dashint_{B_{i}^{l}}
\bigg|\frac{b_i}{r_{B_i}}\bigg|^{q_0}dw\bigg)^{\frac{1}{q_0}}
\\\nonumber
&\qquad
\lesssim 2^{l\theta}w(B_i)^{\frac{1}{q_0}}
\alpha\bigg(\frac{r_{B_i}^2}{t^2}\bigg)^{M}
,
\end{align}
where in the last inequality we have used \eqref{CZ-ptildeexta} since $q_0<p_2$.
Consequently, 
\begin{align*}
\mathfrak{C}_2
\lesssim  2^{l\theta}\alpha
\bigg(\int_{C_j(B_i)}\bigg(\sup_{t\geq \frac{2^{j-l-3}r_{B_i}}{\theta_M}}\bigg(\frac{r_{B_i}^2}{t^2}\bigg)^{M}\bigg)^{p_2}dw\bigg)^{\frac{1}{p_2}}
\lesssim
\alpha
w(2^{j+1}B_i)^{\frac{1}{p_2}}2^{-j2M}2^{l(2M+\theta)}
,
\end{align*}
where in the first inequality, we have used that $w(B(x,\theta_M2^{l+2}t))^{-1}w(B_i)\leq C$,  since $B_{i}\subset B(x,\theta_M2^{l+5}t)$ and by \eqref{doublingcondition}.

Collecting the estimates obtained for $\mathfrak{C}_1$ and  $\mathfrak{C}_2$, we conclude that, for $M\in \N$ such that $2M+1>\theta_2$,
\begin{align}\label{term:I_1^ij}
I_{1,l}^{ij}\lesssim \alpha\, w(2^{j+1}B_i)^{\frac{1}{p_2}}  2^{-j(2M-\theta_1)} 2^{l(2M+\theta)}.
\end{align}
This completes the proof of Claim \ref{claim} for $m=1$. 
\end{proof}

\begin{proof}[Proof of Claim \ref{claim}, $m=2$]
Splitting the supremum in $t$, we have
\begin{align*}
I_{2,l}^{ij}&
\lesssim 
\bigg(
\int_{C_j(B_i)}
\sup_{0<t<2^{j-l-3}r_{B_i}}\bigg(\dashint_{B(x,2^{l+2}t)}
\widetilde{\mathrm{G}}_{\hh}\big(\mathcal{B}_{r_{B_i}}b_i\big)^{q_0}dw\bigg)^{\frac{p_2}{q_0}}dw
\bigg)^{\frac{1}{p_2}}
\\
&\qquad+
\bigg(
\int_{C_j(B_i)}
\sup_{t\geq 2^{j-l-3}r_{B_i}}\bigg(\dashint_{B(x,2^{l+2}t)}
\widetilde{\mathrm{G}}_{\hh,t}\big(\mathcal{B}_{r_{B_i}}b_i\big)^{q_0}dw\bigg)^{\frac{p_2}{q_0}}dw
\bigg)^{\frac{1}{p_2}}
\\&=:\,D_1^{ij}+D_2^{ij}.
\end{align*}

\noindent\textbf{Estimate for $D_{1}^{ij}$:}  
We claim that, for some  fixed constants $\widetilde{\theta}_1>0$, $\widetilde{\theta}_2>0$, and for $M$ large enough the following holds
\begin{align}\label{D1ijfinal}
D_{1}^{ij}
\lesssim
 \alpha
w(2^{j+1}B_i)^{\frac{1}{p_2}}
2^{-j\left(2M-\widetilde{\theta_1}-2\widetilde{\theta_2}\right)}.
\end{align}
To  show this, first note that as before, for $0<t<2^{j-l-3}r_{B_i}$ and $x\in C_j(B_i)$,  we have that $B(x,2^{l+2}t)\subset 2^{j+2}B_i\setminus 2^{j-1}B_i$. Next recall that  $\mathcal{M}_{q_0}^w$ is bounded on $L^{p_2}(w)$  since $q_0<p_2$. Hence,
\begin{align*}
D_1^{ij}
&\lesssim
\bigg(\int_{C_j(B_i)} \mathcal{M}_{q_0}^w \big(\chi_{2^{j+2}B_i\setminus 2^{j-1}B_i}
\widetilde{\mathrm{G}}_{\hh}\widehat{b}_i\big)^{p_2}dw \bigg)^{\frac{1}{p_2}}
\\ &\lesssim 
w(2^{j+1}B_i)^{\frac{1}{p_2}} \bigg(
\dashint_{2^{j+2}B_i\setminus 2^{j-1}B_i}
\big|
\widetilde{\mathrm{G}}_{\hh}\widehat{b}_i
\big|^{p_2}dw
\bigg)^{\frac{1}{p_2}}.
\end{align*}
%Besides, with \eqref{p_2clasesARH} in mind,  we take $\widetilde{q}$  close enough to $q_+$ so that 
%\begin{align}\label{\widetilde{q}choice}
%\max\{2,p_2\}<\widetilde{q}<q_+, \quad \text{ and } \quad v\in RH_{\bigg(\frac{\widetilde{q}}{p_2}\bigg)'}(w).
%\end{align}
%
Now, note that since $p_2<q_+(L)/s_w$, we can take $\max\{2,p_2\}<q<q_+(L)$ so that $w\in RH_{\big(\frac{q}{p_2}\big)'}$.
Hence by Lemma \ref{lemma:sin-con} and Minkowski's integral inequality, we have
\begin{align}\label{D1ij}
D_1^{ij}
\lesssim
w(2^{j+1}B_i)^{\frac{1}{p_2}}
\bigg(\int_0^{\infty}\bigg(
\dashint_{2^{j+2}B_i\setminus 2^{j-1}B_i}
\big|
r\nabla rLe^{-r^2L}\widehat{b}_i
\big|^{q}dx\bigg)^{\frac{2}{q}}\frac{dr}{r}
\bigg)^{\frac{1}{2}}.
\end{align}
In order to estimate the integral 
in $x$, we use functional calculus  with the same choice of $\phi$ as in the estimate of $\mathfrak{C}_1$. Then
\begin{multline}\label{D1-1}
\bigg(\dashint_{2^{j+2}B_i\setminus 2^{j-1}B_i}\big|r\nabla rLe^{-r^2L}\widehat{b}_i\big|^{q}dx\bigg)^{\frac{1}{q}}
\\
\lesssim
\int_{\Gamma}\bigg(
\dashint_{2^{j+2}B_i\setminus 2^{j-1}B_i}
\big|z^{\frac{1}{2}}\nabla zL e^{-zL} b_i\big|^{q}dx\bigg)^{\frac{1}{q}}
\frac{r^2r_{B_i}^{2M}}{|z|^{\frac{1}{2}}(|z|+r^2)^{M+1}}\frac{|dz|}{|z|}.
\end{multline}
Split the integral in $x$ as follows
\begin{multline}\label{D1-2}
\bigg(
\dashint_{2^{j+2}B_i\setminus 2^{j-1}B_i}
\big|z^{\frac{1}{2}}\nabla zL e^{-zL} b_i\big|^{q}dx\bigg)^{\frac{1}{q}}
\\
\lesssim\sum_{l=1}^{j-3}
\bigg(
\dashint_{2^{j+2}B_i\setminus 2^{j-1}B_i}
\big|z^{\frac{1}{2}}\nabla zL e^{-\frac{z}{2}L}(\chi_{C_l(B_i)} e^{-\frac{z}{2}L}b_i)\big|^{q}dx\bigg)^{\frac{1}{q}}
\\
\quad +\sum_{l\geq j-2}
\bigg(
\dashint_{2^{j+2}B_i\setminus 2^{j-1}B_i}
\big|z^{\frac{1}{2}}\nabla zL e^{-\frac{z}{2}L}(\chi_{C_l(B_i)} e^{-\frac{z}{2}L}b_i)\big|^{q}dx\bigg)^{\frac{1}{q}}
=:\mathfrak{A}+\mathfrak{B}.
\end{multline}
Note now that since
$j\geq 4$, for $1\leq l\leq j-3$ we have that $d(2^{j+2}B_i\setminus 2^{j-1}B_i,C_l(B_i))\geq 2^{j-2}r_{B_i}\geq 2^{l+1}r_{B_i}$.
Then, in that case, first applying the fact that $\sqrt{\tau}\nabla \tau Le^{-\tau L}$ satisfies $L^{q_2}(\R^n)$- $L^{{q}}(\R^n)$ off-diagonal estimates and  split the exponential term (recall that $l\leq j-3$). Next apply Lemma \ref{lemma:sin-con} since $w\in A_{\frac{p_1}{q_2}}$ (see \eqref{p_1choice}) to get
\begin{align*}
\mathfrak{A}&\lesssim |2^{j+1}B_i|^{-\frac{1}{q}}
\sum_{l=1}^{j-3}
\bigg(
\int_{2^{j+2}B_i\setminus 2^{j-1}B_i}
\big|z^{\frac{1}{2}}\nabla zL e^{-\frac{z}{2}L}(\chi_{C_l(B_i)} e^{-\frac{z}{2}L}b_i)\big|^{q}dx\bigg)^{\frac{1}{q}}
\\&
\lesssim |2^{j+1}B_i|^{-\frac{1}{q}}
\sum_{l=1}^{j-3}
\bigg(
\int_{C_l(B_i)}
|e^{-\frac{z}{2}L}b_i|^{q_2}dx\bigg)^{\frac{1}{q_2}}
e^{-c\frac{4^jr_{B_i}^2}{|z|}}
|z|^{-\frac{n}{2}(\frac{1}{q_2}-\frac{1}{q})}
\\&
\lesssim|2^{j+1}B_i|^{-\frac{1}{q}}
\sum_{l=1}^{j-3} |2^{l+1}B_i|^{\frac{1}{q_2}}
\bigg(
\dashint_{C_l(B_i)}
|e^{-\frac{z}{2}L}b_i|^{p_1}dw\bigg)^{\frac{1}{p_1}}
e^{-c\frac{4^jr_{B_i}^2}{|z|}}e^{-c\frac{4^lr_{B_i}^2}{|z|}}
|z|^{-\frac{n}{2}(\frac{1}{q_2}-\frac{1}{q})}.
\end{align*}
Now, since $e^{-\frac{z}{2}L}\in \mathcal{O}(L^{p_2}(w)-L^{p_1}(w))$ and by \eqref{CZ-ptildeexta}
\begin{multline*}
\bigg(
\dashint_{C_l(B_i)}
|e^{-\frac{z}{2}L}b_i|^{p_1}dw\bigg)^{\frac{1}{p_1}}
\lesssim 2^{l\widetilde{\theta_1}}
\Upsilon\bigg(\frac{2^{l+1}r_{B_i}}{|z|^{\frac{1}{2}}}\bigg)^{\widetilde{\theta_2}}\bigg(
\dashint_{B_i}
|b_i|^{p_2}dw\bigg)^{\frac{1}{p_2}}
\\
\lesssim \alpha r_{B_i} 2^{l\widetilde{\theta_1}}
\Upsilon\bigg(\frac{2^{l+1}r_{B_i}}{|z|^{\frac{1}{2}}}\bigg)^{\widetilde{\theta_2}}.
\end{multline*}
Hence,
\begin{align*}
\mathfrak{A}
\lesssim \alpha r_{B_i} |2^{j+1}B_i|^{-\frac{1}{q}}
\sum_{l=1}^{j-3}2^{l\widetilde{\theta_1}}
\Upsilon\bigg(\frac{2^{l+1}r_{B_i}}{|z|^{\frac{1}{2}}}\bigg)^{\widetilde{\theta_2}} |2^{l+1}B_i|^{\frac{1}{q_2}}
e^{-c\frac{4^jr_{B_i}^2}{|z|}}e^{-c\frac{4^lr_{B_i}^2}{|z|}}
|z|^{-\frac{n}{2}(\frac{1}{q_2}-\frac{1}{q})}.
\end{align*}

If we now consider $l\geq j-2$, in this case, we do not have distance between $2^{j+2}B_i\setminus 2^{j-1}B_i$ and $C_l(B_i)$, but we do have it between $C_l(B_i)$ and $B_i$. Indeed, since $l\geq j-2\geq 2$ (recall that $j\geq 4$), we have that $d(C_l(B_i),B_i)> 2^{l-1} r_{B_i}\geq 2^{j-3} r_{B_i}$. Hence, proceeding as in the above computation, we obtain
\begin{align*}
\mathfrak{B}
&\lesssim |2^{j+1}B_i|^{-\frac{1}{q}}\sum_{l\geq j-2}
\bigg(\int_{2^{j+2}B_i\setminus 2^{j-1}B_i}\big|z^{\frac{1}{2}}\nabla z Le^{-\frac{z}{2}L}\big(\chi_{C_l(B_i)}e^{-\frac{z}{2}L}b_i\big)\big|^{q}dy\bigg)^{\frac{1}{q}}
\\
&\lesssim |2^{j+1}B_i|^{-\frac{1}{q}}\sum_{l\geq j-2}
\bigg(\int_{C_l(B_i)}\big|e^{-\frac{z}{2}L}b_i(y)\big|^{q_2}dy\bigg)^{\frac{1}{q_2}}|z|^{-\frac{n}{2}(\frac{1}{q_2}-\frac{1}{q})}
\\
&\lesssim|2^{j+1}B_i|^{-\frac{1}{q}} \sum_{l\geq j-2}(2^{l} r_{B_i})^{\frac{n}{q_2}}
\bigg(\dashint_{C_l(B_i)}\big|e^{-\frac{z}{2}L}b_i(y)\big|^{p_1}dw\bigg)^{\frac{1}{p_1}}|z|^{-\frac{n}{2}(\frac{1}{q_2}-\frac{1}{q})}
\\
&
\lesssim |2^{j+1}B_i|^{-\frac{1}{q}}\sum_{l\geq j-2}2^{l\widetilde{\theta_1}}
(2^{l} r_{B_i})^{\frac{n}{q_2}}\Upsilon\bigg(\frac{2^{l+1}r_{B_i}}{|z|^{\frac{1}{2}}}\bigg)^{\widetilde{\theta_2}}e^{-c\frac{4^lr_{B_i}^2}{|z|}}\bigg(\dashint_{B_i}|b_i|^{p_2} dw\bigg)^{\frac{1}{p_2}}|z|^{-\frac{n}{2}(\frac{1}{q_2}-\frac{1}{q})}
\\
&
\lesssim \alpha r_{B_i}|2^{j+1}B_i|^{-\frac{1}{q}}\sum_{l\geq j-2}2^{l\widetilde{\theta_1}}
(2^{l} r_{B_i})^{\frac{n}{q_2}}\Upsilon\bigg(\frac{2^{l+1}r_{B_i}}{|z|^{\frac{1}{2}}}\bigg)^{\widetilde{\theta_2}}e^{-c\frac{4^lr_{B_i}^2}{|z|}}e^{-c\frac{4^jr_{B_i}^2}{|z|}}|z|^{-\frac{n}{2}(\frac{1}{q_2}-\frac{1}{q})}.
\end{align*}
Thus, from the estimates of $\mathfrak{A}$ and $\mathfrak{B}$, in view of \eqref{D1ij}, \eqref{D1-1}, and \eqref{D1-2}, we have
\begin{align}\label{D1-3}
D_1^{ij}
\lesssim& \alpha r_{B_i}|2^{j+1}B_i|^{-\frac{1}{q}}
w(2^{j+1}B_i)^{\frac{1}{p_2}}\sum_{l\geq 1}2^{l\widetilde{\theta_1}}
(2^{l} r_{B_i})^{\frac{n}{q_2}}
\\\nonumber&
\bigg(\int_0^{\infty}\bigg(\int_{\Gamma}\Upsilon\bigg(\frac{2^{l+1}r_{B_i}}{|z|^{\frac{1}{2}}}\bigg)^{\widetilde{\theta_2}}e^{-c\frac{4^lr_{B_i}^2}{|z|}}e^{-c\frac{4^jr_{B_i}^2}{|z|}}|z|^{-\frac{n}{2}(\frac{1}{q_2}-\frac{1}{q})}
\frac{|z|^{-\frac{1}{2}}r^2r_{B_i}^{2M}}{(|z|+r^2)^{M+1}}\frac{|dz|}{|z|}\bigg)^{2}\frac{dr}{r}
\bigg)^{\frac{1}{2}}.
\end{align}
%%%%%%%%%%%%%%%%%%%%%%%%%%%%%%%%%%%%%%%%%%%%%%%%%%%%%%%%%%%%%%%%%%%%%%%%%%%%%%%%%%%%%%%%%%%%%%%%%%%%%%%%%%%%%%%%%%%%%%%%%%%%%%%%%%%%%%%%%%%%%%%%%%%%%%%%%%%%%%%%%%%%%%%%%%%%%%
Doing the  change of variables $s$ into $4^jr_{B_i}^2/s^2$, we obtain
\begin{align}\label{D1-4}
&
\int_0^{\infty}\Upsilon\bigg(\frac{2^{l+1}r_{B_i}}{s^{\frac{1}{2}}}\bigg)^{\widetilde{\theta_2}}
e^{-c\frac{4^lr_{B_i}^2}{s}}
e^{-c\frac{4^jr_{B_i}^2}{s}}
s^{-\frac{n}{2}(\frac{1}{q_2}-\frac{1}{q})-\frac{1}{2}}\frac{r^2r_{B_i}^{2M}}{(s+r^2)^{M+1}}\frac{ds}{s}
\\\nonumber&\quad
\lesssim (2^jr_{B_i})^{-n\bigg(\frac{1}{q_2}-\frac{1}{q}\bigg)-1}
\int_0^{\infty}\Upsilon\bigg(\frac{2^ls}{2^j}\bigg)^{\widetilde{\theta_2}}
e^{-c\frac{4^{l}s^2}{4^j}}
e^{-s^2}
s^{n(\frac{1}{q_2}-\frac{1}{q})+1}\frac{r^2r_{B_i}^{2M}}{(4^jr_{B_i}^2/s^2+r^2)^{M+1}}\frac{ds}{s}.
\end{align}
%%%%%%%%%%%%%%%%%%%%%%%%%%%%%%%%%%%%%%%%%%%%%%%%%%%%%%%%%%%%%%%%%%%%%%%%%%%%%%%%%%%%%%%%%%%%%%%%%%%%%%%%%%%%%%%%%%%%%%%%%%%%%%%%%%%%%%%%%%%%%%%%%%%%%%%%%%%%%%%%%%%%%%%%%%%%%%
Besides, changing the variable $r$ into $2^j r_{B_i}r$, we have
\begin{multline}\label{D1-5}
\bigg(\int_0^{\infty}
\bigg(\int_{0}^{\infty}\Upsilon\bigg(\frac{2^{l}s}{2^{j}}\bigg)^{\widetilde{\theta_2}}s^{n(\frac{1}{q_2}-\frac{1}{q})+1}e^{-cs^2}e^{-c\frac{4^ls^2}{4^j}}
r^2\frac{ r_{B_i}^{2M}}{(4^{j} r_{B_i}^2/s^2+r^2)^{M+1}}\frac{ds}{s}\bigg)^{2}\frac{dr}{r}\bigg)^{\frac{1}{2}}
\\
\lesssim 2^{-j2M}
\bigg(\int_0^{\infty}
r^4\bigg(\int_{0}^{\infty}\Upsilon\bigg(\frac{2^{l}s}{2^{j}}\bigg)^{\widetilde{\theta_2}}
\frac{s^{n(\frac{1}{q_2}-\frac{1}{q})+1}e^{-cs^2}e^{-c\frac{4^ls^2}{4^j}}}{(1/s^2+r^2)^{M+1}}\frac{ds}{s}\bigg)^{2}\frac{dr}{r}\bigg)^{\frac{1}{2}}
.
\end{multline}
In order to bound the above integral, take   $\widetilde{M}=\frac{1}{2}(\widetilde{\theta_2}+\widetilde{\theta_1}+\frac{n}{q_2}+1)$ and $M\geq 2$ so that $2M+n(\frac{1}{q_2}-\frac{1}{q})-2\widetilde{M}-\widetilde{\theta_2}-1>0
$. Then, 
\begin{align*}
&\bigg(\int_0^{1}
r^4\bigg(\int_{0}^{\infty}\Upsilon\bigg(\frac{2^{l}s}{2^{j}}\bigg)^{\widetilde{\theta_2}}s^{n(\frac{1}{q_2}-\frac{1}{q})+1}e^{-cs^2}e^{-c\frac{4^ls^2}{4^j}}
\frac{1}{(1/s^2+r^2)^{M+1}}\frac{ds}{s}\bigg)^{2}\frac{dr}{r}\bigg)^{\frac{1}{2}}
\\&\quad
\lesssim 2^{-l(2\widetilde{M}-\widetilde{\theta_2})}2^{j(2\widetilde{M}+\widetilde{\theta_2})}\bigg(
\bigg(\int_0^{1}
r^4\bigg(\int_{0}^{1}s^{2M+3+n(\frac{1}{q_2}-\frac{1}{q})-2\widetilde{M}-\widetilde{\theta_2}}
\frac{ds}{s}\bigg)^{2}\frac{dr}{r}\bigg)^{\frac{1}{2}}\bigg.
\\& \qquad\qquad\qquad\qquad
\bigg.
+\bigg(\int_0^{1}
r^4\bigg(\int_{1}^{\infty}s^{2M+3+n(\frac{1}{q_2}-\frac{1}{q})-2\widetilde{M}+\widetilde{\theta_2}}e^{-cs^2}
\frac{ds}{s}\bigg)^{2}\frac{dr}{r}\bigg)^{\frac{1}{2}}\bigg)
\\&\quad
\lesssim 2^{-l(2\widetilde{M}-\widetilde{\theta_2})}2^{j(2\widetilde{M}+\widetilde{\theta_2})}.
\end{align*}
%
%
%%%%%%%%%%%%%%%%%%%%%%%%%%%%%%%%%%%%%%%%%%%%%%%%%%%%%%%%%%%%%%%%%%%%%%%%%%%%%%%%%%%%%%%%%%%%%%%%%%%%%%%%%%%%%%%%%%%%%%%%%%%%%%%%%%%%%%%%%%%%%%%%%%%%%%%%%%%%%%%%%%%%%%%%%%%%%%
And, since $=(s^{-2}+r^2)^{-M-1}=(s^{-2}+r^2)^{-M+1}(s^{-2}+r^2)^{-2}\leq s^{2(M-1)}r^{
-4}$,
\begin{align*}
&\bigg(\int_1^{\infty}
r^4\bigg(\int_{0}^{\infty}\Upsilon\bigg(\frac{2^{l}s}{2^{j}}\bigg)^{\widetilde{\theta_2}}s^{n(\frac{1}{q_2}-\frac{1}{q})+1}e^{-cs^2}e^{-c\frac{4^ls^2}{4^j}}
\frac{1}{(1/s^2+r^2)^{M+1}}\frac{ds}{s}\bigg)^{2}\frac{dr}{r}\bigg)^{\frac{1}{2}}
\\&\quad
\lesssim
\bigg(\int_1^{\infty}
r^{-4}\bigg(\int_{0}^{\infty}\Upsilon\bigg(\frac{2^{l}s}{2^{j}}\bigg)^{\widetilde{\theta_2}}s^{2M+n(\frac{1}{q_2}-\frac{1}{q})-1}e^{-cs^2}e^{-c\frac{4^ls^2}{4^j}}
\frac{ds}{s}\bigg)^{2}\frac{dr}{r}\bigg)^{\frac{1}{2}}
\\&\quad
\lesssim 2^{-l(2\widetilde{M}-\widetilde{\theta_2})}
2^{j(2\widetilde{M}+\widetilde{\theta_2})}.
\end{align*}
%
%
%%%%%%%%%%%%%%%%%%%%%%%%%%%%%%%%%%%%%%%%%%%%%%%%%%%%%%%%%%%%%%%%%%%%%%%%%%%%%%%%%%%%%%%%%%%%%%%%%%%%%%%%%%%%%%%%%%%%%%%%%%%%%%%%%%%%%%%%%%%%%%%%%%%%%%%%%%%%%%%%%%%%%%%%%%%%%%
Hence, by our choice of $M$ and $\widetilde{M}$
\begin{multline*}
D_1^{ij}
\lesssim
 \alpha
w(2^{j+1}B_i)^{\frac{1}{p_2}}
2^{-j(2M+1+\frac{n}{q_2}-2\widetilde{M}-\widetilde{\theta_2})}
\sum_{l\geq 1}
2^{-l(2\widetilde{M}-\widetilde{\theta_2}-\widetilde{\theta_1}-\frac{n}{q_2})}
\\
\lesssim
 \alpha
w(2^{j+1}B_i)^{\frac{1}{p_2}}
2^{-j(2M-\widetilde{\theta_1}-2\widetilde{\theta_2})}.
\end{multline*}

%%%%%%%%%%%%%%%%%%%%%%%%%%%%%%%%%%%%%%%%%%%%%%%%%%%%%%%%%%%%%%%%%%%%%%%%%%%%%%%%%%%%%%%%%%%%%%%%%%%%%%%%%%%%%%%%%%%%%%%%%%%%%%%%%%%%%%%%%%%%%%%%%%%%%%%%%%%%%%%%%%%%%%%%%%%%%%

%
\noindent\textbf{Estimate for $D_{2}^{ij}$:}  We claim that, for some  fixed constant $\widetilde{\theta}>0$ and for $M$ large enough the following holds
\begin{align}\label{D2ij}
D_2^{ij}
\lesssim 
2^{l\left(2M+\frac{n}{q_2}+\widetilde{\theta}\right)}\alpha w(2^{j+1}B_i)^{\frac{1}{p_2}} 2^{-2jM}.
\end{align}
For any $t\geq 2^{j-l-3}r_{B_i}$ and $f\in L^2(\R^n)$, we have that
$$
\widetilde{\mathrm{G}}_{\hh,t}f(x)=\left(\int_{\frac{t}{2}}^{\infty}|r\nabla rLe^{-r^2L}f(x)|^2\frac{dr}{r}\right)^{\frac{1}{2}}\leq 
\left(\int_{2^{j-l-4}r_{B_i}}^{\infty}|r\nabla rLe^{-r^2L}f(x)|^2\frac{dr}{r}\right)^{\frac{1}{2}}.
$$
Take $q_1$ such that $q<q_1<q_+(L)$, and recall that $q_0<p_2$. Consequently, we  get that $w\in RH_{\left(\frac{q}{p_2}\right)'}\subset RH_{\left(\frac{q_1}{p_2}\right)'}$ and $w\in RH_{\left(\frac{q}{p_2}\right)'}\subset RH_{\left(\frac{q}{q_0}\right)'}$. Hence, by
Lemma \ref{lemma:sin-con} we have that $\Mcal_{q_0}^wh\leq \Mcal_{q}h$ and  the third inequality below. 
Next, we use
the boundedness of $\Mcal_{q}$ on $L^{q_1}(\R^n)$, since $q_1>q$,
 and Minkowski's integral inequality  to obtain
\begin{align}\label{D2ijplug}
&D_2^{ij}
\lesssim
\bigg(
\int_{C_j(B_i)}
\mathcal{M}^w_{q_0}
\big({\widetilde{\mathrm{G}}_{\hh,2^{j-l-3}r_{B_i}}
(\,\widehat{b}_i\,)}\big)^{p_2}dw
\bigg)^{\frac{1}{p_2}}
\\\nonumber
&
\lesssim
\bigg(
\int_{C_j(B_i)}
\mathcal{M}_{q}
\big({\widetilde{\mathrm{G}}_{\hh,2^{j-l-3}r_{B_i}}
(\,\widehat{b}_i\,)}\big)^{p_2}dw
\bigg)^{\frac{1}{p_2}}
\\\nonumber
&
\lesssim w(2^{j+1}B_i)^{\frac{1}{p_2}}
\bigg(
\dashint_{C_j(B_i)}
\mathcal{M}_{q}
\big(\widetilde{\mathrm{G}}_{\hh,2^{j-l-3}r_{B_i}}
(\,\widehat{b}_i\,)\big)^{q_1}dx
\bigg)^{\frac{1}{q_1}}
\\\nonumber
&
\lesssim w(2^{j+1}B_i)^{\frac{1}{p_2}}|2^{j+1}B_i|^{-\frac{1}{q_1}}
\bigg(
\int_{\R^n}\bigg(
\int_{2^{j-l-4}r_{B_i}}^{\infty}
\big|
r\nabla rLe^{-r^2L} \widehat{b}_i
\big|^{2}\frac{dr}{r}\bigg)^{\frac{q_1}{2}}dx\bigg)^{\frac{1}{q_1}}
\\\nonumber
&
\lesssim
w(2^{j+1}B_i)^{\frac{1}{p_2}}|2^{j+1}B_i|^{-\frac{1}{q_1}}
\bigg(
\int_{\frac{2^{j-l-4}r_{B_i}}{\theta_M}}^{\infty}\bigg(\int_{\R^n}
\big|r\nabla rLe^{-r^2L}\mathcal{T}_{r,r_{B_i}}(\chi_{B_{i}^{l}} b_i)
\big|^{q_1}dx\bigg)^{\frac{2}{q_1}}\frac{dr}{r}\bigg)^{\frac{1}{2}}
,
\end{align}
where in the last inequality we have changed the variable $r$ into $r\theta_M:=r\sqrt{M+1}$, used that  $B_i\subset B(x_{B_i}, \theta_M 2^{l}r)=:B_{i}^{l}$, for $r>\frac{2^{j-l-4}r_{B_i}}{\theta_M}$ and $j\geq 4$ ($x_{B_i}$ denotes the center of $B_i$), and we recall that  
$\mathcal{T}_{r,r_{B_i}}:=(e^{-r^2L}-e^{-(r^2+r_{B_i}^2)L})^M$. 

By  the $L^{q_2}(\R^n)$-$L^{q_1}(\R^n)$ off-diagonal estimates of $\tau\nabla \tau^2L e^{-\tau^2L}$, and  
\eqref{boundednesstsr} (with $w\equiv 1$ and $p=q_2$), (recall the choice of $q_2$ in \eqref{p_1choice} and that $2<q_1<q_+(L)$)
\begin{align*}
&\bigg(\int_{\R^n}\big|r\nabla rLe^{-r^2L}\mathcal{T}_{r,r_{B_i}}\big(\chi_{B_{i}^{l}}b_i)\big|^{q_1}dx\bigg)^{\frac{1}{q_1}}
\\&\quad
\lesssim 
2^{l}r_{B_i}^{-1}r^{-n(\frac{1}{q_2}-\frac{1}{q_1})}
\bigg(\int_{\R^n}\big|\mathcal{T}_{r,r_{B_i}}e^{-\frac{r^2}{2}L}(\chi_{B_{i}^{l}}b_i)\big|^{q_2}dx\bigg)^{\frac{1}{q_2}}
\\&\quad
\lesssim 
2^{l}
r_{B_i}^{-1}
\bigg(\frac{r_{B_i}^2}{r^2}\bigg)^{M}r^{-n(\frac{1}{q_2}-\frac{1}{q_1})}
\bigg(\int_{\R^n}\big|e^{-\frac{r^2}{2}L}(\chi_{B_{i}^{l}}b_i)\big|^{q_2}dx\bigg)^{\frac{1}{q_2}}.
\end{align*}
Since $w\in A_{\frac{p_1}{q_2}}$ (see \eqref{p_1choice}), by Lemma \ref{lemma:sin-con}, and the $L^{p_2}(w)$-$L^{p_1}(w)$ off-diagonal estimates on balls satisfied by $e^{-\tau L}$,
we have that
\begin{align*}
\bigg(\int_{\R^n}\big|e^{-\frac{r^2}{2}L}(\chi_{B_{i}^{l}}b_i)\big|^{q_2}dx\bigg)^{\frac{1}{q_2}}
&\lesssim
\sum_{N\geq 1}|2^{N+1}B_{i}^{l}|^{\frac{1}{q_2}}
\bigg(\dashint_{C_N(B_{i}^{l})}\big|e^{-\frac{r^2}{2}L}({\chi_{B_{i}^{l}}b_i})\big|^{q_2}dx\bigg)^{\frac{1}{q_2}}
\\&
\lesssim
\sum_{N\geq 1}|2^{N+1}B_{i}^{l}|^{\frac{1}{q_2}}
\bigg(\dashint_{C_N(B_{i}^{l})}\big|{e^{-\frac{r^2}{2}L}(\chi_{B_{i}^{l}}b_i)}\big|^{p_1}dw\bigg)^{\frac{1}{p_1}}
\\&
\lesssim 2^{l\widetilde{\theta}}
|B_{i}^{l}|^{\frac{1}{q_2}}
\bigg(\dashint_{B_{i}^{l}}\abs{b_i}^{p_2}dw\bigg)^{\frac{1}{p_2}}
\\&
\lesssim \alpha r_{B_i}
2^{l\widetilde{\theta}}|B_i^{l}|^{\frac{1}{q_2}},
\end{align*}
where in the last inequality we have used that  
for $r>\frac{2^{j-l-4}r_{B_i}}{\theta_M}$ and $j\geq 4$,  $B_i\subset B_{i}^{l}$.
Plugging this into \eqref{D2ijplug} leads  to
\begin{align*}
D_2^{ij}
& \lesssim 
2^{l(\widetilde{\theta}+\frac{n}{q_2})}\alpha 
w(2^{j+1}B_i)^{\frac{1}{p_2}}|2^{j+1}B_i|^{-\frac{1}{q_1}}
\bigg(
\int_{\frac{2^{j-l-5}r_{B_i}}{\theta_M}}^{\infty}
\bigg(\frac{r_{B_i}^2}{r^2}\bigg)^{2M}r^{\frac{2n}{q_1}}
\frac{dr}{r}\bigg)^{\frac{1}{2}}
\\ &\lesssim 
2^{l\left(\widetilde{\theta}+\frac{n}{q_2}+M\right)}\alpha w(2^{j+1}B_i)^{\frac{1}{p_2}} 2^{-j2M},
\end{align*}
 provided $2M>\frac{n}{q_1}$.

Gather  \eqref{D1ijfinal} and \eqref{D2ij}, then for $M\in \N$ such that $2M>\max\{2\widetilde\theta_2+\widetilde{\theta}_1+2+n/q,n/q_1\}$,
\begin{align*}
I_{2,l}^{ij}\lesssim 2^{l\widetilde{\theta}}\alpha w(2^{j+1}B_i)^{\frac{1}{p_2}}2^{-j(2M-2\widetilde\theta_2-\widetilde{\theta}_1)}.
\end{align*}
This completes the proof of Claim \ref{claim} in the case $m=2$.
\end{proof}

\bibliographystyle{acm}

\begin{thebibliography}{10}

\parskip=5pt

\bibitem{Au07}
{\sc Auscher, P.}
\newblock On necessary and sufficient conditions for {$L^p$}-estimates of
  {R}iesz transforms associated to elliptic operators on {$\mathbb{R}^n$} and
  related estimates.
\newblock {\em Mem. Amer. Math. Soc. 186} (2007), no. 871.

\bibitem{ChMPA18}
{\sc Auscher, P., Chen, L., Martell, J.~M., and Prisuelos-Arribas, C.}
\newblock The regularity problem for degenerate elliptic operators in weighted spaces.
\newblock {\em Preprint} 2019.

%\bibitem{AC05}
%{\sc Auscher, P., and Coulhon, T.}
%\newblock Riesz transform on manifolds and {P}oincar{\'e} inequalities.
%\newblock {\em Ann. Sc. Norm. Super. Pisa Cl. Sci. (5) 4} (2005), no. 3, 531--555.

\bibitem{AE}
{\sc Auscher, P., and Egert, E.}
On uniqueness results for Dirichlet problems of elliptic systems without de Giorgi–Nash–Moser regularity. 
\textit{Anal. PDE 13} (2020), no.~6, 1605--1632.

\bibitem{AHLMT02}
{\sc Auscher, P., Hofmann, S., Lacey, M., McIntosh, A., and Tchamitchian, P.}
\newblock The solution of the {K}ato square root problem for second order
  elliptic operators on {$\mathbb{R}^n$}.
\newblock {\em Ann. of Math. (2) 156} (2002), no. 2,  633--654.

\bibitem{AMIII06}
{\sc Auscher, P., and Martell, J.~M.}
\newblock Weighted norm inequalities, off-diagonal estimates and elliptic
  operators. {III}. {H}armonic analysis of elliptic operators.
\newblock {\em J. Funct. Anal. 241} (2006), no. 2, 703--746.

\bibitem{AMI07}
{\sc Auscher, P., and Martell, J.~M.}
\newblock Weighted norm inequalities, off-diagonal estimates and elliptic
  operators. {I}. {G}eneral operator theory and weights.
\newblock {\em Adv. Math. 212} (2007), no. 1, 225--276.

\bibitem{AMII07}
{\sc Auscher, P., and Martell, J.~M.}
\newblock Weighted norm inequalities, off-diagonal estimates and elliptic
  operators. {II}. {O}ff-diagonal estimates on spaces of homogeneous type.
\newblock {\em J. Evol. Equ. 7} (2007), no. 2, 265--316.

\bibitem{AS16}
{\sc Auscher, P., and Stahlhut, S.}
\newblock Functional calculus for first order systems of {D}irac type and
  boundary value problems.
\newblock {\em M{\'e}m. Soc. Math. Fr. (N.S.)} No. 144 (2016).

\bibitem{Badr}
 Badr, N.
{\sc Real interpolation of Sobolev spaces}, {\em 
 Math. Scand. 105} (2009), no. 2, 235--264. 	
	

\bibitem{BuiCaoKyYangYang} 
{\sc Bui, T.A., Cao, J., Ky, L.D., Yang, D., and  Yang, S.}  Weighted Hardy spaces associated with operators satisfying reinforced off-diagonal estimates, 
{\em Taiwanese J. Math. 17} (2013), no. 4, 1127--1166.


\bibitem{BuiCaoKyYangYang:II}
{\sc Bui, T.A., Cao, J., Ky, L.D., Yang, D., and  Yang, S.}  Musielak-Orlicz-Hardy Spaces Associated with
Operators Satisfying Reinforced Off-Diagonal Estimates. {\em Anal. Geom. Metr. Spaces} 1 (2013), 69--129. 





\bibitem{CD03}
{\sc Coulhon, T., and Duong, X.~T.}
\newblock Riesz transform and related inequalities on noncompact {R}iemannian
  manifolds.
\newblock {\em Comm. Pure Appl. Math. 56} (2003), no. 12, 1728--1751.



\bibitem{CMR15}
{\sc Cruz-Uribe, D., Martell, J.~M., and Rios, C.}
\newblock On the Kato problem and extensions for degenerate elliptic operators.
\newblock {\em  Anal. PDE 11} (2018), no. 3, 609--660.

\bibitem{Du01}
{\sc Duoandikoetxea, J.}
\newblock {Fourier analysis}, {\em Graduate Studies in
  Mathematics, 29}. American Mathematical Society, Providence, RI, 2001.16.

\bibitem{GCRF85}
{\sc Garc{\'{\i}}a-Cuerva, J., and Rubio~de Francia, J.~L.}
\newblock {Weighted norm inequalities and related topics}, {\em
  North-Holland Mathematics Studies, 116}. North-Holland Publishing Co., Amsterdam, 1985.



\bibitem{GrafakosI}
{\sc Grafakos, L.}
\newblock {Classical {F}ourier analysis, second edition.}, {\em Graduate Texts in Mathematics, 249}. 
\newblock Springer, New York, 2008.

\bibitem{HKMPII15}
{\sc Hofmann, S., Kenig, C., Mayboroda, S., and Pipher, J.}
\newblock The regularity problem for second order elliptic operators with
  complex-valued bounded measurable coefficients.
\newblock {\em Math. Ann. 361} (2015), no 3-4 863--907.

\bibitem{HKMPI15}
{\sc Hofmann, S., Kenig, C., Mayboroda, S., and Pipher, J.}
\newblock Square function/non-tangential maximal function estimates and the
  {D}irichlet problem for non-symmetric elliptic operators.
\newblock {\em J. Amer. Math. Soc. 28} (2015), no. 2, 483--529.

\bibitem{HMay09}
{\sc Hofmann, S., and Mayboroda, S.}
\newblock Hardy and {BMO} spaces associated to divergence form elliptic
  operators. \newblock {\em Math. Ann. 344} (2009), no. 1, 37--116.

\bibitem{HMayMc11}
{\sc Hofmann, S., Mayboroda, S., and McIntosh, A.}
\newblock Second order elliptic operators with complex bounded measurable
  coefficients in {$L^p$}, {S}obolev and {H}ardy spaces.
\newblock {\em Ann. Sci. {\'E}c. Norm. Sup{\'e}r. (4) 44} (2011), no. 5,  723--800.

\bibitem{Ke94}
{\sc Kenig, C.~E.}
\newblock {Harmonic analysis techniques for second order elliptic boundary
  value problems}, {\em CBMS Regional Conference Series in
  Mathematics, 83}. \newblock Published for the Conference Board of the Mathematical Sciences,
  Washington, DC; by the American Mathematical Society, Providence, RI, 1994.

\bibitem{MaPAI17}
{\sc Martell, J.~M., and Prisuelos Arribas, C.}
\newblock Weighted {H}ardy spaces associated with elliptic operators. {P}art
  {I}: {W}eighted norm inequalities for conical square functions.
\newblock {\em Trans. Amer. Math. Soc. 369} (2017), no. 6, 4193--4233.

\bibitem{MaPAII17}
{\sc Martell, J.~M., and Prisuelos Arribas, C.}
\newblock Weighted {H}ardy spaces associated with elliptic operators. {P}art
  {II}: Characterizations of {$H^1_L(w)$}.
\newblock {\em Publ. Mat. 62} (2018), no. 2, 475--535.

\bibitem{May10}
{\sc Mayboroda, S.}
\newblock The connections between {D}irichlet, regularity and {N}eumann
  problems for second order elliptic operators with complex bounded measurable
  coefficients.
\newblock {\em Adv. Math. 225} (2010), no. 4, 1786--1819.

\bibitem{PA17}
{\sc Prisuelos Arribas, C.}
\newblock Weighted {H}ardy spaces associated with elliptic operators. {P}art
  {III}: Characterizations of {$H_L^{p}(w)$} and the weighted {H}ardy space
  associated with the {R}iesz transform.
\newblock  {\em  J. Geom. Anal} 29 (2018), no. 1, 451--509.


\bibitem{PA18}
{\sc Prisuelos Arribas, C.}
\newblock Vertical square functions and other operators associated with an elliptic operator.
\newblock{\em J. Funct. Anal.}  277 (2019), no. 12, 108296, 63 pp.

\end{thebibliography}

\end{document}